\newtheorem{definition}{Definition}
\newtheorem{theorem}[definition]{Theorem}
\newtheorem{lemma}[definition]{Lemma}
\newtheorem{coro}[definition]{Corollary}
\newtheorem{proposition}[definition]{Proposition}
\newtheorem{remark}[definition]{Remark}
\begin{document}
\title{Witt Group of Nondyadic Curves}
\author{Nanjun Yang}
\thanks{The author is partially supported by the National Natural Science Foundation of China (Grant No. 12201336). He would like to thank Piotr Achinger, Lei Fu, Joe Harris, Chuangqiang Hu, Bruno Kahn, Will Savin and Weizhe Zheng for answering my questions. He specially thanks the helpful suggestions given by referees.}
\address{Nanjun Yang\\Beijing Institute of Mathematical Sciences and Applications\\Huairou District\\Beijing China}
\email{ynj.t.g@126.com}
\subjclass[2020]{11E81,14G20,14F42}
\begin{abstract}
Witt group of real algebraic curves has been studied since Knebusch in 1970s. But few results are known if the base field is non-Archimedean except the hyperelliptic case by works of Parimala, Arason et al.. In this paper, we compute the derived Witt groups of smooth proper curves over nondyadic local fields with \(char\neq2\) by reduction, with a general study of the existence of Theta characteristics.
\end{abstract}
\maketitle
\tableofcontents
\section{Introduction}\label{introduction}
Denote by \(Sm/k\) the category of smooth separated schemes over a field \(k\), by \(H^i(-)=H^i_{\textrm{\'et}}(-,\mathbb{Z}/2)\) the mod 2 \'etale cohomologies and by \(\mathcal{H}^i\) its associated Zariski sheaf. A local field \(K\) is either a finite extension of \(\mathbb{Q}_p\) or being equal to \(\mathbb{F}_{p^n}((t))\). We say that \(K\) is nondyadic if \(p>2\).

The Witt group \(W(X)\) of a scheme \(X\) (\cite{Kn1}, \(2\in O_X^{\times}\)) is to consider the semiring (\(\oplus,\otimes\)) of the set of vector bundles \(E\) with nondegenerate symmetric inner product, modulo by (the ideal of) those with Lagrangians, namely a subbundle \(L\subseteq E\) such that \(L=L^{\perp}\). For a general field \(k (char(k)\neq2)\), the \(W(k)\) classifies anisotropic quadratic forms over \(k\). We have the following well-known facts (\cite{Lam})
\[W(k)=\begin{cases}\mathbb{Z}&k=\mathbb{R}\\\mathbb{Z}/2^{\oplus4}&k\textrm{ nondyadic local },\sqrt{-1}\in k\\\mathbb{Z}/4^{\oplus2}&k\textrm{ nondyadic local },\sqrt{-1}\notin k\end{cases}.\]
The ideal \(I(k)\) of even dimensional inner product spaces is called the fundamental ideal. The Milnor conjecture (\cite{V1},\cite{OVV}) says that we have an equality
\[I^n(k)/I^{n+1}(k)=K_n^M(k)/2=H^n(k)\]
where the \(K_*^M(k)\) is the Milnor K-theory of \(k\) (\cite{Mr}).

Later Balmer defined in \cite{Ba1} the Witt group \(W(K,\#,\bar{\omega})\) for any triangulated category \((K,\#,\bar{\omega})\) with \(\delta\)-duality (\(\delta=\pm1\)). Objects \(P\in K\) with symmetric isomorphism \(\varphi:P\cong P^{\#}\) are called symmetric objects, denoted by the pair \((P,\varphi)\). If there is an \(A\in K\) such that \(P\) is a mapping cone of some \(u:A\to A^{\#}[-1]\) being compatible with \(\bar{\omega}\), the \((P,\varphi)\) is called neutral or Lagrangian. Then define \(W(K,\#,\bar{\omega})\) to be the quotient of monoid of symmetric objects by neutral objects.

If we consider symmetric morphisms \(\varphi:P\to P^{\#}\) not being an isomorphism, the mapping cone of \(\varphi\) is a symmetric object in the sense of shifted \((-\delta)\)-duality structure \((K,\#,\omega)[1]=(K,[1]\circ\#,(-\delta)\bar{\omega})\). Then we can defined the derived Witt group
\[W^i(K,\#,\bar{\omega})=W((K,\#,\bar{\omega})[i])\]
which satisfies \(W^i(K,\#,\bar{\omega})=W^{i+4}(K,\#,\bar{\omega})\). For any regular scheme \(X\), we define \(W^i(X)=W^i(D^b(Vect(X)),*,\bar{\omega})\), taking \(K\) to be the bounded derived category of the category of vector bundles on \(X\). If \(i\equiv0 (\textrm{mod }4)\), the \(W^i(X)\) is equal to \(W(X)\) defined in \cite{Kn1} by the main theorem of \cite{Ba2}. 

An important tool to compute \(W^*(X)\) is the Gersten-Witt complex (\cite{BW})
\[W_X:0\to W(K(X))\to\oplus_{x\in X^{(1)}}W(k(x))\to\cdots\to\oplus_{x\in X^{(dim(X))}}W(k(x))\to0\]
defined in \cite{BW} by a coniveau filtration of \(D^b(Vect(X))\). It is known (\cite[Theorem 10.1]{BW}) that if \(dim(X)\leq 3\), we have \(H^i(W_X)=W^i(X),0\leq i\leq 3\)

On the other hand, given \(X\in Sm/k (char(k)\neq 2)\), we can define a Rost-Schmid complex (\cite{Mo})
\[C^n(X,\textbf{I}^m)=\oplus_{x\in X^{(n)}}I^{m-n}(k(x),(\omega_X|_x)^{\vee})\]
and cohomologies
\[H^n(X,\textbf{I}^m)=H^n(C^*(X,\textbf{I}^m)),\]
where differentials are given by cohomological transfers between twisted Witt groups. These cohomologies are the Nisnevich (or Zariski) cohomologies of the Nisnevich sheaf \(\textbf{I}^m\), by taking unramified elements in \(I^m(K(X))\). The same definition applies if we replace \(\textbf{I}^*\) by \(\textbf{K}_*^M/2\). We have an isomorphism \(C^*(X,\textbf{W})=W_X\) by \cite[Lemma 8.4]{BW} and \cite[Remarque 7.3.1]{F1}. Consequently we have the key equality:
\begin{equation}\label{key}W^i(X)=H^i(X,\textbf{W}),0\leq i\leq3\end{equation}
if \(dim(X)\leq3\).

For smooth projective curves \(X\) over an algebraically closed field \(k\), its (derived) Witt groups were computed in \cite{Zib}. When \(k\) is real closed, there has been intensive study on its Witt group (\cite{Kn},\cite{Mon}). Suppose \(X\) has genus \(g\) and \(n\) real closed connected components. We have the result in \cite{Mon}:
\[W(X)=\begin{cases}\mathbb{Z}^{\oplus n}\oplus\mathbb{Z}/2^{\oplus g}&X(k)\neq\emptyset\\\mathbb{Z}/4\oplus\mathbb{Z}/2^{\oplus g}&X(k)=\emptyset,X(k\sqrt{-1})\textrm{ connected}\\\mathbb{Z}/2^{\oplus 2g+1}&X(k)=\emptyset,X(k\sqrt{-1})\textrm{ not connected}\end{cases}.\]
When \(k\) is a local field, the \cite{AEJ} computed the cardinality of \(W(X)\) and the \cite{PS} aimed at the case of hyperelliptic curves with good reduction. The \cite{PSr} computed the filtration \(H^0(X,\textbf{I}^n)/H^0(X,\textbf{I}^{n+1})=\mathcal{H}^n(X)\) if the Theta charcteristics \(\sqrt{\omega_X}\) exists. None of these work computed the 4-torsions in \(W^*(X)\). The \cite{Sha} and \cite{PSc} discussed the existence of Theta characteristics when the curve is hyperelliptic or has degenerate semistable reduction (i.e. union of \(\mathbb{P}^1\)). The \cite{R} gave a motivic explaination of the existence of Theta characteristics, as well as its connection to the Steenrod square.
\section{Statement of the Results}
Suppose that \(X_K\) is a connected smooth proper curve over a nondyadic local field \(K\) with \(char(K)\neq2\) and residue field \(k\). One finds a model \(X\) which is regular and proper flat over \(O_K\) with generic fiber \(X_K\).  In this paper, we compute general \(W^*(X_K)\) by studying the reduction \(X_k=X\times_{O_K}k\).

The algorithm runs as the following: Denote by \(G=Ker(Pic^0_{X_k/k}\to Pic^0_{\widetilde{X_k^{red}}/k})\) and \(p:\widetilde{X_k^{red}}\to X_k\) the normalization map. We have an exact sequence
\[O^{\times}(\widetilde{X^{red}_k})/2\to\frac{\oplus_{x'\in p^{-1}((X_k)_{sing})}H^1(x')}{\oplus_{x\in (X_k)_{sing}}H^1(x)}\to H^1(k,{_2}G)\to0.\]
by Proposition \ref{picard} (\({_2}G\) is the \(2\)-torsion part of \(G\)). Define
\[G(X_k)=Im(\oplus_{x'\in p^{-1}((X_k)_{sing}),\sqrt{-1}\notin k(x')}H^1(x')\to H^1(k,{_2}G)\to H^1(k,{_2}Pic(X_k)))\]
(Definition \ref{G1}), where \((X_k)_{sing}\) are the singularities of \(X_k^{red}\). Furthermore we generalize the Merkurjev's pairing (Definition \ref{Merkurjev}) to singular \(X_k\) as the following
\[\begin{array}{ccccc}CH_0(X_k)/2&\times&H^1(X_k)&\to&H^1(k)=\mathbb{Z}/2\\x&,&f&\mapsto&Tr_{k(x)/k}(f|_{k(x)})\end{array}.\]
Define by that pairing
\[S(X_k)=Coker(CH_0(X_k)/2\xrightarrow{m}H^1(X_k)^{\vee}).\]
By Proposition \ref{unramified}, we have \(S(X_k)=\mathcal{H}^3(X_K)\) and the \(m\) is injective if \(X_k\) is reduced. 

Finally we define
\[tr(X_k)=\#\{Y\textrm{ irr. comp. of }\widetilde{X_k^{red}}|[O(Y):k]\textrm{ is odd}\}.\]
Then we have the followings in the context above (Theorem \ref{h1} and Theorem \ref{h0}):
\begin{theorem}
Suppose that \(K\) is a nondyadic local field with \(char(K)\neq 2\) and that \(X_K\) has a rational point and is connected. The \(4\)-torsion group \(W^i(X_K)\) (\(=0\) for \(i=2,3\)) satisfies (\(l\) is the length function)
\[l(W(X_K))=dim(S(X_k))+2dim(H^2(X_k))+q+1,\]
\[dim(2W(X_K))=\begin{cases}dim(G(X_k))+tr(X_k)+\delta+1&\sqrt{-1}\notin K\\0&\sqrt{-1}\in K\end{cases}.\]
\[l(W^1(X_K))=2dim(H^2(X_k))+q+1-dim(S(X_k))\]
\[dim(2W^1(X_K))=\begin{cases}q+tr(X_k)&\sqrt{-1}\notin K\\0&\sqrt{-1}\in K\end{cases}.\]
\end{theorem}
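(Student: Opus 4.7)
The plan is to exploit the fundamental ideal filtration \(\textbf{W}\supseteq\textbf{I}\supseteq\textbf{I}^2\supseteq\cdots\) on the Witt sheaf together with the Milnor conjecture, so that the computation of \(H^{*}(X_K,\textbf{W})\) reduces to manipulations of mod~\(2\) étale cohomology, which can then be pushed to the special fiber \(X_k\) via the regular model \(X/O_K\). Since \(X_K\) is a smooth curve over a nondyadic local field, the total étale cohomological dimension is at most \(3\), so \(\mathcal{H}^n\) vanishes for \(n\geq 4\) and, via the Milnor identifications \(\textbf{I}^n/\textbf{I}^{n+1}\cong\mathcal{H}^n\), the resulting filtration on both \(\textbf{W}(X_K)\) and \(H^1(X_K,\textbf{W})\) terminates after finitely many steps.

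First, I would derive from the short exact sequences \(0\to\textbf{I}^{n+1}\to\textbf{I}^n\to\mathcal{H}^n\to 0\) a finite filtration on each \(H^i(X_K,\textbf{W})\) whose graded pieces sit inside \(H^i(X_K,\mathcal{H}^n)\), and compute each such Zariski cohomology group using the Rost--Schmid/Bloch--Ogus complex. The relevant inputs are: \(H^0(X_K,\mathcal{H}^0)=\mathbb{F}_2\) from the rational point and connectedness, \(H^0(X_K,\mathcal{H}^1)\) controlled by \(\textrm{Pic}(X_K)/2\), \(H^0(X_K,\mathcal{H}^2)\) by \({_2}\textrm{Br}(X_K)\), and \(H^0(X_K,\mathcal{H}^3)=S(X_k)\) by the already-cited Proposition~\ref{unramified}; the \(H^1\)-row contributes in complementary degrees and accounts for the \(H^2(X_k)\)-term appearing in the length formulas.

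Next, I would transfer these groups to the special fiber through the regular model. Smooth-proper base change for \(\mathbb{Z}/2\)-coefficients matches \(H^{*}(X_K,\mathbb{Z}/2)\) with \(H^{*}(X_k,\mathbb{Z}/2)\) up to tame ramification, while a Kummer argument applied to the defining sequence \(0\to K\to\textrm{Pic}^0(X_k)\to\textrm{Pic}^0(\widetilde{X_k^{\textrm{red}}})\to 0\) yields precisely the group \(G(X_k)\), and the irreducible-component bookkeeping produces the integer \(tr(X_k)\). The factor \(q\) encodes a choice of Theta characteristic: its presence or absence determines whether one has the canonical isomorphism used to untwist the Rost--Schmid complex by \(\omega_X^{\vee}\), and hence whether the top graded piece contributes a copy of \(\mathbb{F}_2\) or vanishes.

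The main obstacle I anticipate is the passage from \(\mathbb{F}_2\)-dimension to integer length, which accounts for the separate formulas for \(\dim(2\textbf{W}(X_K))\) and \(\dim(2H^1(X_K,\textbf{W}))\). When \(\sqrt{-1}\notin K\), the residue Witt groups of closed points of \(X_K\) genuinely contain \(\mathbb{Z}/4\) summands, so the sequences \(0\to\textbf{I}^{n+1}\to\textbf{I}^n\to\mathcal{H}^n\to 0\) need not split and the graded pieces assemble into authentic \(4\)-torsion elements of \(\textbf{W}(X_K)\); when \(\sqrt{-1}\in K\) everything is \(2\)-primary and the corresponding term vanishes, matching the stated case distinction. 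Controlling these extensions requires a careful analysis of the connecting maps, pinning down their images by using the rational point as a basepoint, and quantifying the residual ambiguity through the correction term \(\delta\) of Definition~\ref{delta}; this is the step in which all the geometric hypotheses on the reduction, in particular the combinatorics encoded by \(tr(X_k)\) and \(G(X_k)\), must be used to full strength.
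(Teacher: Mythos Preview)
Your outline is on the right track in broad terms---the \(\textbf{I}^n\)-filtration on \(\textbf{W}\) is indeed what organizes the computation, and morally this is the same filtration the paper uses, since multiplication by \(\eta\) on \(H^{*,*}_W\) corresponds to the inclusions \(\textbf{I}^{n+1}\hookrightarrow\textbf{I}^n\). But the paper does not simply inspect the long exact sequences attached to \(0\to\textbf{I}^{n+1}\to\textbf{I}^n\to\mathcal{H}^n\to 0\); it packages them into the \(\eta\)-Bockstein spectral sequence of \cite{FY}, whose \(E_1\)-page is \(H^{p,q}_M\oplus H^{p+2,q}_M\) with \(d_1\) given explicitly by Steenrod operations (essentially \(Sq^2\) modulo \(\tau\)). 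Two consequences of this identification are the actual engine of the proof and are absent from your sketch. First, the parameter \(q\) is not merely ``whether the top piece contributes'': it is \emph{precisely} \(\dim\operatorname{Coker}(Sq^2\colon H^{2,2}_M\to H^{4,3}_M)\), and Proposition~\ref{sq} shows this \(Sq^2\) is cup-product with \(\omega_{X_K}\), whence \(q=1\Leftrightarrow\Theta(X_K)\neq\emptyset\). Second, the paper proves (Theorem~\ref{h1}(1)) that the Bockstein spectral sequence degenerates at \(E_2\), i.e.\ \(H^{*,*}_W(X_K)\) has only \(\eta\)-torsion; this is what makes the extension problem finite and tractable, and your proposal has no substitute for it.

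The other substantive gap is in how you plan to extract the \(4\)-torsion. Saying ``analyze the connecting maps'' is not a method. The paper's device is to work with the cone of \(\rho=[-1]\): one introduces the mod-\(\rho\) theories \(H^{*,*}_{M/\rho}\) and \(H^{*,*}_{W/\rho}\) (Lemma~\ref{modrho} and the triangles following it), and the exact sequences \(0\to H_W/\rho\to H_{W/\rho}\to{}_{\rho}H_W\to 0\) convert the question ``how much \(4\)-torsion?'' into a dimension count over \(K(\sqrt{-1})\). It is in this comparison that \(G(X_k)\), \(tr(X_k)\) (via \(\rho\,\mathrm{Pic}(X_K)/2=\oplus_i Tr(Y_i)\)), and the correction \(\delta\) (via Lemma~\ref{dsq}, comparing \(q\) with its analogue \(q'\) over \(K(\sqrt{-1})\)) actually enter. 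None of this is visible from the raw connecting homomorphisms of the \(\textbf{I}^n\)-sequences.

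Finally, a factual correction: ``smooth-proper base change matches \(H^*(X_K)\) with \(H^*(X_k)\)'' is not right---the special fiber can be singular and the dimensions do not simply agree. What the paper does (Proposition~\ref{etale}) is use proper base change to identify \(H^*(X)=H^*(X_k)\) for the regular model \(X\), then absolute purity (Gysin sequences for \(X_K\subset X\supset X_k\)) to relate \(H^*(X_K)\) to \(H^*(X_k)\) and \(H^*(\widetilde{X_k^{red}})\); the outcome is the exact sequence \(0\to H^1(X_k)\to H^1(X_K)\to H^0(\widetilde{X_k^{red}})\to 0\) and \(\dim H^2(X_K)=2\dim\mathrm{Pic}(X_k)/2\), not an identification. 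Also, \(H^0(X_K,\mathcal{H}^1)\) is \(H^1(X_K)\) (unramified \(H^1\) for a curve), not \(\mathrm{Pic}(X_K)/2\); the latter sits in \(H^{2,1}_M\).
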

Here the \(q=1\) (resp. \(q'=1\)) if the Theta characteristics \(\sqrt{\omega_{X_K}}\) (resp. \(\sqrt{\omega_{X_{K\sqrt{-1}}}}\)) exists, otherwise \(q=0\) (resp. \(q'=0\)). On the other hand, define
\[\delta=\begin{cases}0&q'=0\\0&q'=1, \omega_{X_k}\in G(X_k)\\1&\textrm{else}\end{cases}.\]

The main difficulty of the computation is the identification of the \(4\)-torsion part of \(W(X_K)\) and that of \(q,\delta\). By \eqref{key} it suffices to compute \(H^*(X_K,\textbf{W})\). We have the motivic Bockstein spectral sequence (\S\ref{bockstein})
\[E_1^{p,q}=H^{p,q}(X_K,\mathbb{Z}/2)\oplus H^{p+2,q}(X_K,\mathbb{Z}/2),\]
strongly converging to \(H^{p-q}(X_K,\textbf{W})\). Its differentials on \(E_1\)-page (resp. \(E_n\)-page, \(n\geq2\)) are linear combinations of \(Sq^2\) and \(Sq^1\) (resp. higher Bocksteins). For our \(X_K\), higher Bocksteins vanish. By analyzing the Bockstein SS, we can identify the \(4\)-torsion part.

For \(q,\delta\) we aim at concrete computation (the \(q'\) is easier). We note that one can use Hurwitz's formula and some map \(X_K\to\mathbb{P}^1\) to compute the multiplicites of \(\omega_{X_K}\) at each point as a Weil divisor. If all multiplicities are even, the \(\omega_{X_K}\) is a square. But the converse is not true.

The existence of \(\sqrt{\omega_{X_K}}\) is reduced to that of \(\sqrt{\omega_{X_k}}\) and is equivalent to the vanishing of \(Sq^2\) on \(H^{2,2}_M\) (Proposition \ref{sq}). The \(\omega_{X_{\bar{k}}}\) is a square if and only if the self-intersections of components of \(X_{\bar{k}}\) are even (Proposition \ref{ac}). Denote by
\[R=Im(H^0(k,{_2}Pic(\widetilde{X_k^{red}}))\to H^1(k,{_2}G))\]
which can be computed by the Tate pairing (Proposition \ref{semisimple}). So we have
\[dim(H^2(X_k))=dim(Pic(\widetilde{X_k^{red}})/2)+dim(H^1(k,{_2}G)/R).\]
The \(p^*\omega_{X_k}\) is a square if and only if \(\omega_{X_{\bar{k}}}\) is a square and the theta-form (\cite[1.12]{Harr}) vanishes on \(R\) (Proposition \ref{sq1}). In this case we write
\[p^*\omega_{X_k}=2M+div(f)\]
as Cartier divisors with supports contained in \((X_k)_{reg}\). The \((f(x))_{p(x)\in(X_k)_{sing}}\) defines a class \(\Lambda(\omega_{X_k})\) in \(H^1(k,{_2G})/R\) (Definition \ref{lambda}), which is zero if and only if \(\omega_{X_K}\) is a square.

The Corollary \ref{algorithm} discusses the computation of \(\Lambda(\omega_{X_k})\) when each component of \(\widetilde{X_k^{red}}\) is either a hyperelliptic curve or \(\mathbb{P}^1\).

A complete computation of \(W(X_K)\) when \(X_K\) is an elliptic curve is given in \S\ref{elliptic}.

For convenience, we give a list of frequently used notations in this paper:
\[\begin{array}{c|c}
Sm/k						&\textrm{The category of smooth and separated schemes over \(k\)}\\
H^*(X)					&\textrm{\'Etale cohomology with mod 2 coefficients of \(X\)}\\
\chi(X,2)				&\textrm{The Euler Poincar\'e characteristic of }H^*(X)\\
H^*(x)					&\textrm{\'Etale cohomology with mod 2 coefficients of the residue field of }x\\
H^{*,*}_{M}			&\textrm{Motivic cohomology with mod 2 coefficients}\\
H^{*,*}_{W}			&\textrm{Witt motivic cohomology}\\
\mathcal{H}^*(X)	&H^0(X,H^*(-)_{Zar})\\
X\sqrt{-1}				&X\times_kk\sqrt{-1}\\
X_{sing}				&\textrm{Singular points of \(X^{red}\)}\\
X_K						&\textrm{The Generic fiber}\\
X_k						&\textrm{The Special fiber}\\
\tau						&\textrm{The motivic Bott element}\\
\eta						&\textrm{The Hopf element}\\
\rho						&[-1]\in H^1(k)\\
A/\rho,A/\eta			&\textrm{Mapping cone of }A\wedge\rho,A\wedge\eta\\
_aM						&\textrm{The \(a\)-torsions of }M\\
\sigma					&\textrm{The Frobenius action on }k
\end{array}\]
\section{The Motivic Stable Homotopy Category}\label{SH}
The main references of this section are \cite{Mo1}, \cite{B} and \cite{DLORV}. Let \(sShv_{\bullet}(Sm/k)\) be the category of pointed simplicial sheaves over \(Sm/k\) for the Nisnevich topology. We localize it by the morphisms
\[F\wedge\mathbb{A}^1_+\longrightarrow F\]
for every \(F\in sShv_{\bullet}(Sm/k)\), obtaining the homotopy category \(\mathcal{H}_{\bullet}(k)\).

Define \(\mathcal{SH}(k)\) to be the homotopy category of the \(\mathbb{P}^1\) or \(\mathbb{G}_m-S^1\) spectra of \(sShv_{\bullet}(Sm/k)\) (see \cite[2.3]{DLORV}). It is triangulated and symmetric monoidal with respect to the smash product \(\wedge\). There is an adjunction
\[\Sigma^{\infty}:\mathcal{H}_{\bullet}(k)\rightleftharpoons\mathcal{SH}(k):\Omega^{\infty}.\]
In \(\mathcal{SH}(k)\) we have spheres \(S^{n,0}=S^n\) and \(S^{n,n}=\mathbb{G}_m^{\wedge n}\). They are invertible objects with resepct to the smash product. We define \(S^{p,q},p,q\in\mathbb{Z}\) by the rule \(S^{p,q}\wedge S^{r,s}=S^{p+r,q+s}\).

Now we look at the morphism
\[\begin{array}{ccc}\mathbb{A}^2\setminus 0&\longrightarrow&\mathbb{P}^1\\(x,y)&\longmapsto&[x:y]\end{array}.\]
It induces a morphism
\[\mathbb{G}_m\wedge\mathbb{G}_m\wedge S^1\longrightarrow\mathbb{G}_m\wedge S^1,\]
which is the suspension of a (unique) morphism \(\eta\in Hom_{\mathcal{SH}(k)}(S^{1,1},S^{0,0})\), called the Hopf map.

For any \(E\in\mathcal{SH}(k)\) and \(n,m\in\mathbb{Z}\), define \(\pi_n(E)_m\) to be the (Nisnevich) sheafification of the presheaf on \(Sm/k\)
\[X\longmapsto[\Sigma^{\infty}X_+\wedge S^n,E(m)[m]]_{\mathcal{SH}(k)}.\]
Then a morphism \(E_1\longrightarrow E_2\) between spectra is a weak equivalence if and only if it induces an isomorphism
\[\pi_n(E_1)_m\cong\pi_n(E_2)_m\]
for every \(n,m\in\mathbb{Z}\).

Define
\[\mathcal{SH}_{\leq -1}=\{E\in\mathcal{SH}(k)|\pi_n(E)_m=0, \forall n\geq 0,m\in\mathbb{Z}\},\]
\[\mathcal{SH}_{\geq 0}=\{E\in\mathcal{SH}(k)|\pi_n(E)_m=0, \forall n<0,m\in\mathbb{Z}\}.\]
They give a \(t\)-structure (see \cite[5.2]{Mo1}) on \(\mathcal{SH}(k)\) where \(\mathcal{SH}_{\geq 0}\) can be described as the smallest full subcategory of \(\mathcal{SH}(k)\) being stable under suspension, extensions and direct sums, containing \(\Sigma^{\infty}X_+\wedge\mathbb{G}^{\wedge i}_m\) for every \(X\in Sm/k\) and \(i\in\mathbb{Z}\) (see \cite[Proposition 2.1.70]{Ay}). Its heart \(\mathcal{SH}^{\heartsuit}\) is equivalent to the category of homotopy modules (see \cite[Definition 5.2.4]{Mo1}), where the equivalence is given by
\[E\longmapsto\pi_0(E)_*.\]
Examples of homotopy modules are the Nisnevich sheaves \(\textbf{I}^*, \textbf{K}^M_*/2\) defined by \(0^{th}\) cohomology of Rost-Schmid complexes (\cite{Mo}, \S\ref{introduction}), where sections are defined by exact sequences (\(X\in Sm/k\))
\[0\to\textbf{I}^*(X)\to I^n(K(X))\to\oplus_{x\in X^{(1)}}I^n(k(x),\omega_x^{\vee})\]
\[0\to\textbf{K}^M_*/2(X)\to K^M_*/2(K(X))\to\oplus_{x\in X^{(1)}}K^M_*/2(k(x)).\]
Here the twist \(\omega_x^{\vee}\) makes the differential maps independent of the choices of the uniformizer of \(O_{X,x}\).

Define \(\mathcal{SH}^{eff}(k)\) to be smallest triangulated full subcategory on \(\mathcal{SH}(k)\) containing \(\Sigma^{\infty}X_+\) for every \(X\in Sm/k\). The functor from \(\mathcal{SH}^{eff}(k)\)
\[E\longmapsto\pi_*(E)_0\]
is conservative (see \cite[Proposition 4]{B}). Define
\[\mathcal{SH}^{eff}_{\leq_e-1}=\{E\in\mathcal{SH}^{eff}(k)|\pi_n(E)_0=0, \forall n\geq 0\},\]
\[\mathcal{SH}^{eff}_{\geq_e0}=\{E\in\mathcal{SH}^{eff}(k)|\pi_n(E)_0=0, \forall n<0\}.\]
They give a \(t\)-structure (see \cite[\S 3]{B}) of \(\mathcal{SH}^{eff}(k)\) where \(\mathcal{SH}^{eff}_{\geq_e0}\) is the smallest full subcategory of \(\mathcal{SH}^{eff}(k)\) being stable under suspension, extensions, direct sums and containing \(\Sigma^{\infty}X_+\) for every \(X\in Sm/k\). The functor \(\mathcal{SH}^{eff,\heartsuit}\longrightarrow Ab(Sm/k)\) sending \(E\) to \(\pi_0(E)_0\) is conservative (see \cite[Proposition 5]{B}). We further define
\[\mathcal{SH}^{eff}(k)(n)=\mathcal{SH}^{eff}(k)\wedge\mathbb{G}_m^{\wedge n}\]
for any \(n\in\mathbb{Z}\), which has a \(t\)-structure obtained by shifting that of \(\mathcal{SH}^{eff}(k)\) by \(\mathbb{G}_m^{\wedge n}\).

There is an adjunction
\[i_n:\mathcal{SH}^{eff}(k)(n)\rightleftharpoons\mathcal{SH}(k):r_n\]
by \cite[Theorem 4.1]{N} where \(i_n\) is the inclusion. The functor \(r_n\) is \(t\)-exact and \(i_n\) is right \(t\)-exact. Moreover, we have a functor
\[i^{\heartsuit}:\mathcal{SH}(k)^{eff,\heartsuit}\longrightarrow\mathcal{SH}(k)^{\heartsuit},\]
whose essential image consists of effective homotopy modules. Define
\[f_n=i_n\circ r_n.\]
We have natural isomorphisms \(r_n\circ i_n\cong Id\) and \(f_{n+1}\circ f_n\cong f_{n+1}\), where the latter induces a natural transformation \(f_{n+1}\longrightarrow f_n\) (see \cite[\S 4]{DLORV}). We define \(s_n(E)\) for any \(E\in\mathcal{SH}(k)\) by the functorial distinguished triangle
\[f_{n+1}(E)\longrightarrow f_n(E)\longrightarrow s_n(E)\longrightarrow f_{n+1}(E)[1].\]

Then we define the spectra of motivic cohomologies (\cite[pp. 1134]{B}):
\begin{definition}
\[H_W\mathbb{Z}=f_0(\textbf{I}^*), H_{\mu}\mathbb{Z}/2=f_0(\textbf{K}_*^M/2).\]
\end{definition}
Applying \(Hom_{\mathcal{SH}(k)}(\Sigma^{\infty}X_+,-\wedge S^{*,*})\) to \(H_{\mu}\mathbb{Z}/2, H_W\mathbb{Z}\) gives \(H^{*,*}_M, H^{*,*}_W\), respectively. These two spectra can be related by the following distinguished triangle (\cite[Lemma 20]{B}):
\begin{proposition}\label{eta}
\[H_W\mathbb{Z}\wedge\mathbb{G}_m\xrightarrow{\eta}H_W\mathbb{Z}\to H_{\mu}\mathbb{Z}/2\oplus H_{\mu}\mathbb{Z}/2[2]\to H_W\mathbb{Z}\wedge\mathbb{P}^1.\]
\end{proposition}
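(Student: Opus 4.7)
The approach is to derive the triangle from the Milnor-conjecture short exact sequence of homotopy modules
\[0\to\textbf{I}^{*+1}\to\textbf{I}^*\to\textbf{K}^M_*/2\to0\]
combined with Morel's identification of $\eta$-multiplication on a homotopy module $M_*$ with its structure map $M_{*+1}\to M_*$; in particular, the leftmost arrow above is exactly $\eta$ acting on $\textbf{I}^*$.

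First I would lift this short exact sequence, interpreted as an exact triangle in $\mathcal{SH}(k)^{\heartsuit}$ via the embedding $i^{\heartsuit}$, to a distinguished triangle of spectra. Since $H_W\mathbb{Z}=f_0(\textbf{I}^*)$ and $H_{\mu}\mathbb{Z}/2=f_0(\textbf{K}^M_*/2)$ by definition, this produces a distinguished triangle
\[H_W\mathbb{Z}\wedge\mathbb{G}_m\xrightarrow{\eta}H_W\mathbb{Z}\to C\to H_W\mathbb{Z}\wedge\mathbb{G}_m[1]\]
with $\pi_0(C)_*=\textbf{K}^M_*/2$. The last object agrees with $H_W\mathbb{Z}\wedge\mathbb{P}^1$ via the identification $\mathbb{P}^1=\mathbb{G}_m\wedge S^1$, so matching the stated triangle reduces to exhibiting $C\simeq H_{\mu}\mathbb{Z}/2\oplus H_{\mu}\mathbb{Z}/2[2]$.

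The main obstacle is to establish this splitting rather than merely an extension. My plan is to produce two explicit maps: the projection $C\to H_{\mu}\mathbb{Z}/2$ induced by the quotient $\textbf{I}^*\twoheadrightarrow\textbf{K}^M_*/2$, and a secondary map $H_{\mu}\mathbb{Z}/2[2]\to C$ arising from a Bockstein-type operation on the two-step filtration $\textbf{I}^{*+2}\subset\textbf{I}^*$, exploiting that $\eta$ acts as zero on any $H_{\mu}\mathbb{Z}/2$-module. Once these maps are shown to induce isomorphisms on all homotopy sheaves in the relevant bidegrees, conservativity of $\pi_*$ on $\mathcal{SH}^{eff}$ upgrades them to an equivalence. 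The most delicate computational point is verifying that the correction term produced by the non-exactness of $f_0$ on the defining sequence contributes precisely a copy of $H_{\mu}\mathbb{Z}/2$ shifted by $[2]$; I would handle this by running the slice spectral sequence for $H_W\mathbb{Z}/\eta$ in low weights and checking that the two summands are separated in a way that forces the $\textrm{Ext}^1$ obstruction to vanish.
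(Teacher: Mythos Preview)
The paper does not prove this proposition at all: it is quoted verbatim from Bachmann \cite[Lemma~20]{B} with no argument given. So there is no ``paper's own proof'' to compare against; your task here was really only to supply a reference. If you want to sketch Bachmann's argument, note that he works entirely inside his generalized-slice formalism: the identification of the cofibre of $\eta$ on $H_W\mathbb{Z}$ with $H_\mu\mathbb{Z}/2\oplus H_\mu\mathbb{Z}/2[2]$ is extracted from his structural analysis of $\tilde{f}_0$ and $\tilde{s}_0$ applied to $\textbf{K}^W_*$ and $\textbf{I}^*$, together with the identification $\tilde{s}_0(\textbf{K}^M_*)\simeq H_\mu\mathbb{Z}$ and the Milnor-conjecture short exact sequence.

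Your outline starts in the right place---the exact sequence $0\to\textbf{I}^{*+1}\to\textbf{I}^*\to\textbf{K}^M_*/2\to0$ and Morel's interpretation of $\eta$ are exactly the inputs Bachmann uses---but the passage from ``$\pi_0(C)_*=\textbf{K}^M_*/2$'' to ``$C\simeq H_\mu\mathbb{Z}/2\oplus H_\mu\mathbb{Z}/2[2]$'' is where the entire content lies, and your description of it is too vague to count as a proof. In particular: you need to know not just $\pi_0(C)_*$ but all $\pi_i(C)_*$, and you need a reason why the only nonzero contributions sit in degrees $0$ and $-2$ with value $\textbf{K}^M_*/2$; this comes from the fact that $r_0$ is $t$-exact while $i_0$ is only right $t$-exact, and the defect is governed by $\tilde{s}_{-1}$ of the relevant homotopy modules. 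Your proposed ``Bockstein-type'' map $H_\mu\mathbb{Z}/2[2]\to C$ and the claim that the $\mathrm{Ext}^1$ obstruction vanishes are asserted rather than argued; Bachmann handles the splitting by showing that $H_\mu\mathbb{Z}/2$ admits a canonical orientation making the relevant extension split, which is a nontrivial computation you have not reproduced. As written, your proposal is a correct identification of the ingredients but not yet a proof.
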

\section{Motivic Cohomology and Bockstein Spectral Sequence}\label{bockstein}
The norm residue theorem in \cite[Theorem 1.1]{GL} states that for any \(X\in Sm/k, char(k)\neq 2\), the motivic cohomologies \(H^{p,q}_M(X,\mathbb{Z}/2)\), (abbr. \(H^{p,q}_M(X)\), see \cite[Definition 3.4]{MVW} for a definition of motivic cohomologies \(H^{p,q}_M(X,A)\) with coefficients in \(A\)) is equal to
\[\mathbb{H}^p_{Zar}(X,\tau_{\leq q}R\epsilon_*(\mathbb{Z}/2)),\]
where \(\epsilon:X_{\textrm{\'et}}\to X_{Zar}\) is the natural functor and \(\tau\) is the truncation. This implies that \(H^{p,q}_M=H^p\) if \(p\leq q\). Over a field \(k, char\neq 2\), there is a ring isomorphism (\cite{V1})
\[H^{*,*}_M(k)=H^*(k)[\tau]\]
where \(H^n(k)=H^{n,n}_M(k)\) and \(H^{0,1}_M(k)=\mathbb{Z}/2\cdot\tau\), spanned by the motivic Bott element \(\tau\). Moreover, we have the following facts (\cite[Lecture 3, Lecture 19]{MVW})
\[H^{p,q}_M(X)=\begin{cases}CH^q(X)/2&p=2q\\0&p>q+dim(X)\textrm{ or }p>2q\end{cases}\]

For \(X\in Sm/k\), we have Steenrod operations \(Sq^i\) defined on \(H^{*,*}_M(X)\). Here it suffices to know when \(i=1,2\). The \(Sq^1\) is the Bockstein map induced by the exact sequence
\[0\to\mathbb{Z}\xrightarrow{2}\mathbb{Z}\to\mathbb{Z}/2\to0,\]
namely the composite
\[H^{*,*}_M(X)\to H^{*+1,*}_M(X,\mathbb{Z})\to H^{*+1,*}_M(X).\]
The \(Sq^1\) satisfies the Leibnitz rule and \(\rho:=Sq^1(\tau)=-1\in k^*/2=H^{1,1}_M(k)\). The \(Sq^2:H^{*,*}_M\to H^{*+2,*+1}_M\) can be described by the following properties (\cite{V}):
\[Sq^2(xy)=xSq^2(y)+Sq^2(x)y+\tau Sq^1(x)Sq^1(y)\]
\[Sq^2(x)=\begin{cases}x^2&x\in H^{2,1}_M=Pic/2\\0&x\in H^{1,1}_M=H^1\end{cases}.\] 

In \cite[2.2]{FY}, we established the Bockstein spectral sequence (abbr. BSS) of the exact couple for general \(X\in Sm/k\)
\[
	\xymatrix
	{
		\oplus_{p,q}H^{p,q}_W(X)\ar[rr]^{\eta}	&																				&\oplus_{p,q}H^{p,q}_W(X)\ar[ld]_{\pi}\\
																						&\oplus_{p,q}E(W)_1^{p,q}(X)\ar[lu]^{\partial}	&
	},
\]
with
\[E(W)_1^{p,q}(X)=H^{p,q}_{M}(X)\oplus H^{p+2,q}_{M}(X)\]
and differential maps on \(E_1\) explained in \cite[Lemma 2.6]{FY}:
\[\begin{array}{ccc}H^{p,q}_M\oplus H^{p+2,q}_M&\to&H^{p+2,q+1}_M\oplus H^{p+4,q+1}_M\\(x,y)&\mapsto&(Sq^2(x)+\tau y,Sq^1Sq^2Sq^1(x)+Sq^2(y)+\rho Sq^1(y))\end{array},\]
whereas for \(E_2\) and higher pages, the differentials are higher Bocksteins. Here the \(\eta\) is the Hopf element (\cite[Definition 4.1]{Y1}) with degree \((-1,-1)\) and
\[H_W^{*,*}(X)=Hom_{\mathcal{SH}(k)}(\Sigma^{\infty}X_+,H_W\mathbb{Z}\wedge S^{*,*})\]
is the Witt motivic cohomology defined by Backmann (\cite[pp. 1134]{B}), where \(\mathcal{SH}(k)\) is the motivic stable homotopy category over \(k\) (\cite{Mo1}) and \(S^{p,q}=\mathbb{G}_m^{\wedge q}\wedge S^{p-q}\). The \(E_1\)-page is precisely \(Hom_{\mathcal{SH}(k)}(\Sigma^{\infty}X_+,H_W\mathbb{Z}/\eta\wedge S^{*,*})\) and differentials are given by the boundary maps \(H_W\mathbb{Z}/\eta\to H_W\mathbb{Z}/\eta\wedge\mathbb{P}^1\) and the identification of \(H_W\mathbb{Z}/\eta\) given in Proposition \ref{eta}. The readers refer to \S\ref{SH} for the definition of \(\mathcal{SH}(k)\) and \(H_W\mathbb{Z}\). Here it suffices to know that
\[H^{p,q}_W(X)=H^{p-q}(X,\textbf{I}^q)\]
if \(p\geq 2q-1\) by \cite[Proposition 4.5]{Y1} (similarly, \(H^{p,q}_M(X)=H^{p-q}(X,\textbf{K}_q^M/2), p\geq 2q-1\)). From this we have
\[H_W^{p,q}(X)=H^{p-q}(X,\textbf{W})\]
if \(p>2q\) and
\[H^{0,0}_W(X)=H^0(X,\textbf{W}).\]
If \(cd_2(k)<\infty\) the spectral sequence strongly converges to \(H^{p-q}(X,\textbf{W})\) (Proposition \ref{upper} and Proposition \ref{finite}).

Let us focus on the case when \(dim(X)=1\). By \ref{key} we have \(H^i(X,\textbf{W})=W^i(X),i=0\sim3\). By the norm residue theorem, we can identify the motivic cohomologies \(H^{*,*}_M(X),p\in\mathbb{Z},q\in\mathbb{N}\) as following :
\[H^{p,q}_M(X)=\begin{cases}H^p(X)\cdot\tau^{q-p}&p\leq q\\0&p>q+1\end{cases}.\]
Otherwise there is an exact sequence
\[0\to H^{q+1,q}_M(X)\to H^{q+1}(X)\to\mathcal{H}^{q+1}(X)\to0.\]
So the \(\tau\) is a non-zerodivisor on \(H^{*,*}_M\). By \cite[Proposition 2.7]{FY}, we have
\[E(W)_2^{p,q}=\frac{Ker(Sq^2) (\textrm{mod }\tau)}{Im(Sq^2) (\textrm{mod }\tau)}.\]

We say that the BSS has up to \(\eta^r\)-torsions (\cite[Definition 2.2]{FY}) if \(Ker(\eta^r)=Ker(\eta^{r+1})\) in \(H^{*,*}_W(X)\).
\begin{lemma}
The BSS has up to \(\eta\)-torsions if and only if \(Ker(\pi\circ\partial)=Ker(\partial)\).
\end{lemma}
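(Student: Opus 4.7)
The plan is to unpack both conditions using the long exact sequence underlying the Bockstein exact couple. Applying $Hom_{\mathcal{SH}(k)}(\Sigma^{\infty}X_+,-\wedge S^{*,*})$ to the distinguished triangle $H_W\mathbb{Z}\wedge\mathbb{G}_m\xrightarrow{\eta}H_W\mathbb{Z}\to H_{\mu}\mathbb{Z}/2\oplus H_{\mu}\mathbb{Z}/2[2]$ recalled earlier produces the maps $\eta,\pi,\partial$ of the couple together with the exactness relations $Im(\eta)=Ker(\pi)$, $Im(\pi)=Ker(\partial)$ and $Im(\partial)=Ker(\eta)$. In particular one has the compositions $\pi\circ\eta=0$ and $\eta\circ\partial=0$, and these two vanishings will drive the entire argument.

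For the direction ($\Leftarrow$), I would take $x\in Ker(\eta^2)$ and try to upgrade it to $\eta x=0$. Since $\eta x\in Ker(\eta)=Im(\partial)$, write $\eta x=\partial(e)$; then $\pi\partial(e)=\pi\eta(x)=0$ puts $e$ in $Ker(\pi\circ\partial)$, and the hypothesis $Ker(\pi\circ\partial)=Ker(\partial)$ forces $\partial(e)=0$, i.e.\ $\eta x=0$. For ($\Rightarrow$), run the same chase with the roles swapped: take $e\in Ker(\pi\circ\partial)$, so that $\partial(e)\in Ker(\pi)=Im(\eta)$, and write $\partial(e)=\eta(y)$; then $0=\eta\partial(e)=\eta^2(y)$ places $y$ in $Ker(\eta^2)=Ker(\eta)$, whence $\partial(e)=\eta(y)=0$. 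The inclusion $Ker(\eta)\subseteq Ker(\eta^2)$ (respectively $Ker(\partial)\subseteq Ker(\pi\circ\partial)$) is trivial, so only one inclusion in each direction actually needs proving.

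Both implications are really the same three-term diagram chase applied to the couple, and the substance lies entirely in the exactness inherited from the distinguished triangle. I do not see a genuine obstacle: the only care required is in matching bidegrees, which is automatic because $\eta$, $\pi$ and $\partial$ have fixed bidegrees so each chase stays in a consistent graded piece. No feature of $X$, of the one-dimensional assumption, nor of the specific motivic or Witt cohomology groups beyond the formal exact-couple structure enters the proof.
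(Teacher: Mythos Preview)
Your proof is correct and follows essentially the same diagram chase as the paper's own argument: both directions use the exactness relations $Im(\partial)=Ker(\eta)$ and $Im(\eta)=Ker(\pi)$ together with the vanishings $\pi\eta=0$ and $\eta\partial=0$ in exactly the way you describe. Your added remarks on the origin of the exact couple and on bidegree bookkeeping are accurate but not needed for the argument.
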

\begin{proof}
If \(Ker(\eta)=Ker(\eta^2)\) and \(\pi\partial(x)=0\), there is a \(y\) such that \(\partial(x)=\eta y\). So \(\eta\partial(x)=\eta^2y=0\) by exact couple hence \(\eta y=0=\partial(x)\) by the condition. Conversely, if \(Ker(\pi\circ\partial)=Ker(\partial)\) and \(\eta^2x=0\), there is a \(y\) such that \(\eta x=\partial(y)\). So \(\pi(\eta x)=\pi\partial(y)=0\) by exact couple hence \(\partial(y)=0=\eta x\) by the condition.
\end{proof}

We denote by \(\beta^r:E(W)_r^{p,q}\to E(W)_r^{p+r+1,q+r}\) the \(r^{th}\) Bocksteins.
\begin{proposition}\label{upper}
If \(cd_2(k)=r<\infty\) and \(X\in Sm/k\), the BSS of \(X\) is strongly convergent. Moreover, the (BSS of) \(X\) has up to \(\eta^{r+d_X+1}\)-torsions.
\end{proposition}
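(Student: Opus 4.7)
The main tools are the long exact sequence
\[\cdots \to E(W)_1^{p - 1, q} \to H^{p + 1, q + 1}_W(X, \mathbb{Z}) \xrightarrow{\eta} H^{p, q}_W(X, \mathbb{Z}) \to E(W)_1^{p, q} \to H^{p + 2, q + 1}_W(X, \mathbb{Z}) \to \cdots\]
induced by the cofiber triangle $H_W\mathbb{Z} \wedge \mathbb{G}_m \xrightarrow{\eta} H_W\mathbb{Z} \to H_{\mu}\mathbb{Z}/2 \oplus H_{\mu}\mathbb{Z}/2[2]$, together with the stable-range identifications $H^{p, q}_W(X, \mathbb{Z}) = H^{p - q}(X, \textbf{I}^q)$ and $H^{p, q}_M(X, \mathbb{Z}/2) = H^{p - q}(X, \textbf{K}^M_q/2)$ valid for $p \geq 2q - 1$.

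The decisive input is the vanishing $H^m(X, \textbf{I}^n) = 0 = H^m(X, \textbf{K}^M_n/2)$ for every $m$ as soon as $n \geq r + d_X + 1$. In this range the Rost--Schmid complex vanishes term by term: at a point $x \in X^{(m)}$ the residue field $k(x)$ has transcendence degree $d_X - m$ over $k$, so $cd_2(k(x)) \leq r + d_X - m$, and the Milnor conjecture together with the Arason--Pfister Hauptsatz forces $\textbf{I}^{n - m}(k(x)) = 0$ once $n - m > cd_2(k(x))$; the same argument applies to $\textbf{K}^M_{n - m}/2(k(x)) = H^{n - m}(k(x))$. Via the stable-range identifications this gives $H^{p, q}_W(X, \mathbb{Z}) = 0$ and $E(W)_1^{p, q} = 0$ whenever $q \geq r + d_X + 1$. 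Combined with the standard vanishings $H^{p, q}_M = 0$ for $q < 0$ or $p > q + d_X$, the $E(W)_1$-page is supported in the bounded region $-2 \leq p \leq q + d_X$, $0 \leq q \leq r + d_X$, so in each bidegree only finitely many differentials $\beta^r$ can be nonzero; this forces the spectral sequence to degenerate at a finite page at every $(p, q)$, yielding strong convergence.

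For the $\eta^{r + d_X + 1}$-torsion bound, compare $\ker(\eta^N \colon H^{a, b}_W \to H^{a - N, b - N}_W)$ at $N = r + d_X + 1$ and $N = r + d_X + 2$. The cofiber of $\eta^N$ carries a Postnikov filtration with $N$ graded pieces, each a bigraded shift of $\mathrm{Cof}(\eta) = H_{\mu}\mathbb{Z}/2 \oplus H_{\mu}\mathbb{Z}/2[2]$ (built inductively from the octahedral axiom applied to $\eta^N = \eta \circ \eta^{N - 1}$), so through the associated long exact sequence $\ker(\eta^N)$ at $(a, b)$ is the image of a group assembled from motivic cohomology classes in weights $b - N, b - N + 1, \ldots, b - 1$. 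The new weight introduced on going from $N = r + d_X + 1$ to $N = r + d_X + 2$ is $b - r - d_X - 2$: if $b \leq r + d_X + 1$, this weight is strictly negative so $H^{*, < 0}_M(X, \mathbb{Z}/2) = 0$; if $b \geq r + d_X + 2$, then $E(W)_1^{a - 2, b - 1} = 0$ by the vanishing above and the long exact sequence makes $\eta \colon H^{a, b}_W \to H^{a - 1, b - 1}_W$ injective, so $\ker(\eta^N) = 0$ for every $N$. Either way $\ker(\eta^{r + d_X + 1}) = \ker(\eta^{r + d_X + 2})$ at $(a, b)$. The main technical point is to rigorously set up the Postnikov tower of $\mathrm{Cof}(\eta^N)$ and track which weight appears at which stage, which is routine but bookkeeping-heavy.
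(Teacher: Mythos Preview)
Your argument has a genuine gap at the step where you claim the stable-range vanishing
\[H^m(X,\textbf{I}^n)=0=H^m(X,\textbf{K}^M_n/2)\qquad (n\geq r+d_X+1)\]
yields $H^{p,q}_W(X,\mathbb{Z})=0$ and $E(W)_1^{p,q}=0$ for all $p$ once $q\geq r+d_X+1$. The identifications $H^{p,q}_W=H^{p-q}(X,\textbf{I}^q)$ and $H^{p,q}_M=H^{p-q}(X,\textbf{K}^M_q/2)$ are only valid for $p\geq 2q-1$; outside that range they simply fail. In particular, for $p\leq q$ one has $H^{p,q}_M(X,\mathbb{Z}/2)=H^p(X)\cdot\tau^{q-p}$, which is typically nonzero (e.g.\ $H^0(X)=\mathbb{Z}/2$). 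Hence the $E(W)_1$-page is \emph{not} supported in the bounded strip $0\leq q\leq r+d_X$, and your argument for strong convergence via a bounded first page does not go through. The same error propagates into your torsion bound: in the case $b\geq r+d_X+2$ you invoke $E(W)_1^{a-2,b-1}=0$ to deduce that $\eta$ is injective on $H^{a,b}_W$, but that group need not vanish, and in fact $\eta$ is not injective there.

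The paper's approach avoids this by working at the level of homotopy sheaves rather than Rost--Schmid cohomology groups. Using the Postnikov spectral sequence $H^n(X,H_W^{p,q})\Rightarrow H_W^{n+p,q}(X,\mathbb{Z})$ together with Bachmann's identification of $H_W^{p,q}$ with $H_{M/2}^{p,q}$ for $p\neq q$, the vanishing $\textbf{I}^q=0$ as a sheaf for $q>r+d_X$ forces $H^{p,q}_W(X,\mathbb{Z})=H^{p,q}_M(X,\mathbb{Z}/2)$ for every $p$ once $q\geq r+d_X$. Since $\eta$ acts as zero on $H_{\mu}\mathbb{Z}/2$, this gives $\eta a=0$ (not $\eta$ injective) for any $a$ in weight $q>r+d_X$, so $Ker(\eta^N)=Ker(\eta)$ trivially in that range. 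For $a$ in weight $q\leq r+d_X$ one uses effectivity of $H_W\mathbb{Z}$ to get $\eta^{r+d_X+1}a=0$ directly, since its weight is negative. Strong convergence then follows from the torsion bound, not the other way around.
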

\begin{proof}
We consider the filtration \(F^qH^i(X,\textbf{W})=Im(H^{q+i,q}_W(X)\to H^i(X,\textbf{W}))\) by multiplying \(\eta^r\) for large \(r\). Denote by \(H_W^{p,q}\) the sheafification of the functor \(X\longmapsto H_W^{p,q}(X)\). We have a Postnikov spectral sequence (\cite[Remark 4.3.9]{Mo1})
\[H^n(X,H_W^{p,q})\Longrightarrow H^{n+p,q}_W(X).\]
By \cite[Proposition 5.1]{AF}, \(H_W^{q,q}=\textbf{I}^q=0\) if \(q>r+d_X\). So \(H_W^{q,q}=H_{M/2}^{q,q}\) if \(q\geq r+d_X\). If \(p\neq q\), \(H_W^{p,q}=H_M^{p,q}\) by \cite[Theorem 17]{B} so we have
\[H^{p,q}_W(X)=H^{p,q}_M(X)\]
if \(q\geq r+d_X\). Hence for any \(a\in H^{p,q}_W(X)\) satisfying \(\eta^{r+d_X+2}a=0\), if \(q\leq r+d_X\) we have \(\eta^{r+d_X+1}a=0\) by degree reason. If \(q>r+d_X\), we have \(\eta a=0\) by the equality above. So the statement follows.
\end{proof}
\begin{proposition}\label{finite}
If \(cd_2(k)<\infty\), for any \(X\in Sm/k\), the (BSS of) \(X\) has up to \(\eta^r\)-torsions if and only if the BSS degenerates on \(E^{r+1}\)-page.
\end{proposition}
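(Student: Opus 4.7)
The plan is to generalize the preceding Lemma (which is the case $r=1$) to every page of the Bockstein spectral sequence by making the higher differentials $\beta^s$ explicit from the derived exact couple, and then to close the converse direction by iterating to produce pure $\eta$-torsion of arbitrarily large order, contradicting Proposition \ref{upper}.

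I would first record a small bootstrap: if $\ker(\eta^r)=\ker(\eta^{r+1})$ in $H^{*,*}_W(X,\mathbb{Z})$ then $\ker(\eta^r)=\ker(\eta^s)$ for every $s\geq r$. Indeed, given $\eta^{s+1}a=0$, set $b=\eta a$; then $\eta^s b=0$, by the inductive hypothesis $\eta^r b=\eta^{r+1}a=0$, and the standing hypothesis gives $\eta^r a=0$. I would then unpack the derived couple of the exact couple recalled in \S\ref{bockstein}, which identifies
\[E(W)_s=Z_s/B_s,\ Z_s=\partial^{-1}\!\bigl(\eta^{s-1}H^{*,*}_W(X,\mathbb{Z})\bigr),\ B_s=\pi(\ker\eta^{s-1}),\]
with the $s$-th differential $\beta^s([x])=[\pi(y)]$ for any $y$ satisfying $\partial x=\eta^{s-1}y$. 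Since $\eta\partial=0$, any such $y$ lies in $\ker(\eta^s)$, and $\beta^s([x])=0$ if and only if $y$ can be chosen in $\ker(\eta^{s-1})+\eta\cdot H^{*,*}_W(X,\mathbb{Z})$; for $s=1$ this recovers the preceding Lemma.

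The forward implication is now immediate: if $\ker(\eta^r)=\ker(\eta^{r+1})$, the bootstrap gives $\ker(\eta^s)=\ker(\eta^{s-1})$ for every $s\geq r+1$, so any $y\in\ker(\eta^s)$ already lies in $\ker(\eta^{s-1})$ and $\beta^s$ is identically zero, yielding degeneration on $E(W)_{r+1}$.

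For the converse I would argue by contradiction. Assuming degeneration on $E(W)_{r+1}$, pick $y_0\in\ker(\eta^{r+1})\setminus\ker(\eta^r)$; since $\eta^r y_0\in\ker(\eta)=\mathrm{Im}(\partial)$, there is $x_0\in E(W)_1$ with $\partial x_0=\eta^r y_0$, so $x_0\in Z_{r+1}$. The vanishing of $\beta^{r+1}([x_0])$, together with the description above, produces $y_0=y_0'+\eta b_0$ with $\eta^r y_0'=0$; a direct computation then shows $\eta^{r+1}b_0=\eta^r y_0\neq 0$ and $\eta^{r+2}b_0=\eta^{r+1}y_0=0$, so $b_0\in\ker(\eta^{r+2})\setminus\ker(\eta^{r+1})$. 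Iterating the same procedure with $\beta^{r+2},\beta^{r+3},\dots$ manufactures elements $b_k\in\ker(\eta^{r+k+2})\setminus\ker(\eta^{r+k+1})$ for every $k\geq 0$, contradicting the bounded-torsion statement of Proposition \ref{upper}. The main delicacy is the clean identification of $\beta^s$ from the derived couple; once that is in place both implications are formal.
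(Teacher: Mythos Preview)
Your proof is correct and takes essentially the same route as the paper. Both directions rest on the explicit description of $\beta^s$ via the derived couple together with Proposition~\ref{upper}; the paper cites \cite[Proposition 5]{FY} for the forward implication and runs the converse as a descending induction on $r$, while you unroll the same recursion into a contradiction, but the core mechanism---decomposing $y=z+\eta w$ with $\eta^{s-1}z=0$ from the vanishing of $\beta^s$---is identical.
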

\begin{proof}
The necessity comes from \cite[Proposition 2.3]{FY}. For the sufficiency, we prove by descending induction on \(r\). If \(r\) is large enough, apply Proposition \ref{upper}. Suppose \(\eta^{r+1}(y)=0\) so \(\eta^r(y)=\partial(x)\) for some \(x\in E(W)_1^{*,*}\), \(\pi(y)=\pi(z)\) for some \(\eta^r(z)=0\). So \(y=z+\eta w\) thus \(\eta^{r+2}(w)=0\). By induction hypothesis, we have \(\eta^{r+1}(w)=0\) so \(\eta^r(y)=0\).
\end{proof}
\section{The Special Fiber}
Suppose that \(X\) is a proper curve over a perfect field \(k, char(k)\neq 2\) with \(H^0(X,O_X)=k\) and the normalization \(p:\widetilde{X^{red}}\to X\).
\begin{definition}
We define a reduced proper curve \(Y\) whose underlying space is the same as \(X\) and structure sheaf satisfies
\[
	\xymatrix
	{
		O_Y\ar[r]\ar[d]							&p_*O_{\widetilde{X^{red}}}\ar[d]\\
		\oplus_{x\in X_{sing}}k(x)\ar[r]&\oplus_{y\in p^{-1}(X_{sing})}k(y)
	},
\]
where \(X_{sing}\) is the set of singular points of \(X^{red}\). So the \(p\) decomposes as \(\widetilde{X^{red}}\xrightarrow{p_1}Y\xrightarrow{p_2}X\) where \(p_2\) is both a finite morphism and homeomorphism and \(Y\) has ordinary multiple points (\cite[Definition 5.13]{Liu}, \cite[pp. 247]{BLR}).
\end{definition}
\begin{definition}
Define \'etale sheaves
\[\begin{array}{cc}G=Ker(Pic^0_{X/k}\to Pic^0_{\widetilde{X^{red}}/k})&F=Coker(\mathbb{Z}/2\to p_*\mathbb{Z}/2).\end{array}\]
\end{definition}
\begin{definition}
Define
\[R=Im({_2}Pic(\widetilde{X^{red}})\xrightarrow{\partial} H^1(k,{_2}G)).\]
\end{definition}
\begin{remark}\label{Lang}
There is an exact sequence
\[0\to R\to H^1(k,{_2}G)\to H^1(k,{_2}Pic^0(X))\to H^1(k,{_2}Pic^0(\widetilde{X^{red}})).\]
By Lang's theorem \cite[Theorem 20.3]{St} we have \(H^1(k,Pic^0_{X/k})=0\). So by Kummer sequence we have
\[H^1(k,{_2}Pic(X))=Pic^0(X)/2,\]
which gives
\[H^1(k,{_2}G)/R=Ker(Pic^0(X)/2\to Pic^0(\widetilde{X^{red}})/2).\]
\end{remark}
\begin{remark}\label{div}
The \(G\) is a smooth connected linear algebraic group by \cite[Corollary 11, \S 9.2]{BLR}. So it is isomorphic to \(U\times T\) where \(U\) is unipotent and \(T\) is a torus by \cite[Theorem 2, \S 9.2]{BLR}. So \(T\) is \(2\)-divisible \'etale locally and \(U\) is uniquely \(2\)-divisible (\cite[Proposition 15.23]{M1}). Consequently \(G\) is \(2\)-divisible \'etale locally.
\end{remark}
\begin{lemma}\label{nil}
We have
\[H^*(X)=H^*(X^{red})\]
\[\begin{array}{cc}{_2}Pic(X)={_2}Pic(Y)={_2}Pic(X^{red})&Pic(X)/2=Pic(Y)/2=Pic(X^{red})/2.\end{array}\]
\end{lemma}
\begin{proof}
For any scheme \(X\), the \(X\) and \(X^{red}\) are homeomorphic hence they have the same \(\mathbb{Z}/2\). So the first equality follows. By \cite[Proposition 5 and Proposition 9, \S 9.2]{BLR}, the kernel of \(\pi:Pic_{X/k}\to Pic_{X^{red}/k}\) and \(\pi':Pic_{X^{red}/k}\to Pic_{Y/k}\) are unipotent, hence uniquely \(2\)-divisible. Apply \(H^*(k,\pi), H^*(k,\pi')\) (Remark \ref{Lang}) we get the last two equalities.
\end{proof}
\begin{proposition}\label{finite field}
We have
\[\begin{array}{cc}H^2(X)=Pic(X)/2&H^3(X)=H^3(\widetilde{X^{red}})=H^0(\widetilde{X^{red}})^{\vee}\end{array}\]
\[dim(H^1(X))=dim(Pic(X)/2)-dim(H^0(\widetilde{X^{red}}))+dim(H^0(X)).\]
\end{proposition}
\begin{proof}
By Lemma \ref{nil} we may suppose \(X\) is reduced. We have an exact sequence
\[H^i(\tilde{X})\to H^i(X,F)\to H^{i+1}(X)\to H^{i+1}(\tilde{X})\to H^{i+1}(X,F).\]
Suppose \(x_1,\cdots,x_r\in X\) are singular points of \(X\). We have
\[H^i(X,F)=\oplus_{j=1}^rH^i(k(x_j),F|_{x_j})\]
and a long exact sequence
\begin{equation}\label{C}H^i(k(x_j))\to H^i(p^{-1}(x_j))\to H^i(k(x_j),F|_{x_j})\to H^{i+1}(k(x_j))\end{equation}
by localization and proper base change. So by duality (\cite[Corollary 2.3, V]{M}) and additivity we have (\(\chi(-,2)=\sum_i(-1)^idim(H^i(-))\))
\[\chi(D,2)=\sum_j\chi(p^{-1}(x_j),2)-\chi(k(x_j),2)=0\]
\[0=\chi(\tilde{X},2)=\chi(X,2)+\chi(F,2)=\chi(X,2).\]
From this we obtain
\[dim(H^2(X))-dim(H^1(X))=dim(H^3(X))-dim(H^0(X)).\]
On the other hand, by duality we have
\[H^3(X)=H^3(\tilde{X})=H^0(\tilde{X})^{\vee}\]
and by \cite[7.2]{CTS} and \cite[2.15]{G} we have
\[Br(\tilde{X})=Br(X)=Br(k(x_j))=0.\]
So we get equalities desired.
\end{proof}
\begin{remark}
By Proposition \ref{finite field} we get
\[dim(H^2(X_k))=dim(Pic(\widetilde{X^{red}})/2)+dim(H^1(k,{_2}G)/R).\]
\end{remark}
\begin{proposition}\label{picard}
Denote by \(Z(x)=Ker(H^1(x)\to\oplus_{p(y)=x}H^1(y))\) for \(x\in X\). There are exact sequences
\[H^0(\widetilde{X^{red}})\to\frac{\oplus_{y\in p^{-1}(X_{sing})}H^0(y)}{\oplus_{x\in X_{sing}}H^0(x)}\to H^0(k,{_2}G)\to\oplus_{x\in X_{sing}}Z(x)\to0\]
\[O^{\times}(\widetilde{X^{red}})/2\to\frac{\oplus_{x'\in p^{-1}(X_{sing})}H^1(x')}{\oplus_{x\in X_{sing}}H^1(x)}\to H^1(k,{_2}G)\to0.\]
\end{proposition}
\begin{proof}
Denote by \(f\) (resp. \(\tilde{f}\)) the structure map of \(X\) (resp. \(\widetilde{X^{red}}\)) to \(Spec(k)\). By \cite[pp. 203]{BLR}, we have
\[\begin{array}{cc}Pic_{X/k}=R^1f_*\mathbb{G}_m&Pic_{\widetilde{X^{red}}/k}=R^1\tilde{f}_*\mathbb{G}_m\end{array}\]
as \'etale sheaves. By same statements as in \cite[2.10]{G} we have
\[\begin{array}{cc}H^n(X,\mathbb{G}_m)=H^{n-1}(k,Pic_{X/k})&H^n(\widetilde{X^{red}},\mathbb{G}_m)=H^{n-1}(k,Pic_{\widetilde{X^{red}}/k}),n\geq1\end{array}.\]
By Remark \ref{div} we may suppose \(X\) is reduced. In this case, we have \(R^2f_*\mathbb{Z}/2=\mathbb{Z}/2\) and \(R^2f_*\mathbb{G}_m\)=0 by \cite[7.2]{CTS}. The Kummer sequence gives an exact sequence (same for \(\tilde{f}\))
\[0\to R^1f_*\mathbb{Z}/2\to R^1f_*\mathbb{G}_m\to R^1f_*\mathbb{G}_m\to\mathbb{Z}/2\to 0.\]
Hence we have
\[\begin{array}{cc}{_2}Pic(X)=H^0(k,R^1f_*\mathbb{Z}/2)&{_2}Pic(\widetilde{X^{red}})=H^0(k,R^1\tilde{f}_*\mathbb{Z}/2)\end{array}.\]
Applying \(Rf_*\) to the exact sequence defining \(F\) we have an exact sequence
\[0\to f_*\mathbb{Z}/2\to\tilde{f}_*\mathbb{Z}/2\to f_*F\to R^1f_*\mathbb{Z}/2\to R^1\tilde{f}_*\mathbb{Z}/2.\]
So we obtain an exact sequence
\begin{equation}\label{gal}0\to f_*\mathbb{Z}/2\to\tilde{f}_*\mathbb{Z}/2\to f_*F\to{_2}G\to 0.\end{equation}

The \(H^0(k,{_2}G)\) is identified with \(Ker({_2}Pic^0(X)\xrightarrow{p^*}{_2}Pic^0(\widetilde{X^{red}}))\), which is the cokernel of the map
\[H^0(\widetilde{X^{red}})\to H^0(X,F).\]
The first result follows from the exact sequence
\[\oplus_{x\in X_{sing}}H^0(x)\to\oplus_{y\in p^{-1}(X_{sing})}H^0(y)\to H^0(X,F)\to\oplus_{x\in X_{sing}}Z(x)\to0.\]

For the second statement, suppose \(H^0(\widetilde{X^{red}},O_{\widetilde{X^{red}}})\) is a direct sum of finite fields \(k_a\). We have
\[\begin{array}{cc}H^i(k,f_*\mathbb{Z}/2)=H^i(k)&H^i(k,\tilde{f}_*\mathbb{Z}/2)=\oplus_aH^i(k,\mathbb{Z}/2|_{k_a})=\oplus_aH^i(k_a)\end{array}.\]
The terms above vanish when \(i\geq 2\). Hence we get an exact sequence by \eqref{gal}
\begin{equation}\label{gal1}\oplus_aH^1(k_a)\to H^1(k,f_*F)\to H^1(k,{_2}G)\to 0.\end{equation}
Since \(F\) is a direct sum of skycraper sheaves we have \(R^1f_*F=0\), hence \(H^1(k,f_*F)=H^1(X,F)\) by Leray spectral sequence. The statement follows from \eqref{C}, \eqref{gal1} and \(H^1(k_a)=O^{\times}(k_a)/2\).
\end{proof}

If \(X\) is smooth, there is a Merkurjev's pairing \(<-,->\) (\cite[5.1]{Sch1})
\[\begin{array}{ccccc}Pic(X)/2&\times&\mathcal{H}^1(X)&\to&K_1^M(k)/2=H^1(k)\\x&,&f&\mapsto&Tr_{k(x)/k}(f|_{k(x)})\end{array}.\]
We have
\[\mathcal{H}^1(X)=H^0(X,\textbf{K}_1^M/2)=H^{1,1}_M(X)=H^1(X).\]
\begin{lemma}
The pairing is defined for a general proper curve \(X\), namely
\[\begin{array}{ccccc}CH_0(X)/2&\times&H^1(X)&\to&H^1(k)\\x&,&f&\mapsto&Tr_{k(x)/k}(f|_{k(x)})\end{array}.\]
\end{lemma}
\begin{proof}
We can define a pairing \(<-,->\)
\[\begin{array}{ccccc}Z_0(X)/2&\times&H^1(X)&\to&H^1(k)\\x&,&f&\mapsto&Tr_{k(x)/k}(f|_{k(x)})\end{array}.\]
For any \(f\in H^1(X)\) and \(x\in\widetilde{X^{red}}\), we have
\begin{align*}
	&<x,p^*f>\\
=	&Tr_{k(x)/k}(p^*(f)|_{k(x)})\\
=	&Tr_{k(x)/k}(p^*(f|_{k(p(x))}))\\
=	&Tr_{k(p(x))/k}\circ Tr_{k(x)/k(p(x))}(p^*(f|_{k(p(x))}))\\
=	&[k(x):k(p(x))]Tr_{k(p(x))/k}(f|_{k(p(x))})\\
=	&<p_*x,f>.
\end{align*}
Hence for any \(g\in K(X)\) and \(f\in H^1(X)\), we have
\[<div(g),f>=<p_*div(p^*(g)),f>=<div(p^*(g)),p^*(f)>=0.\]
So the \(<-,->\) is well defined.
\end{proof}
When \(k\) is a finite field, we have \(H^1(k)=\mathbb{Z}/2\). The pairing above induces a map
\[CH_0(X)/2\xrightarrow{m}H^1(X)^{\vee}.\]
\begin{definition}\label{Merkurjev}
In the context above, define
\[\begin{array}{cc}S(X)=Coker(m)&D(X)=Im(m)\end{array}.\]
\end{definition}
\begin{proposition}\label{unramified}
Denote by \(\{x_i\}=X_{sing}\) and \(\{y_j\}=p^{-1}(X_{sing})\). We have
\[S(X)^{\vee}=Coker(H^0(\widetilde{X^{red}})\to\frac{\oplus_jH^0(y_j)}{\oplus_iH^0(x_i)}).\]
\end{proposition}
\begin{proof}
There is an exact sequence
\[\oplus_iH^*(x_i)\to\oplus_jH^*(y_j)\to H^*(X,F)\to\oplus_iH^{*+1}(x_i)\to\oplus_jH^{*+1}(y_j).\]
Taking dual there is an exact sequence
\[0\to H^1(X,F)^{\vee}\to\oplus_jH^1(y_j)^{\vee}\to\oplus_iH^1(x_i)^{\vee},\]
where the last arrow is identified with the pushforward of cycles along the normalization. We have a commutative diagram with exact rows
\[
	\xymatrix
	{
		K(\widetilde{X^{red}})^{\times}\ar[r]\ar@{=}[d]	&Z_0(\widetilde{X^{red}})/2\ar[r]\ar[d]^{a}	&CH_0(\widetilde{X^{red}})/2\ar[r]\ar[d]^{p_*}	&0\\
		K(X^{red})^{\times}\ar[r]											&Z_0(X)/2\ar[r]												&CH_0(X)/2\ar[r]												&0\\
	}
\]
where \(Ker(a)=H^1(X,F)^{\vee}\). Hence there is a surjection \(H^1(X,F)^{\vee}\to Ker(p_*)\) by Snake Lemma. From this we see that there is a commutative diagram with exact rows
\[
	\xymatrix
	{
		H^1(X,F)^{\vee}\ar[r]\ar@{=}[d]	&CH_0(\widetilde{X^{red}})/2\ar[r]^{p_*}\ar[d]_{\cong}	&CH_0(X)/2\ar[r]\ar[d]				&E\ar[d]_{\beta}\ar[r]	&0\ar[d]\\
		H^1(X,F)^{\vee}\ar[r]				&H^1(\widetilde{X^{red}})^{\vee}\ar[r]					&H^1(X)^{\vee}\ar[r]^{\gamma}		&H^0(X,F)^{\vee}\ar[r]^{\alpha}&H^0(\widetilde{X^{red}})^{\vee}
	}.
\]
We have
\[S(X)=Im(\gamma)/Im(\beta)\]
by Five Lemma. The \(Im(\beta)\) is generated by the image of singularities \(x_i\) so we have
\[Im(\gamma)^{\vee}=Ker(\alpha)^{\vee}=Coker(\alpha^{\vee})\]
\[Im(\beta)^{\vee}=Coker(\oplus_jH^0(y_j)\to H^0(X,F)).\]
So the statement follows from Snake Lemma.
\end{proof}

\begin{definition}\label{G1}
By Proposition \ref{picard}, define \(G(X)\) to be the image of the composite
\[\oplus_{y\in p^{-1}(X_{sing}),\sqrt{-1}\notin k(y)}H^1(y)\to H^1(k,{_2}G)\to H^1(k,{_2}Pic(X)).\]
\end{definition}
Denote by \(X\sqrt{-1}=X\times_kk\sqrt{-1}\). We have an exact sequence
\[\cdots\to H^i(X)\to H^i(X\sqrt{-1})\to H^i(X)\xrightarrow{\rho}H^{i+1}(X)\to\cdots.\]
\begin{proposition}\label{D}
We have an exact sequence
\[0\to Ker(D(X)\to D(X\sqrt{-1}))\to\rho^{\vee}H^2(X)^{\vee}\to G(X)^{\vee}\to0.\]
\end{proposition}
\begin{proof}

Define \(T(X)=Ker(S(X)\to S(X\sqrt{-1}))\). There is a commutative diagram with exact rows
\[
	\xymatrix
	{
		0\ar[r]	&D(X)\ar[r]\ar[d]		&H^1(X)^{\vee}\ar[r]\ar[d]		&S(X)\ar[r]\ar[d]		&0\\
		0\ar[r]	&D(X\sqrt{-1})\ar[r]	&H^1(X\sqrt{-1})^{\vee}\ar[r]	&S(X\sqrt{-1})\ar[r]	&0\\
	}.
\]
By Snake Lemma there is an exact sequence
\[0\to Ker(D(X)\to D(X\sqrt{-1}))\to\rho^{\vee} H^2(X)^{\vee}\xrightarrow{\varphi}T(X).\]
There is a commutative diagram
\[
	\xymatrix
	{
		\oplus_jH^0(y_j)\ar@{->>}[r]^c\ar@{->>}[dd]_{\rho}	&S(X)^{\vee}\ar[r]\ar@{->>}[d]			&H^1(X)\ar[d]\\
																					&T(X)^{\vee}\ar[r]^{\varphi^{\vee}}	&\rho H^1(X)\ar@{^{(}->}[d]\\
		\oplus_{y\in p^{-1}(X_{sing}),\sqrt{-1}\notin k(y)}H^1(y)\ar[r]^-{a}						&H^1(X,F)\ar[r]^b									&H^2(X)
	}
\]
where \(c\) is given by Proposition \ref{unramified}. So we have
\[Im(\varphi^{\vee})=Im(b\circ a)=Im(\oplus_{y\in u(X)}H^1(y)\to Ker(Pic(X)/2\to Pic(\widetilde{X^{red}})/2))=G(X).\]
Hence the statement follows.
\end{proof}
\begin{definition}
For any Gorenstein scheme \(X\), define
\[\Theta(X)=\{L\in Pic(X)|L\otimes L=\omega_X\}\]
to be the set of Theta characteristic of \(X\).
\end{definition}
Let us suppose in the sequel that the \(X\) is a connected Gorenstein projective curve over \(k\) with a rational point.
\begin{lemma}\label{fixed}
Suppose \(C\in\Theta(X_{\bar{k}})\) and denote by \(\sigma\in Gal(\bar{k}/k)\) the Frobenius. The \(\Theta(X)\neq\emptyset\) if and only if the map
\[\begin{array}{ccc}f:{_2Pic(X_{\bar{k}})}&\to&_2Pic(X_{\bar{k}})\\L&\mapsto&\sigma(L\otimes C)\otimes C^{-1}\end{array}\]
has a fixed point.
\end{lemma}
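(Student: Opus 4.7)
The plan is to translate $\Theta(X)\neq\emptyset$ into a condition on $Pic(X_{\bar{k}})$, then parametrise $\Theta(X_{\bar{k}})$ by its torsor structure over $_2Pic(X_{\bar{k}})$ using $C$, and finally read off when the parametrisation produces a Galois-invariant element.

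First I would invoke the Hochschild--Serre (Leray) spectral sequence
\[H^p(Gal(\bar{k}/k),H^q(X_{\bar{k}},\mathbb{G}_m))\Longrightarrow H^{p+q}(X,\mathbb{G}_m),\]
whose five-term exact sequence, combined with Hilbert 90, reads
\[0\to Pic(X)\to Pic(X_{\bar{k}})^{Gal(\bar{k}/k)}\to Br(k)\to Br(X).\]
The rational point $x\in X(k)$ provides a retraction $x^{*}:Br(X)\to Br(k)$ of the pullback, forcing the last arrow to be injective and hence $Pic(X)\cong Pic(X_{\bar{k}})^{Gal(\bar{k}/k)}$. In the intended applications $k$ is finite, so $Gal(\bar{k}/k)$ is topologically generated by $\sigma$ and Galois-invariance reduces to $\sigma$-invariance.

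With this identification, $\Theta(X)\neq\emptyset$ is equivalent to the existence of some $M\in Pic(X_{\bar{k}})$ with $\sigma(M)=M$ and $M^{\otimes 2}=\omega_{X_{\bar{k}}}$. Since $C\in\Theta(X_{\bar{k}})$ is given, any such $M$ admits a unique decomposition $M=L\otimes C$ with $L\in{_2Pic(X_{\bar{k}})}$, exhibiting $\Theta(X_{\bar{k}})$ as a torsor under $_2Pic(X_{\bar{k}})$ trivialised by $C$. A direct computation then yields
\[\sigma(L\otimes C)=L\otimes C\iff\sigma(L\otimes C)\otimes C^{-1}=L\iff f(L)=L,\]
which gives the claimed equivalence.

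The main obstacle is the descent isomorphism $Pic(X)\cong Pic(X_{\bar{k}})^{Gal(\bar{k}/k)}$; without either the rational point hypothesis or some replacement providing a section of $Br(k)\to Br(X)$, the obstruction map $Pic(X_{\bar{k}})^{Gal}\to Br(k)$ could be non-trivial and a Galois-invariant theta characteristic might fail to descend to $k$. Once this descent is secured via $x^{*}$, the remainder of the argument is formal bookkeeping with the $_2Pic(X_{\bar{k}})$-torsor structure on $\Theta(X_{\bar{k}})$.
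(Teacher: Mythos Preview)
Your proof is correct and follows the same strategy as the paper: identify $\Theta(X)\neq\emptyset$ with the existence of a $\sigma$-invariant theta characteristic on $X_{\bar{k}}$, then use the $_2Pic(X_{\bar{k}})$-torsor structure on $\Theta(X_{\bar{k}})$ based at $C$ to rewrite this as a fixed-point condition for $f$. The only minor difference lies in the justification of the descent $Pic(X)\cong Pic(X_{\bar{k}})^{\sigma}$: you run Hochschild--Serre and kill the Brauer obstruction via the rational-point retraction $x^{*}$, whereas the paper observes that $Pic^{0}_{X}$ (hence $_2Pic_X$) is represented by a $k$-scheme of locally finite type, so every $\bar{k}$-point lies over a finite subextension, the Galois action is continuous, and a $\sigma$-fixed point is automatically a $k$-point. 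Both routes rely on the standing hypothesis that $X$ has a rational point and are interchangeable here.
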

\begin{proof}
The \(Pic^0_{X}\) is represented by a scheme of locally finite type over \(k\) by \cite[Theorem 3,\S8.2]{BLR}, hence so does \(_2Pic_{X}\). So every \(L\in{_2Pic(X_{\bar{k}})}\) is defined over a finite extension \(k'/k\). Hence the \(Gal(\bar{k}/k)\) acts continuously on \(_2Pic(X_{\bar{k}})\). The \(f\) has a fixed point \(M\) if and only if \(C\otimes M\) is fixed by \(\sigma\), which is equivalent to \(C\otimes M\) is defined over \(k\) and is a square root of \(\omega_{X/k}\) by continuity of the Galois action.
\end{proof}
The following result was proven in \cite[Theorem 4.3]{R}.
\begin{lemma}\label{sqsm}
We have \(\Theta(\widetilde{X^{red}})\neq\emptyset\).
\end{lemma}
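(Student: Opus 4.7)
The normalization $\widetilde{X^{red}}$ is smooth of pure dimension one over $k$, and hence decomposes as a finite disjoint union of smooth proper geometrically connected curves (each defined over its field of constants, a finite separable extension of $k$). Since $\Theta(-)$ is compatible with both disjoint unions and Weil restriction, the question immediately reduces to the case where $\widetilde{X^{red}}$ is a single smooth proper geometrically connected curve $Z/k$.

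Over the algebraic closure the Jacobian $J := \mathrm{Pic}^0(Z_{\bar k})$ is $2$-divisible (as $\mathrm{char}(k) \neq 2$), so $\omega_{Z_{\bar k}} = 2C$ for some $C \in \mathrm{Pic}(Z_{\bar k})$, and hence $\Theta(Z_{\bar k}) \neq \emptyset$. By Lemma \ref{fixed}, producing an element of $\Theta(Z)$ is equivalent to finding a fixed point of $f(L) = \sigma(L \otimes C) \otimes C^{-1}$ on ${}_2\mathrm{Pic}(Z_{\bar k})$. Unwinding, this asks that the $2$-torsion cocycle $\sigma \mapsto \sigma(C) - C \in {}_2 J(\bar k)$ be a coboundary, i.e.\ that its class in $H^1(k, {}_2 J)$ vanish.

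To identify the obstruction concretely I work in the setting of \S\ref{finite field}, so $k$ is finite. Lang's theorem then forces $H^1(k, J) = 0$, and the Kummer sequence $0 \to {}_2 J \to J \xrightarrow{2} J \to 0$ identifies $H^1(k, {}_2 J)$ with $J(k)/2J(k)$. Applying Lang a second time to $0 \to J \to \mathrm{Pic}(Z_{\bar k}) \to \mathbb{Z} \to 0$ furnishes a $k$-rational divisor class $D_1$ of degree $1$, and a short diagram chase shows the obstruction corresponds to the class of $\omega_Z - 2(g-1)D_1$ in $J(k)/2J(k)$, which in turn is the mod-$2$ image of $\omega_Z$ inside $\mathrm{Pic}(Z)/2 = H^2(Z, \mathbb{Z}/2)$ (here invoking $\mathrm{Br}(Z) = 0$ from \cite[7.2]{CTS} as in Proposition \ref{finite field}).

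The main obstacle is then the vanishing of $[\omega_Z] \in H^2(Z, \mathbb{Z}/2)$. The plan is to appeal to the classical Riemann--Mumford Arf-invariant computation together with the self-duality of ${}_2 J$ under the Weil pairing: this asserts that the theta-characteristic torsor on a smooth proper curve carries a quadratic refinement of trivial Arf invariant, forcing the obstruction in $H^1(k, {}_2 J)$ to vanish. An alternative route uses the Bockstein spectral sequence of \S\ref{bockstein}: the operation $Sq^2 : H^{2,2}_M(Z, \mathbb{Z}/2) \to H^{4,3}_M(Z, \mathbb{Z}/2)$ is automatically zero because $\dim Z = 1$ makes the target vanish, and feeding this into the Proposition \ref{sq}-style argument yields $\omega_Z \in 2\mathrm{Pic}(Z)$, completing the construction of a theta characteristic on $Z$ and hence on $\widetilde{X^{red}}$.
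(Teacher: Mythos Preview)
Your reduction to the vanishing of $[\omega_Z]\in\mathrm{Pic}(Z)/2$ is correct and uses the same Lemma \ref{fixed} as the paper, but neither of the two concluding routes you sketch actually closes the argument.

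Route (b) has a genuine gap. Over the finite field $k$ one has $H^{4,3}_M(Z,\mathbb{Z}/2)\subseteq H^4_{\textrm{\'et}}(Z,\mathbb{Z}/2)=0$, so indeed $Sq^2=0$ on $H^{2,2}_M(Z)$, but this is vacuous: the Proposition \ref{sq} argument works over $K$ only because $\pi_*\colon H^{4,3}_M(X_K)\to H^{2,2}_M(K)\cong\mathbb{Z}/2$ is an isomorphism, so that ``$Sq^2=0$'' becomes equivalent to ``cup with $\omega$ is zero'', after which Poincar\'e duality (Proposition \ref{duality}) forces $\omega=0$. Over finite $k$ one has $H^{2,2}_M(k)=H^2(k)=0$, both maps land in the zero group, and nothing is learned about $\omega_Z$. (A variant that \emph{does} work: apply the same Wu-type relation $\pi_*\circ(Sq^2+\omega_Z\cup\,)=Sq^2\circ\pi_*$ to $H^{1,1}_M(Z)$ instead; there $Sq^2=0$ by the axioms, $\pi_*$ lands in $H^{-1,0}_M(k)=0$, so $\pi_*(\omega_Z\cup x)=0$ for all $x\in H^1(Z)$, and Poincar\'e duality $H^2(Z)\cong H^1(Z)^{\vee}$ gives $\omega_Z=0$.)

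Route (a) is too imprecise to stand as a proof, and the assertion that the theta-characteristic torsor ``carries a quadratic refinement of trivial Arf invariant'' is not correct as stated: the Mumford form $q$ depends on a choice of base point $C\in\Theta(Z_{\bar k})$, and its Arf invariant is $h^0(C)\bmod 2$, which varies with $C$. What is true---and this is exactly the paper's proof---is that the Weil pairing on ${_2}\mathrm{Pic}(\widetilde{X_{\bar k}^{red}})$ is nondegenerate, the map $f$ of Lemma \ref{fixed} is an affine transformation preserving it, and Atiyah's lemma \cite[Remarks, p.~61]{A} (any affine symplectic transformation of a finite $\mathbb{F}_2$-symplectic space has a fixed point) finishes immediately. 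The paper does this in two lines without ever passing through the cohomological obstruction.
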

\begin{proof}
The Weil pairing on \(_2Pic(\widetilde{X_{\bar{k}}^{red}})\) is nondegenerate and the \(f\) defined in Lemma \ref{fixed} preserves the Weil pairing. Apply \cite[Remarks, pp. 61]{A}.
\end{proof}
\begin{lemma}\label{simplify}
For any finite extension \(k'/k\) with odd degree, the \(\Theta(X)\neq\emptyset\) is equivalent to \(\Theta(X_{k'})\neq\emptyset\).
\end{lemma}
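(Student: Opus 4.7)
The forward direction is automatic: if \(L \in \Theta(X)\), then \(L_{k'}^{\otimes 2} = \omega_{X_{k'}/k'}\), as the dualizing sheaf commutes with flat base change. The content is the converse.

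My plan is to use the norm map along the finite étale cover \(\pi: X_{k'} \to X\) (étale because \(k'/k\) is separable). Let \(n = [k':k]\). For such a \(\pi\) one has the multiplicative homomorphism
\[N_{\pi}: Pic(X_{k'}) \longrightarrow Pic(X), \quad N_{\pi}(L) = \det(\pi_*L) \otimes \det(\pi_*O_{X_{k'}})^{-1},\]
satisfying \(N_{\pi}(\pi^*M) = M^{\otimes n}\) via the projection formula. Since \(\pi\) is étale, \(\omega_{X_{k'}/k'} = \pi^*\omega_{X/k}\). Given \(L \in \Theta(X_{k'})\), set \(M := N_{\pi}(L) \in Pic(X)\); then
\[M^{\otimes 2} = N_{\pi}(L^{\otimes 2}) = N_{\pi}(\pi^*\omega_{X/k}) = \omega_{X/k}^{\otimes n}.\]
Writing \(n = 2m+1\), one rearranges to \((M \otimes \omega_{X/k}^{\otimes -m})^{\otimes 2} = \omega_{X/k}\), producing the desired theta characteristic of \(X\).

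The only non-formal point is that \(X\) is merely Gorenstein, not smooth, so one should confirm that \(\omega_{X/k}\) is an actual line bundle (true by Gorenstein-ness) and that \(\omega_{X_{k'}/k'} = \pi^*\omega_{X/k}\) in this setting (flat base change of the relative dualizing sheaf, valid since \(k'/k\) is flat and étale). With these standard facts in hand, the oddness of \([k':k]\) enters only in the single arithmetic step of decomposing \(\omega^{\otimes n} = \omega \otimes \omega^{\otimes 2m}\); this is both the essence of the lemma and the reason the argument fails for \([k':k]\) even. An alternative route through Galois descent -- realizing \(\Theta(X_{\bar{k}})\) as a torsor under the \(2\)-torsion group \({}_2 Pic(X_{\bar{k}})\) and observing that \(Res: H^1(Gal(\bar{k}/k), {}_2 Pic) \to H^1(Gal(\bar{k}/k'), {}_2 Pic)\) is injective on this \(2\)-torsion \(H^1\) via \(Cor \circ Res = [k':k]\) -- gives the same result, but the norm construction is cleaner and produces the theta characteristic explicitly.
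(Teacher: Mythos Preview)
Your argument is correct. The paper's own proof is a single sentence: ``The \(H^2(X)\to H^2(X_{k'})\) is injective.'' This uses that the Kummer map embeds \(Pic(X)/2\hookrightarrow H^2(X)\), so \(\omega_X\) is a square iff its class in \(H^2(X)\) vanishes; injectivity of restriction then follows from \(Cor\circ Res=[k':k]\) being odd, hence the identity on \(\mathbb{Z}/2\)-coefficients.

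Your norm construction is the same transfer principle, carried out at the level of line bundles rather than \'etale cohomology. The underlying mechanism is identical (odd-degree transfer is an isomorphism on \(2\)-torsion), but your version is more explicit: it actually produces the theta characteristic as \(N_\pi(L)\otimes\omega_{X/k}^{\otimes -m}\), and it does not need to pass through the identification \(Pic(X)/2\subseteq H^2(X)\). The paper's phrasing is terser and leans on the ambient cohomological setup; yours is self-contained and constructive. Both are fine.
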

\begin{proof}
By Proposition \ref{finite field}, the map \(Pic(X)/2\to Pic(X_{k'})/2\) is injective by the same statement of \(H^2\).
\end{proof}

Suppose \(C\in\Theta(X_{\bar{k}})\). The theta-form (\cite[1.12]{Harr})
\[q(L)=dim(H^0(X_{\bar{k}},L\otimes C))-dim(H^0(X_{\bar{k}},C))(\textrm{mod }2)\]
defined on \(_2Pic(X_{\bar{k}})\) is quadratic form and induces the Weil pairing \(<-,->\) on \(H^1(X_{\bar{k}})\), which is obtained by the pairing after pulling back to the normalization (\cite[Theorem 1.13, Proposition 2.2]{Harr}). Let \(f\) be the affine transformation defined in Lemma \ref{fixed}, which satisfies
\[q(f(L))=q(L).\]
The \(q|_{_2G}\) is a linear function on \(_2G\) and is independent of the choice of \(C\) (\cite[Proposition 2.3,Proposition 2.4]{Harr}), denoted by \(l\). Note that \(l=l\circ\sigma\) on \(_2G\) since the Weil pairing vanishes on \(_2G\). So \(l\) is a linear function on \(H^1(k,{_2}G)\).
\begin{proposition}\label{sq1}
The followings are equivalent:
\begin{enumerate}
\item\label{a} The \(p^*\omega_{X_k}\) is a square;
\item\label{b} There exists some \(C\in\Theta(X_{\bar{k}})\) satisfying \(p^*(\sigma(C)\otimes C^{-1})=0\);
\item\label{c} \(\Theta(X_{\bar{k}})\neq\emptyset\) and \(l=0\) on \(R\);
\end{enumerate}
which are necessary conditions of \(\Theta(X)\neq\emptyset\).

If moreover \((\sigma-Id)({_2}G)=(\sigma^2-Id)({_2}G)\), we have
\[\Theta(X_{k'})\neq\emptyset,[k':k]=2.\]
\end{proposition}
\begin{proof}
If \(p^*\omega_{X}=2D\), take \(p^*C=D\) for some \(C\in Pic(X_{\bar{k}})\). Hence \(2C=\omega_{X}+N\) where \(N\in G\). Since \(G(\bar{k})\) is \(2\)-divisible by Remark \ref{div}, we have \(N=2N',N'\in G\) in \(Pic(X_{\bar{k}})\). Then \(C-N'\in\Theta(X_{\bar{k}})\) and \(p^*(C-N')\) is Frobenius invariant. Conversely, if \(2C=\omega_{X_{\bar{k}}}\) and \(p^*C\) is Frobenius invariant, the \(p^*C\) is a square root of \(p^*\omega_{X}\). So we have proved (\ref{a})\(\Leftrightarrow\)(\ref{b}).

Let us prove (\ref{b})\(\Leftrightarrow\)(\ref{c}). Take any \(C\in\Theta(X_{\bar{k}})\). The (\ref{b}) is equivalent to
\begin{equation}\label{r}p^*((\sigma-Id)(C))=(\sigma-Id)(D)\end{equation}
for some \(2\)-torsion \(D\) by the surjectivity of \(p^*\). By the same reasoning as \cite[Lemma 5.1]{A}, we have
\[q(L)=q(\sigma(L))+<\sigma(L),f(0)>=q(\sigma(L))+<\sigma(L),(\sigma-Id)(C)>.\]
The pairing is nondegenerate on \(_2Pic(\widetilde{X_{\bar{k}}^{red}})\) with null space \(_2G\). By \textit{loc. cit.}, the \(\sigma\) admits an adjoint \(\sigma^*\) so that \(Id=\sigma^*\sigma\) on the normalization. Moreover we have
\[Im(\sigma-Id)=Ker(\sigma^*-Id)^{\perp}\]
on \({_2}Pic(\widetilde{X_{\bar{k}}^{red}})\). For any \(x\in Ker(\sigma^*-Id)=Ker(\sigma-Id)\), write \(x=p^*(y),y\in{_2}Pic(X_{\bar{k}})\). We have
\[<p^*((\sigma-Id)(C)),x>=<(\sigma-Id)(C),y>=q(y)-q(\sigma(y))=l((\sigma-Id)(y)).\]
So the \eqref{r} is equivalent to \(l=0\) on \(G(\bar{k})\cap(\sigma-Id)({_2}Pic(X_{\bar{k}}))\), which is the kernel of \(H^1(k,{_2}G)\to H^1(k,{_2}Pic(X_{\bar{k}}))\), being equal to \(R\).

For the last statement, we have \((\sigma-Id)^2(C)=(\sigma^2-Id)(C)\) since \(2\sigma(C)=2C\). So
\[(\sigma^2-Id)(C)\in(\sigma-Id)({_2}G)=(\sigma^2-Id)({_2}G)\]
by the condition. The \(Gal(\bar{k}/k')\) is generated by \(\sigma^2\) so the proof is finished.
\end{proof}
For every \(x\in\widetilde{X^{red}}\) and \(f\in O_{\widetilde{X^{red}},x}^{\times}\), we can define
\[f(x)=\prod_{y\in Spec(k(x)\times_k\bar{k})}f(y)\in H^1(x)=H^1(k)=\mathbb{Z}/2\]
where the Frobenius \(\sigma\) acts as \(\sigma\cdot f=\sigma\circ f\circ\sigma^{-1}\). This induces the Tate pairing
\[[-,-]:\begin{array}{ccc}{_2}Pic(\widetilde{X^{red}})\times Pic^0(\widetilde{X^{red}})/2&\to&\mathbb{Z}/2\\(D,E)&\mapsto&f(E)\end{array}\]
where \(div(f)=2D\) and \(Supp(f)\cap Supp(E)=\emptyset\) (\cite{Br}, our notation avoids the extension degree).

For any component \(X_a\subseteq{\widetilde{X^{red}}}\), the image of the map \(deg:Pic(X_a)\to\mathbb{Z}\) is the ideal generated by \([O(X_a):k]\) by Hasse-Weil inequality. One can take a divisor \(D\) on \(X_a\) such that \(deg(D)=[O(X_a):k]\) and \(Supp(D)\cap p^{-1}(X_{sing})=\emptyset\).

The \(deg:Div(\widetilde{X^{red}})\to\mathbb{Z}^{\oplus n}\), where \(n=dim(H^0(\widetilde{X^{red}}))\), has image ideal \(([O(X_a):k])_{a=1\sim n}\). So one can find an \(x_0\in Div(\widetilde{X^{red}})\) such that \(deg(x_0)=([O(X_a):k])\) and \(Supp(x_0)\cap p^{-1}(X_{sing})=\emptyset\).
\begin{proposition}\label{semisimple}
In the context above, the boundary map
\[\partial:{_2}Pic(\widetilde{X^{red}})\to H^1(k,{_2}G)\]
is given by
\[\partial(D)=\overline{\oplus_{x\in X_{sing},i}[D,x_i-\frac{deg(x_i)}{deg(x_0)}x_0]}.\]
\end{proposition}
\begin{proof}
We may suppose \(X=Y\) since we only consider \(2\)-torsion part. Then \(Pic(X)=CaCl(X)\). As \(p(Supp(x_0))\) lives in the smooth locus, for any \(f\in K(\widetilde{X^{red}})\) being inverible on \(Supp(x_0)\), the constant \(f(x_0)\) vanishes on \(H^1(k,{_2}G)\). Suppose the \(D\) is written as a Cartier divisor \((U_j,f_j)\) where either \(f_j=1\) and \(U_j\cap p^{-1}(X_{sing})=\{x\}\), or \(U_j\subseteq p^{-1}(X_{reg})\). Then for every \(x\in X_{sing}\), pick a small neighbourhood \(x\in V_x\) such that \(p^{-1}(V_x)\subseteq\cup_{x\in p(U_i)}U_i\). Then the divisor
\[D'=\{(V_x,1),(U_j,f_j)_{U_j\cap p^{-1}((X_k)_{sing})=\emptyset}\}\]
satisfies \(p^*(D')=D\) in \(Div(\widetilde{X^{red}})\). Take a \(v\in K(\widetilde{X_{\bar{k}}^{red}})\) such that for every \(x_i^t\in Spec(k(x_i)\times_k\bar{k}),t=0,\cdots,n-1\),
\[v(x_i^t)=\sqrt{f(x_i^t)}\]
where \(2D=div(f)\) is given in the condition. The \(f\) satisfies \(f\circ\sigma=\sigma\circ f\). Suppose each \(x_i^t\) can be written as \(\sigma^a(x_i^{t_0})\) for some \(t_0\), hence there is an equality
\begin{equation}\label{period}\sigma^n(x_i^{t_0})=x_i^{t_0}.\end{equation}

Let us set \(v(x_i^{t_0})\) to be any square root of \(\frac{f(x_i^{t_0})}{f(x_0)}\) in \(\bar{k}\). If we can choose these \(v(x_i^t)\) such that
\begin{equation}\label{period1}v(\sigma(x_i^t))=\sigma(v(x_i^t))\end{equation}
for every \(t\), then by \eqref{period} the \(v(x_i^{t_0})\) is fixed by \(\sigma^n\) hence \(v(x_i^{t_0})\in k(x_i)\), namely the \(\frac{f(x_i^{t_0})}{f(x_0)}\) is a square in \(k(x_i)\). The converse is clearly true. In case \(f(x_i^{t_0})\) is not a square, the \eqref{period1} is possible to be true simutaneously for all \(t\) such that \(\sigma(x_i^t)\neq x_i^{t_0}\).

Let us take a \(v\) satisfying all conditions discussed in the paragraph above. Shrinking \(V_x\) the divisor (on \(X_{\bar{k}}\))
\[D''=\{(V_x,v),(U_j,f_j)_{U_j\cap p^{-1}(X_{sing})=\emptyset}\}\]
satisfies
\[p^*(D'')=D,2D''=div(f)\]
on \(X_{\bar{k}}\). Then
\[(\sigma-Id)(D'')=\{(V_x,\frac{\sigma(v)}{v}),(U_j,1)_{U_j\cap p^{-1}(X_{sing})=\emptyset}\}.\]
Since we assumed \(G\) is a torus, this divisor has the representative
\[\overline{\oplus_i\frac{\sigma(v)}{v}(x_i)}\in H^1(k,{_2}G)\]
given in Proposition \ref{picard}. Using the notation before, we have
\[\frac{\sigma(v)}{v}(x_i)=\begin{cases}-1&f(x_i^{t_0})\notin (k(x_i)^{\times})^2\\1&f(x_i^{t_0})\in (k(x_i)^{\times})^2\end{cases},\]
which gives the statement.
\end{proof}
The Proposition \ref{semisimple} describes the extension of \(Pic^0_{\widetilde{X^{red}}/k}\) by \(G\) by Tate pairing. 
\begin{definition}\label{lambda}
Suppose \(L,M\in Div(\widetilde{X^{red}})\) have supports contained in \(X_{reg}\) and
\[L=2M+div(f).\]
Define
\[\Lambda(L)=(f(x))_{x\in p^{-1}(X_{sing})}\in H^1(k,{_2}G)/R\]
which is independent of the choice of \(M\).

If \(L\in Div(X)\), \(Supp(L|_{X^{red}})\subseteq X_{reg}\) and \(L|_{\widetilde{X^{red}}}\) is a square, we define \(\Lambda(L)=\Lambda(p^*L)\).
\end{definition}
\begin{proposition}\label{square}
Suppose that \(L\in Div(X)\), \(Supp(L|_{X^{red}})\subseteq X_{reg}\) and that \(p^*O(L)\) is a square. The \(O(L)\) is a square if and only if \(\Lambda(L)=0\).

In other words, the \(\Lambda\) computes the isomorphism
\[Ker(Pic^0(X)/2\to Pic^0(\widetilde{X^{red}})/2)\xrightarrow{\cong}H^1(k,{_2}G)/R.\]
\end{proposition}
\begin{proof}
We may suppose \(X\) is reduced. If \(O(L)\) is a square, we have \(p^*L=2p^*M+p^*div(f)\) so \(\Lambda(p^*L)=0\). Conversely, since \(p^*O(L)\) is a square, we have
\[L=2M+N\]
as Cartier divisors with supports being contained in \(X_{reg}\), where \(p^*N=div(f)\). The representative of \(N\) in \(H^1(k,{_2}G)=H^0(k,G)/2\) is \(\overline{\oplus_if(x_i)}\). If \(\Lambda(L)=0\), that representative goes to zero in \(Pic^0(X)/2\), namely \(N\) is a square. Hence \(O(L)\) is a square.
\end{proof}
\begin{definition}\label{poly}
For every \(\lambda\in\mathbb{P}^1_k\), define \(P_{\lambda}(x)\) to be the monic irreducible polynomial in \(k[x]\) such that
\[div(P_{\lambda}(x))=\lambda-deg(\lambda)\cdot\infty.\]
This gives a homomorphism \(Div(\mathbb{P}^1_k)\to k[x]\).
\end{definition}
\begin{definition}
Suppose that \(X\) is smooth with a finite map \(\pi:X\to\mathbb{P}^1\) such that \(\pi^*O(1)\) is a square. Define \(f_{\pi}\in K(X)\) by
\[\pi^*(\lambda)=div(\frac{P_{\lambda}(x)\circ\pi}{f_{\pi}})+2D\]
for some \(f_{\pi}\) (take some \(\pi^*(-\infty)=div(f_{\pi})+2E\)) being independent of \(\lambda\), which is unique up to some \(g\in K(X)^{\times}\) such that \(div(g)\in 2Div(X)\).
\end{definition}
\begin{remark}\label{factor1}
In the context above, suppose \(\pi\) is equal to a composite
\[X\xrightarrow{\pi'}\mathbb{P}^1\xrightarrow{\varphi}\mathbb{P}^1\]
where \(f_{\pi'}\) is defined. For any \(\lambda\in\mathbb{A}^1\), we have
\[\pi^*\lambda=\pi'^*\varphi^*\lambda=div(\frac{P_{\varphi^*\lambda}(x)\circ\pi'}{f_{\pi'}})+2D'.\]
If \(P_{\varphi^*\lambda},f_{\pi'}\) are inverible at \(x_0\in X\), we have
\[\frac{P_{\varphi^*\lambda}\circ\pi'}{f_{\pi'}}(x_0)=\frac{P_{\lambda}(\pi(x_0))}{f_{\pi'}(x_0)}.\]
\end{remark}
\section{The Generic Fiber}
Suppose that \(K\) is a local field with \(2\in O_K^{\times}\) and residue field \(k\) and that \(X\) is a regular proper flat curve on \(O_K\) with smooth generic fiber \(X_K\) and special fiber \(X_k\). Suppose that \(X_K\) is connected and has a rational point. It specializes to a rational point on \(X_k\), so \(tr(X_k)>0\).
\begin{proposition}\label{duality}
We have
\[H^i(X_K)=H^{4-i}(X_K)^{\vee}\]
as \(\mathbb{Z}/2\)-vector spaces.
\end{proposition}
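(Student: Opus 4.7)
My plan is to derive this duality from Artin--Verdier duality for a smooth proper curve over a nondyadic local field, which combines geometric Poincar\'e duality on $X_{\bar K}$ with local Tate duality for $K$. Since $\mathrm{char}(k)\neq 2$, the residue characteristic of $K$ is odd, so $\mu_2\cong\mathbb{Z}/2$ canonically; and since the residue field is finite, $K$ has $2$-cohomological dimension $2$, which explains why the duality reaches up to degree $2\cdot 1+2=4$.

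Concretely, I would work with the Hochschild--Serre spectral sequence
\[E_2^{p,q}=H^p(G,H^q(X_{\bar K}))\Longrightarrow H^{p+q}(X_K), \qquad G=\mathrm{Gal}(\bar K/K).\]
Geometrically, $X_{\bar K}$ is a connected smooth proper curve (connectedness passes to $\bar K$ because of the $K$-rational point), hence $H^q(X_{\bar K})$ is a finite $G$-module supported in degrees $0,1,2$, and the cup product produces a perfect $G$-equivariant Poincar\'e pairing $H^q(X_{\bar K})\times H^{2-q}(X_{\bar K})\to H^2(X_{\bar K},\mu_2)\cong\mathbb{Z}/2$. Arithmetically, local Tate duality provides a perfect pairing $H^p(G,M)\times H^{2-p}(G,M^{\vee})\to\mathbb{Z}/2$ for every finite $2$-primary $G$-module $M$. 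Combining these two perfect pairings, $E_2^{p,q}$ is perfectly dual to $E_2^{2-p,2-q}$, which lies in total degree $4-(p+q)$.

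The main step is then to show that, since cup product on $H^*(X_K)$ respects the Hochschild--Serre filtration, the $E_2$-pairings descend to $E_\infty$ and assemble into a perfect pairing $H^i(X_K)\times H^{4-i}(X_K)\to H^4(X_K)\cong\mathbb{Z}/2$, with the trace normalization fixed by the rational point. This compatibility with differentials is the technical obstacle; I would handle it by quoting the Artin--Verdier duality theorem for a smooth proper curve over a nondyadic local field with coefficients $\mathbb{Z}/2$ directly, as in Milne's \emph{Arithmetic Duality Theorems} (Chapter II, \S7). Because $X_K$ is proper, $H^*_c$ agrees with $H^*$; because $\mu_2\cong\mathbb{Z}/2$, no Tate twist appears. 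This yields the identification of $\mathbb{Z}/2$-vector spaces asserted in the proposition, and in particular the equality of dimensions.
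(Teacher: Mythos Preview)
Your approach is correct but genuinely different from the paper's. The paper argues via Lichtenbaum's duality for $\mathbb{G}_m$: from
\[
H^i(X_K,\mathbb{G}_m)\otimes_{\mathbb{Z}}\hat{\mathbb{Z}}\cong \mathrm{Hom}(H^{3-i}(X_K,\mathbb{G}_m),\mathbb{Q}/\mathbb{Z})
\]
one extracts $H^i(X_K,\mathbb{G}_m)/2\cong({}_2H^{3-i}(X_K,\mathbb{G}_m))^{\vee}$, and then the Kummer short exact sequence
\[
0\to H^{i-1}(X_K,\mathbb{G}_m)/2\to H^i(X_K)\to{}_2H^i(X_K,\mathbb{G}_m)\to0
\]
splices these into the desired $\mathbb{Z}/2$-duality. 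Your route instead combines geometric Poincar\'e duality on $X_{\bar K}$ with local Tate duality via the Hochschild--Serre spectral sequence, or alternatively quotes the finite-coefficient Artin--Verdier statement directly; since $\mu_2\cong\mathbb{Z}/2$ and $X_K$ is proper, this gives the same conclusion. Both arguments are standard; yours is more conceptual and generalises cleanly, while the paper's is more hands-on and, importantly, yields as a by-product the identification ${}_2H^2(X_K,\mathbb{G}_m)\cong(\mathrm{Pic}(X_K)/2)^{\vee}$, which the paper explicitly reuses in the proof of the next proposition. If you go your route, be aware that this intermediate identity would have to be supplied separately later.
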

\begin{proof}
By \cite{L} for every \(i\) there is an isomorphism
\[H^i(X_K,\mathbb{G}_m)\otimes_{\mathbb{Z}}\hat{\mathbb{Z}}=Hom(H^{3-i}(X_K,\mathbb{G}_m),\mathbb{Q}/\mathbb{Z}).\]
So
\[H^i(X_K,\mathbb{G}_m)/2=Hom(H^{3-i}(X_K,\mathbb{G}_m),\mathbb{Q}/\mathbb{Z})/2=({_2H}^{3-i}(X_K,\mathbb{G}_m))^{\vee}.\]
The statement follows from the exact sequence
\[0\to H^{i-1}(X_K,\mathbb{G}_m)/2\to H^i(X_K)\to{_2H}^i(X_K,\mathbb{G}_m)\to0.\]
\end{proof}
\begin{proposition}\label{etale}
We have
\begin{enumerate}
\item\(dim(H^0(X_K))=dim(H^0(X_k));\)
\item We have an exact sequence
\[0\to H^1(X_k)\to H^1(X_K)\to H^0(\widetilde{X_k^{red}})\to 0.\]
Consequencely \(dim(H^1(X_K))=dim(H^2(X_k))+1\).
\item\(dim(H^2(X_K))=2dim(Pic(X_k)/2);\)
\item\(Pic(X_K)/2=Pic(X)/2=Pic(X_k)/2=H^2(X_k)\).
\end{enumerate}
\end{proposition}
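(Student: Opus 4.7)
The plan is to use proper base change together with the Leray spectral sequence for the open immersion $j\colon X_K\hookrightarrow X$. Proper base change applied to $X\to Spec(O_K)$ over the Henselian local base gives $H^i(X,\mathbb{Z}/2)\cong H^i(X_k,\mathbb{Z}/2)$ for every $i$. Since $X_K$ has a rational point it is geometrically connected, so $X$ is connected, and by Stein factorisation so is $X_k$; this gives $dim(H^0(X_K))=dim(H^0(X_k))=1$ and settles (1). The sheaves $R^qj_*\mathbb{Z}/2$ are supported on $X_k$ for $q\geq 1$, with stalk at $x\in X_k$ equal to $H^q(O_{X,x}^{sh}[\pi^{-1}],\mathbb{Z}/2)$. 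On the regular locus of $X_k$ Gabber's absolute purity applies and shows the stalk of $R^1j_*\mathbb{Z}/2$ is one-dimensional; at a singular point a direct local computation (model case of a node, where $O_{X,x}^{sh}[\pi^{-1}]\cong K^{sh}[w,w^{-1}]$ and its étale cohomology produces a two-dimensional first stalk) shows the stalk has dimension equal to the number of local branches. Globally this identifies $R^1j_*\mathbb{Z}/2|_{X_k}\cong p_*\mathbb{Z}/2$, where $p\colon\widetilde{X_k^{red}}\to X_k^{red}$ is the normalization.

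I would then apply the Leray spectral sequence $E_2^{p,q}=H^p(X_k,R^qj_*\mathbb{Z}/2)\Rightarrow H^{p+q}(X_K)$, whose rows $q=0$ and $q=1$ of the $E_2$-page are $H^p(X_k)$ and $H^p(\widetilde{X_k^{red}})$ respectively. The low-degree five-term exact sequence reads
\[0\to H^1(X_k)\to H^1(X_K)\to H^0(\widetilde{X_k^{red}})\xrightarrow{d_2}H^2(X_k)\to\cdots,\]
and once $d_2$ is shown to vanish one obtains the exact sequence of (2). Part (3) then follows by a dimension count across the contributions to $H^2(X_K)$, using $dim(H^2(X_k))=dim(Pic(X_k)/2)$ from Proposition \ref{finite field} together with the values of $H^*(\widetilde{X_k^{red}})$ from the same proposition.

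For part (4), Kummer together with $Br(X_k)=0$ (Proposition \ref{finite field}) gives $Pic(X_k)/2=H^2(X_k,\mathbb{Z}/2)$, which by proper base change equals $H^2(X,\mathbb{Z}/2)$. The Kummer embedding $Pic(X)/2\hookrightarrow H^2(X,\mathbb{Z}/2)$ together with the surjectivity of the specialization $Pic(X)\twoheadrightarrow Pic(X_k)$ (standard for regular proper models over Henselian bases, via formal smoothness of the relative Picard functor) yields $Pic(X)/2=Pic(X_k)/2$. The remaining equality $Pic(X_K)/2=Pic(X)/2$ uses that the kernel of $Pic(X)\twoheadrightarrow Pic(X_K)$ is generated by classes $[C_i]$ of irreducible components of $X_k$ modulo the single relation $\sum m_i[C_i]=0$; one must show each $[C_i]$ lies in $2\,Pic(X)$. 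The main obstacles are the local analysis of $Rj_*\mathbb{Z}/2$ at non-nodal singularities, the vanishing of the Leray differential $d_2\colon H^0(\widetilde{X_k^{red}})\to H^2(X_k)$, and the verification $[C_i]\in 2\,Pic(X)$ in (4); of these the vanishing of $d_2$ and the squareness of the component classes are the most delicate, and will presumably depend on the specific structure of the regular proper model together with the existence of a rational point on $X_K$.
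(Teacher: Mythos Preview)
Your Leray/localization approach is a reasonable way to obtain the \emph{left} part of the sequence in (2), and is close in spirit to the paper's use of two Gysin sequences (remove the singular locus of $X_k$, then the regular locus, using absolute purity). However, the three obstacles you flag at the end are precisely the points where your plan is incomplete, and the paper does \emph{not} attack them head-on; it circumvents them.

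The paper proves (4) first, and not by showing $[C_i]\in 2\,Pic(X)$. Instead it uses that $Pic^0_{X/O_K}$ is separated over $O_K$ (Bosch--L\"utkebohmert--Raynaud), so $Pic^0(X)\to Pic^0(X_K)$ is injective, hence an isomorphism; thus $Pic(X)/2=Pic(X_K)/2$. For $Pic(X)/2=Pic(X_k)/2$ it lifts line bundles via deformation theory and Grothendieck existence, then uses $Pic(X)/2\hookrightarrow H^2(X)=H^2(X_k)=Pic(X_k)/2$. Your proposed route through ``$[C_i]$ is a square'' is both harder and unnecessary.

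Part (3) in the paper is \emph{not} read off the Leray spectral sequence. It comes directly from Kummer plus Lichtenbaum duality (Proposition~\ref{duality}): ${}_2H^2(X_K,\mathbb{G}_m)\cong (Pic(X_K)/2)^{\vee}$, so $\dim H^2(X_K)=2\dim(Pic(X_K)/2)=2\dim(Pic(X_k)/2)$ by (4). Your proposed dimension count would require controlling $R^2j_*\mathbb{Z}/2$ at singular points and further differentials, which you have not done.

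Finally, the paper never proves $d_2=0$ directly. It first gets the left-exact sequence $0\to H^1(X_k)\to H^1(X_K)\to H^0(\widetilde{X_k^{red}})$ from purity, and then shows right-exactness by a pure dimension count: $\chi(X_K,2)=0$ (obtained by additivity along the Gysin sequences and $\chi(X_k,2)=\chi(\widetilde{X_k^{red}},2)=0$), so $\dim H^1(X_K)=\dim(Pic(X_K)/2)+\dim H^0(X_k)$, and then (4) and Proposition~\ref{finite field} give $\dim H^1(X_K)=\dim H^1(X_k)+\dim H^0(\widetilde{X_k^{red}})$. Thus the surjectivity in (2) is forced. In short: your plan leaves open exactly the steps that the paper avoids by proving (3) via duality and (4) via separatedness of the Picard scheme, and then deducing (2) by counting dimensions rather than analyzing $d_2$ or the local structure of $Rj_*$ at bad points.
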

\begin{proof}
The map \(Pic(X)\to Pic(X_K)\) and \(Pic(X_K)\xrightarrow{deg}\mathbb{Z}\) are surjective so the map \(Pic^0(X)\to Pic^0(X_K)\) is surjective. The \(Pic^0_{X/O_K}\) is separated by \cite[Corollary 3, 9.4]{BLR} so we have \(Pic^0(X)=Pic^0(X_K)\), whence \(Pic(X)/2=Pic(X_K)/2\). 

By deformation theory every line bundle on \(X_k\) can be lifted to a line bundle on the formal scheme \(\widehat{X_{X_k}}\), which comes from a line bundle on \(X\) by Grothendieck existence theorem. So the map \(Pic(X)/2\to Pic(X_k)/2\) is surjective. But
\[Pic(X)/2\subseteq H^2(X)=H^2(X_k)=Pic(X_k)/2\]
by Proposition \ref{finite field} and Kummer sequence, which gives \(Pic(X)/2=Pic(X_k)/2\). Hence (4) is proved.

Denote by \(sing\) (resp. \(reg\)) the singular (resp. smooth) locus of \(X_k\), by \(X'=X\setminus sing\) and by \(p:\widetilde{X_k^{red}}\to X_k\) the normalization map. We have two Gysin sequences by absolute purity (\cite{T})
\[H^{i-4}(sing)\to H^i(X)\to H^i(X')\to H^{i-3}(sing)\]
\[H^{i-2}(reg)\to H^i(X')\to H^i(X_K)\to H^{i-1}(reg)\]
where \(H^*(X)=H^*(X_k)\) by proper base change. Then we have
\[H^0(X)=H^0(X')=H^0(X_K)\]
which gives (1).

By Kummer sequence we have
\[0\to Pic(X_K)/2\to H^2(X_K)\to{_2}H^2(X_K,\mathbb{G}_m)\to 0.\]
By the proof of Proposition \ref{duality} we have \({_2}H^2(X_K,\mathbb{G}_m)=(Pic(X_K)/2)^{\vee}\) so
\[dim(H^2(X_K))=2dim(Pic(X_K)/2)=2dim(Pic(X_k)/2),\]
proving (3). By additivity we have
\[\chi(X_K,2)=2dim(H^0(X_K))-2dim(H^1(X_K))+dim(H^2(X_K))\]
\[\chi(X_K,2)+\chi(reg,2)=\chi(X',2)\]
\[\begin{array}{cc}\chi(reg,2)+\chi(p^{-1}(sing),2)=\chi(\widetilde{X_k^{red}},2)&\chi(X',2)+\chi(sing,2)=\chi(X,2)=\chi(X_k,2)\end{array}\]
\[\begin{array}{cc}\chi(sing,2)=\chi(p^{-1}(sing),2)=0&\chi(X_k,2)=\chi(\widetilde{X_k^{red}},2)=0\end{array}\]
where the last equality follows from the proof of Proposition \ref{finite field}. This shows that
\[\begin{array}{cc}\chi(X_K,2)=0&dim(H^2(X_K))=2dim(H^1(X_K))-2dim(H^0(X_k))\end{array}.\]
We have an exact sequence by purity
\[0\to H^1(X_k)\to H^1(X_K)\to H^0(reg)=H^0(\widetilde{X_k^{red}})\]
and equalities
\begin{align*}
	&dim(H^1(X_K))\\
=	&dim(Pic(X_K)/2)+dim(H^0(X_k))\\
=	&dim(Pic(X_k)/2)+dim(H^0(X_k))\\
=	&dim(H^1(X_k))+dim(H^0(\widetilde{X_k^{red}}))\\
=	&dim(H^2(X_k))+1
\end{align*}
by (4) and Proposition \ref{finite field}. This shows that sequence is right exact, proving (2).
\end{proof}
\begin{remark}\label{residue}
We have exact sequences
\[H^2(K(X_K))\xrightarrow{q_1}\oplus_{x\in X_K^{(1)}}H^1(x)\to H^{3,2}_M(X_K)\to 0\]
\[\oplus_{y\in X_k^{(0)}}H^1(k(y))\xrightarrow{r_2}\oplus_{x\in X_k^{(1)}=X^{(2)}}H^0(k(x))\to CH_0(X_k)/2\to0\]
by \cite[Proposition 4.5]{Y1}. The composite of differentials of the Rost complex of \(X\)
\[H^2(K(X_K))\xrightarrow{q_1,q_2}\oplus_{x\in X_K^{(1)}}H^1(x)\oplus\oplus_{y\in X_k^{(0)}}H^1(k(y))\xrightarrow{r_1+r_2}\oplus_{x\in X^{(2)}}H^0(k(x))\]
is zero so we obtain a map \(\partial:H^{3,2}_M(X_K)\to CH_0(X_k)/2\) which is surjective by \cite[Corollary 1.38,\S10]{Liu}.
\end{remark}
\begin{proposition}\label{unramified1}
\begin{enumerate}
\item We have
\[S(X_k)=\mathcal{H}^3(X_K).\]
Moreover, there is an exact sequence
\[0\to H^0(\widetilde{X_k^{red}})^{\vee}\to H^{3,2}_M(X_K)\to D(X_k)\to0.\]
\item If \(X_k\) is reduced, we have \(D(X_k)=CH_0(X_k)/2\).
\end{enumerate}
\end{proposition}
\begin{proof}
\begin{enumerate}
\item The first equality follows from \cite[Lemma 2.4]{K} and Proposition \ref{unramified}. For the last statement, we have a commutative diagram with exact rows (using \cite[Proposition 1.36,\S 10]{Liu} and \(H^3(X_K)=H^1(X_K)^{\vee}\))
\[
	\xymatrix
	{
		0\ar[r]	&L\ar[r]\ar[d]_h						&H^{3,2}_M(X_K)\ar[r]\ar[d]_i	&D(X_k)\ar[r]\ar@{^{(}->}[d]_j		&0\\
		0\ar[r]	&H^0(\widetilde{X_k^{red}})^{\vee}\ar[r]	&H^3(X_K)\ar[r]											&H^1(X_k)^{\vee}\ar[r]	&0\\
	}
\]
We have \(Coker(i)=Coker(j)=\mathcal{H}^3(X_K)\) by results above so \(h\) is an isomorphism by Snake Lemma.
\item For every irreducible component \(Y\) of \(X_k\), pick a smooth closed point \(y\in Y\) by generic smoothness of \(X_k\). Then there is an open neighbourhood \(y\in U\) such that the composite \(U\to X\to Spec(O_K)\) is smooth. Choose \(R\) to be the CDVR of the unramified extension of \(K\) with \(k(R)=k(y)\). By smoothness and Hensel lemma we obtain a lift \(Spec(R)\to U\) such that the composite \(Spec(R)\to U\to Spec(O_K)\) is induced by \(O_K\subseteq R\). The \(R\) satisfies \(H^1(R)=(R^{\times})/2\). By \cite[Corollary, \S 2, Chapter V]{Ser1}, the norm map
\[N:H^1(R)\to H^1(O_K)\]
is surjective (hence isomorphism) since \(K(R)/K\) is unramified. Denote by \(x\in X\) the image of the generic point of \(R\) and by \(R_x\) the DVR representing the closure of \(x\). The \(N\) factors through the norm \(H^1(R_x)\to H^1(O_K)\) hence the latter is an isomorphism. Hence the push forward \(H^1(R_x)\to H^3(X)\) is injective. So there is a commutative diagram of exact rows
\[
	\xymatrix
	{
		0\ar[r]& H^1(R_x)\ar[r]\ar[d]	&H^3(X)\ar[r]\ar[d]_{\cong}		&H^3(X\setminus Spec(R_x))\ar[d]\\
		0\ar[r]& H^1(k(y))\ar[r]			&H^3(\widetilde{X_k^{red}})\ar[r]	&H^3(\widetilde{X_k^{red}}\setminus y)\\
	},
\]
which shows that the map \(\oplus_x H^1(R_x)\to H^3(X)\) is surjective. This is because \(H^3(\widetilde{X_k^{red}})\) is freely generated by the pushforward of \(H^1(y)\) for every component \(Y\) and some closed point \(y\in Y\).

By Proposition \ref{etale}, (2) and Propostion \ref{duality}, we have a commutative diagram with exact rows
\[
	\xymatrix
	{
		0\ar[r]	&L'\ar[r]\ar[d]						&H^{3,2}_M(X_K)\ar[r]^{\partial}\ar[d]_a	&CH_0(X_k)/2\ar[r]\ar[d]_b	&0\\
		0\ar[r]	&H^0(\widetilde{X_k^{red}})^{\vee}\ar[r] 	&H^3(X_K)\ar[r]						&H^1(X_k)^{\vee}\ar[r]			&0
	}.
\]
The \(L'\subseteq L=H^3(X)\) since \(D(X_k)\) is a quotient of \(CH_0(X_k)/2\). By discussion above we see that
\[H^3(X)\subseteq H^{3,2}_M(X_K).\]
Now take a point \(x\in X_K^{(1)}\) as above. There is a commutative diagram
\[
	\xymatrix
	{
		H^1(R_x)\ar[r]\ar[d]	&H^1(K(R_x))\ar[r]\ar[d]				&H^0(k(R_x))\ar[d]\\
		H^3(X)\ar[r]				&H^{3,2}_M(X_K)\ar[r]	&CH_0(X_k)/2
	}
\]
where the first row is a complex by purity. Since \(H^3(X)\) is generated by pushforwards of \(H^1(R_x)\), the second row is also a complex. Hence \(L'=H^3(X)\) and \(D(X_k)=CH_0(X_k)/2\).
\end{enumerate}
\end{proof}
By degree reason, the \(Sq^2\) (resp. \(\beta^2\)) on \(H^{p,q}_M(X_K)\) (resp. \(E(W)^{p,q}_2(X_K)\)) vanishes if \(p>2\) (resp. \(p>1\)) and the \(\beta^r=0\) if \(r\geq3\). If \(p<q\), we have \(H^{p,q}_M(X_K)=Im(\tau)\) so \(Sq^2\tau=\tau Sq^2+\rho\tau Sq^1\) and \(E_2(W)^{p,q}(X_K)=0\). If \(p>q, p\leq 2\), the \(Sq^2=\beta^2=0\) trivially. So to compute \(Sq^2\) (resp. \(\beta^2\)), it suffices to consider when \(p=q=2\) (resp. \(p=q=1\)).

\begin{proposition}\label{sq}
In the context above, the followings are equivalent:
\begin{enumerate}
\item The \(Sq^2:H^{2,2}_M(X_K)\to H^{4,3}_M(X_K)=\mathbb{Z}/2\) vanishes;
\item \(\Theta(X_K)\neq\emptyset\);
\item \(\Theta(X_k)\neq\emptyset\).
\end{enumerate}
\end{proposition}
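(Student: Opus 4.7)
My plan is to prove $(2)\Leftrightarrow(3)$ by geometric transport along the model, and $(1)\Leftrightarrow(2)$ by a motivic computation. Throughout I use the elementary translation that for a Gorenstein projective curve $Y$ over a field with a rational point, $\Theta(Y)\neq\emptyset$ iff $[\omega_Y]=0$ in $Pic(Y)/2$.

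For $(2)\Leftrightarrow(3)$: on the regular proper flat model $X/O_K$, the relative dualizing line bundle $\omega_{X/O_K}\in Pic(X)$ restricts to $\omega_{X_K}$ and $\omega_{X_k}$ on the two fibers. Combined with $Pic(X_K)/2=Pic(X)/2=Pic(X_k)/2$ from Proposition \ref{etale}(4), this transports $[\omega_{X_K}]$ to $[\omega_{X_k}]$, so one vanishes iff the other does.

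For $(1)\Leftrightarrow(2)$: identify $H^{2,2}_M(X_K,\mathbb{Z}/2)=H^2(X_K)$ (since $p\leq q$) and $H^{4,3}_M(X_K,\mathbb{Z}/2)=H^4(X_K)=\mathbb{Z}/2$ (the former from the exact sequence in \S\ref{bockstein} with $\mathcal{H}^4(X_K)=0$ by Bloch-Ogus since $\dim X_K=1$ and $cd_2(K)=2$; the latter by Proposition \ref{duality}). Filter $H^{2,2}_M(X_K)$ by the Kummer inclusion $\tau\cdot Pic(X_K)/2\hookrightarrow H^{2,2}_M$ with quotient ${_2}Br(X_K)$. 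The operator identity $Sq^2\tau=\tau Sq^2+\rho\tau Sq^1$ from \S\ref{bockstein}, together with $Sq^2([L])=[L]^2\in CH^2(X_K)/2=0$ and $Sq^1([L])\in H^{3,1}_M=0$, shows that $Sq^2$ annihilates the Picard subspace. Thus $Sq^2$ descends to a linear functional $\overline{Sq^2}:{_2}Br(X_K)\to\mathbb{Z}/2$, and the claim becomes that $\overline{Sq^2}$ vanishes iff $[\omega_{X_K}]=0$ in $Pic(X_K)/2$.

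The main obstacle is identifying $\overline{Sq^2}$ with pairing against $[\omega_{X_K}]$. My plan is to appeal to a motivic Wu formula: there is a class $v\in H^{2,1}_M(X_K)=Pic(X_K)/2$ such that $Sq^2(y)=y\cup v$ for $y\in H^{2,2}_M(X_K)$, and $v=c_1(\omega_{X_K})\bmod 2$. Granted this, vanishing of $\overline{Sq^2}$ is equivalent to $c_1(\omega_{X_K})$ lying in the annihilator of ${_2}Br(X_K)$, which by Brauer-Lichtenbaum duality between $Pic/2$ and ${_2}Br$ for curves over nondyadic local fields is equivalent to $c_1(\omega_{X_K})=0$ in $Pic(X_K)/2$. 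The hard step is therefore the Wu identification $v=c_1(\omega_{X_K})\bmod 2$; I would establish it via compatibility of motivic Steenrod operations with étale realization and functoriality in the smooth base change, reducing on the geometric fiber $X_{\bar K}$ to the classical Wu formula $v_1=c_1(\omega)\bmod 2$ and then descending via the Hochschild-Serre spectral sequence for $X_K\to\mathrm{Spec}(K)$, or alternatively by direct computation of $Sq^2$ on a Brauer-class generator in Hochschild-Serre and matching with the Brauer-Lichtenbaum pairing.
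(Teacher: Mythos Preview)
Your argument for $(2)\Leftrightarrow(3)$ is correct and is exactly the paper's: the relative dualizing sheaf $\omega_{X/O_K}$ restricts to $\omega_{X_K}$ and $\omega_{X_k}$, and one transports the class via $Pic(X_K)/2=Pic(X)/2=Pic(X_k)/2$.

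For $(1)\Leftrightarrow(2)$ you have correctly isolated the key point---a Wu-type identity $Sq^2(y)=y\cup c_1(\omega_{X_K})$ on $H^{2,2}_M(X_K)$---but your plan to establish it has a gap. Reducing to the geometric fiber $X_{\bar K}$ cannot detect this class: $H^4(X_{\bar K},\mathbb{Z}/2)=0$, so over $\bar K$ both $Sq^2$ and cup with $\omega$ are identically zero and no comparison is possible. The formula you cite, ``$v_1=c_1(\omega)\bmod 2$'', is also the wrong Wu class ($v_1$ governs $Sq^1$, not $Sq^2$, and $c_1(\omega)$ lives in degree $2$, not $1$), so there is no classical statement on $X_{\bar K}$ to descend from. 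The Hochschild--Serre alternative you mention could perhaps be made to work, but as stated it is vague; Steenrod operations do not interact with the Leray filtration in any simple way.

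The paper avoids all of this with a one-line argument: the Riemann--Roch compatibility for motivic $Sq^2$ along the proper map $\pi:X_K\to\operatorname{Spec}K$ reads
\[
\pi_*\circ\bigl(Sq^2+c_1(\omega_{X_K/K})\cdot\bigr)=Sq^2\circ\pi_*.
\]
Since $Sq^2$ vanishes on $H^{*,*}_M(K,\mathbb{Z}/2)$ and $\pi_*:H^{4,3}_M(X_K)\to H^{2,2}_M(K)$ is an isomorphism, this gives $Sq^2=c_1(\omega_{X_K/K})\cdot$ on $H^{2,2}_M(X_K)$ directly; Poincar\'e duality on $H^2(X_K)$ then yields $(1)\Leftrightarrow(2)$. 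Your preliminary reduction to ${_2}Br(X_K)$ via the Kummer filtration is correct but unnecessary once this identity is in hand: perfectness of the cup pairing on all of $H^2(X_K)$ already gives $Sq^2=0\Leftrightarrow[\omega_{X_K}]=0$.
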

\begin{proof}
We have a commutative diagram
\[
	\xymatrix
	{
		X_k\ar[r]^u\ar[d]_{q}	&X\ar[d]_{p}	&X_K\ar[l]_v\ar[d]_{\pi}\\
		Spec(k)\ar[r]				&Spec(O_K)		&Spec(K)\ar[l]
	}.
\]
The \(\Sigma^{\infty}(X_K)_+\) has a strong dual \(Th(\omega_{X_K/K})\wedge S^{-2,-1}\) (\(\mathcal{SH}(K)\)) by \cite{Ay}. The composite
\[S^{0,0}\xrightarrow{\delta}\Sigma^{\infty}(X_K)_+\wedge Th(\omega_{X_K/K})\wedge S^{-2,-1}\xrightarrow{\pi_*}Th(\omega_{X_K/K})\wedge S^{-2,-1}\]
realizes the push-forward \(\pi_*\) on \(H^{*,*}_W\) and \(H^{*,*}_M\), where \(\delta\) is the counit map. It commutes with the boundary map \(H_W\mathbb{Z}/\eta\to H_W\mathbb{Z}/\eta\wedge\mathbb{P}^1\) of the BSS (\S\ref{bockstein}), where differentials are explicitly given. So we obtain a commutative diagram
\[
	\xymatrix
	{
		H^{2,2}_M(X_K)\ar[r]^{Sq^2_{\omega_{X_K/K}}}\ar[d]_{\pi_*}	&H^{4,3}_M(X_K)\ar[d]_{\pi_*}\\
		H^{0,1}_M(K)\ar[r]^{Sq^2}											&H^{2,2}_M(K)
	}
\]
where \(Sq^2_{\omega_{X_K/K}}=Sq^2+\omega_{X_K/K}\). The \(Sq^2=0\) on \(H^{0,1}_M(K)\) and the \(\pi_*\) induces an isomorphism \(H^{4,3}_M(X_K)=H^{2,2}_M(K,\mathbb{Z}/2)\) so we find that
\[Sq^2=\omega_{X_K/K}\]
on \(H^{2,2}_M(X_K)=H^2(X_K)\). The Proposition \ref{duality} gives \((1)\Leftrightarrow(2)\).

The \(X_k\) is a locally complete intersection hence they are Gorenstein. Thus the dualizing sheaf \(\omega_{X/O_K}=p^!O_K[-1]\) is invertible. It satisfies
\[\begin{array}{cc}v^*\omega_{X/O_K}=\omega_{X_K/K}&u^*\omega_{X/O_K}=q^!k[-1]\end{array}\]
by \cite[\S 19, Duality for Schemes]{SP}. So \(\omega_{X_K/K}\) is a square if and only if \(q^!k[-1]\) is a square by Proposition \ref{etale}.
\end{proof}
\begin{proposition}\label{ac}
Suppose \(\{Y_i\}\) are irreducible components of \(X_{\bar{k}}\) with multiplicity \(d_i\). The \(\Theta(X_{\bar{k}})\neq\emptyset\) if and only if the number
\[-Y_i^2=\frac{\sum_{j\neq i}d_jY_j\cdot Y_i}{d_i}\]
is even for all \(i\).
\end{proposition}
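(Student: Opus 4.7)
By adjunction on the regular arithmetic surface $X$, considered geometrically, the special fiber $X_k\bar k$ is Gorenstein (since $X_k$ is an effective Cartier divisor on $X$, cut out by a uniformizer of $O_K$) and $\omega_{X_k\bar k}\cong\omega_{X/O_K}|_{X_k\bar k}$ because the conormal $O(X_k)|_{X_k}$ is trivial. For each reduced geometric component $Y_i$, adjunction on $X$ gives $\deg(\omega_{X_k\bar k}|_{Y_i})=2p_a(Y_i)-2-Y_i^2$, and the principal relation $X_k\cdot Y_i=0$ yields $-Y_i^2=\sum_{j\neq i}d_j(Y_j\cdot Y_i)/d_i$, so the formula in the statement is automatically an integer. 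The \emph{only if} direction is then immediate: a square root of $\omega_{X_k\bar k}$ restricts to a line bundle on $Y_i$ of half the above degree, forcing $2p_a(Y_i)-2-Y_i^2$, and hence $-Y_i^2$, to be even.

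For the \emph{if} direction, assume $-Y_i^2$ is even for every $i$. First I would reduce to producing a theta characteristic on $X_k^{\mathrm{red}}\bar k$: the kernel of $\text{Pic}(X_k\bar k)\to\text{Pic}(X_k^{\mathrm{red}}\bar k)$ is built from sheaves $1+\mathcal{N}$ with $\mathcal{N}$ nilpotent, which are uniquely $2$-divisible since $\text{char}(k)\neq 2$. Next, pick a theta characteristic $\xi$ on the smooth geometric generic fiber $X_{\bar K}$, which exists because $\bar K$ is algebraically closed of characteristic $\neq 2$. After a suitable (formal or unramified) base extension, lift $\xi$ to a line bundle $L$ on the arithmetic surface using the surjectivity of $\text{Pic}(X)\to\text{Pic}(X_K)$ in Proposition \ref{etale}; then $L^{\otimes 2}\otimes\omega_{X/O_K}^{-1}$ is trivial on the generic fiber, hence equals $O(\sum_i a_iY_i)$ for some integers $a_i$, well-defined modulo $\mathbb{Z}\cdot(d_i)$. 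Taking degrees on each $Y_j$ and using that $2p_a(Y_j)-2-Y_j^2$ is even gives $\sum_i a_i(Y_i\cdot Y_j)\equiv 0\pmod 2$ for every $j$, so $(a_i)\bmod 2$ lies in the mod-$2$ kernel of the intersection matrix.

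The main obstacle is the ensuing mod-$2$ linear-algebra step: exploiting the freedom to twist $L$ by $O(\sum b_iY_i)$, which shifts $(a_i)\mapsto(a_i)-(2b_i)$, together with the principal class $(d_i)$, to arrange $(a_i)\equiv n(d_i)\pmod 2$ for some integer $n$. Once this is achieved, $\sum a_iY_i-nX_k$ has only even coefficients, so $O(\sum a_iY_i)|_{X_k^{\mathrm{red}}\bar k}$ is trivial and $L|_{X_k^{\mathrm{red}}\bar k}$ is the required square root of $\omega_{X_k^{\mathrm{red}}\bar k}$. Verifying that the hypothesis "$Y_i^2$ even for all $i$" forces the mod-$2$ kernel of the intersection matrix $M$ to coincide with $\langle(d_i\bmod 2)\rangle+2\mathbb{Z}^I$ should follow from Zariski's lemma, which identifies $\ker_{\mathbb{Q}}M$ with $\mathbb{Q}\cdot(d_i)$, combined with the even-diagonal assumption.
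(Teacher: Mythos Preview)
Your \emph{only if} direction is fine and matches the paper: restrict a square root to each $Y_i$ and compare degrees.

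For the \emph{if} direction you have taken a far more elaborate route than the paper, and the route has a real gap at exactly the point you flag as the ``main obstacle.'' The claim that the mod-$2$ kernel of the intersection matrix equals $\langle(d_i\bmod 2)\rangle$ does \emph{not} follow from Zariski's lemma plus even diagonal. Already for Kodaira type $I_2$ (two rational curves with $d_1=d_2=1$ and intersection matrix $\left(\begin{smallmatrix}-2&2\\2&-2\end{smallmatrix}\right)$) the matrix vanishes mod~$2$, so its mod-$2$ kernel is all of $(\mathbb{Z}/2)^2$, strictly larger than $\{(0,0),(1,1)\}$. Thus an \emph{arbitrary} lift $L$ of an \emph{arbitrary} theta characteristic on $X_{\bar K}$ need not produce $(a_i)$ in $\langle(d_i)\rangle+2\mathbb{Z}^I$; you would have to exploit the additional freedom of varying $\xi$ among the $2^{2g}$ theta characteristics, and that is a different (and unwritten) argument.

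The paper sidesteps all of this with a single observation: over $\bar k$ one has
\[
\mathrm{Pic}(X_k\bar k)/2\;\cong\;\bigoplus_i \mathrm{Pic}(Y_i)/2\;\cong\;\bigoplus_i \mathbb{Z}/2,
\]
the first isomorphism because $H^2_{\textrm{\'et}}(-,\mathbb{Z}/2)=\mathrm{Pic}/2$ for proper curves over $\bar k$ and $H^2$ splits over the components as in Proposition~\ref{finite field}, the second because $\mathrm{Pic}^0(Y_i)$ is a connected commutative algebraic group over $\bar k$, hence $2$-divisible in $\mathrm{char}\neq 2$, so $\mathrm{Pic}(Y_i)/2$ is detected by degree. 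Thus $\omega_{X_k\bar k}$ is a square in $\mathrm{Pic}$ if and only if $\deg(\omega_{X_k\bar k}|_{Y_i})\equiv 0\pmod 2$ for every $i$, and adjunction $\omega_{X_k\bar k}|_{Y_i}\cong\omega_{Y_i}\otimes O_{X'}(-Y_i)|_{Y_i}$ gives this degree as $2p_a(Y_i)-2-Y_i^2\equiv -Y_i^2\pmod 2$. Both directions then fall out at once, with no lifting to the generic fiber and no intersection-matrix combinatorics.
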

\begin{proof}
There is an isomorphism
\[Pic(X_{\bar{k}})/2\cong\oplus_iPic(Y_i)/2\]
by the same way as in Proposition \ref{finite} since \(Pic/2=H^2\) over \(\bar{k}\). Set \(X'=X\times_{O_K}O_{K^{ur}}\) where \(K^{ur}\) is the maximal unramified extension of \(K\). By \cite[\S 19, Duality for Schemes]{SP}, we have equalities (\(Y_i\) are Gorenstein)
\[\omega_{X_{\bar{k}}}|_{Y_i}=\omega_{X'/O_{K^{ur}}}|_{Y_i}=\omega_{Y_i/\bar{k}}\otimes O_{X'}(-Y_i)|_{Y_i}.\]
We have \(Pic(Y_i/\bar{k})/2=\mathbb{Z}/2\) by counting degrees and
\[deg(\omega_{Y_i/\bar{k}}\otimes O_{X'}(-Y_i)|_{Y_i})\equiv -Y_i^2=\frac{\sum_{j\neq i}d_jY_j\cdot Y_i}{d_i}(\textrm{mod }2)\]
by \cite[(32), page 72]{Ser}, \cite[Theorem 1.37, \S 9]{Liu} and \cite[Proposition 1.21, \S 9]{Liu}. So the statement follows.
\end{proof}

\begin{lemma}\label{modrho}
Denote by \(H^{*,*}_{M/\rho}(-)\) the modulo \(\rho\) version of \(H^{*,*}_M(-)\). For any curve \(X\in Sm/K\), we have
\[H^{p,q}_{M/\rho}(X)=\begin{cases}H^p(X\sqrt{-1})&p<q\\H^{*,*}_M(-)/\rho&p\geq q+1\end{cases}.\]
If \(p=q\) there is an inclusion \(H^{p,q}_{M/\rho}(X)\subseteq H^p(X\sqrt{-1})\) with cokernel \(M\), satisfying an exact sequence
\[0\to M\to\mathcal{H}^p(X)\xrightarrow{\rho}\mathcal{H}^{p+1}(X).\]
\end{lemma}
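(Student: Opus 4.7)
The plan is to derive everything from the cofiber triangle
\[\Sigma^{-1,-1}H_{\mu}\mathbb{Z}/2\xrightarrow{\rho}H_{\mu}\mathbb{Z}/2\to H_{\mu}\mathbb{Z}/2/\rho\]
in $\mathcal{SH}(K)$. Applying $\mathrm{Hom}_{\mathcal{SH}(K)}(\Sigma^{\infty}X_+,-\wedge S^{p,q})$ yields, for every $X\in Sm/K$ and bidegree $(p,q)$, the five-term exact sequence
\[H^{p-1,q-1}_M(X,\mathbb{Z}/2)\xrightarrow{\rho}H^{p,q}_M(X,\mathbb{Z}/2)\to H^{p,q}_{M/\rho}(X)\to H^{p,q-1}_M(X,\mathbb{Z}/2)\xrightarrow{\rho}H^{p+1,q}_M(X,\mathbb{Z}/2).\]
Into this I would feed the curve formulas recalled in \S\ref{bockstein} --- $H^{a,b}_M=0$ for $a>b+1$, $H^{a,b}_M=H^a(X)\tau^{b-a}$ for $a\leq b$, and the exceptional extension $0\to H^{b+1,b}_M\to H^{b+1}(X)\to\mathcal{H}^{b+1}(X)\to0$ --- and then treat the three regions $p\geq q+1$, $p<q$ and $p=q$ separately.

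When $p\geq q+1$ we have $p>(q-1)+1$, so $H^{p,q-1}_M(X,\mathbb{Z}/2)$ vanishes and the sequence collapses to the desired cokernel $H^{p,q}_M(X,\mathbb{Z}/2)/\rho\cdot H^{p-1,q-1}_M(X,\mathbb{Z}/2)$. When $p<q$ all four outer terms become ordinary étale cohomology, and I would compare with the Hochschild--Serre / Gysin sequence of the étale degree-two cover $\pi\colon X\sqrt{-1}\to X$ classified by $\rho$ (the split cover when $\sqrt{-1}\in K$, in which case $\rho=0$ and both sequences degenerate compatibly):
\[H^{p-1}(X)\xrightarrow{\rho}H^p(X)\xrightarrow{\pi^*}H^p(X\sqrt{-1})\xrightarrow{\pi_*}H^p(X)\xrightarrow{\rho}H^{p+1}(X).\]
The natural motivic-to-étale realisation induces a commuting ladder whose four outer columns are identities, so the five lemma forces $H^{p,q}_{M/\rho}(X)\cong H^p(X\sqrt{-1})$.

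The case $p=q$ mixes both phenomena. The middle two terms of the motivic LES are now the proper subgroups $H^{p,p-1}_M\hookrightarrow H^p(X)$ and $H^{p+1,p}_M\hookrightarrow H^{p+1}(X)$, with cokernels $\mathcal{H}^p(X)$ and $\mathcal{H}^{p+1}(X)$. Set $K_M:=\ker(\rho\colon H^{p,p-1}_M\to H^{p+1,p}_M)$ and $K:=\ker(\rho\colon H^p(X)\to H^{p+1}(X))$. Both $H^{p,p}_{M/\rho}(X)$ and $H^p(X\sqrt{-1})$ appear as extensions of the common subgroup $H^p(X)/\rho H^{p-1}(X)$ by $K_M$ and $K$ respectively, and the inclusion $K_M\hookrightarrow K$ promotes to an inclusion $H^{p,p}_{M/\rho}(X)\hookrightarrow H^p(X\sqrt{-1})$ with cokernel $K/K_M$. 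I would finish by applying the snake lemma to the two-row diagram whose rows are the defining extensions $0\to H^{p,p-1}_M\to H^p(X)\to\mathcal{H}^p(X)\to0$ and $0\to H^{p+1,p}_M\to H^{p+1}(X)\to\mathcal{H}^{p+1}(X)\to0$ joined by vertical multiplication-by-$\rho$ maps; this exhibits $M:=K/K_M$ as a subgroup of $\ker(\rho\colon\mathcal{H}^p(X)\to\mathcal{H}^{p+1}(X))$, giving the asserted exact sequence $0\to M\to\mathcal{H}^p(X)\xrightarrow{\rho}\mathcal{H}^{p+1}(X)$.

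The only non-formal step is the identification in the case $p<q$: one must check that the motivic cofiber $H_{\mu}\mathbb{Z}/2/\rho$ realises, after the motivic-to-étale comparison, to the cohomology of the double cover $X\sqrt{-1}$. This is an instance of the standard fact that the $\rho$-cofiber in étale $\mathbb{Z}/2$-cohomology computes the pushforward along the quadratic étale extension via the Leray spectral sequence for $\pi$. Once this comparison is in place, everything else is routine diagram chasing with the five and snake lemmas.
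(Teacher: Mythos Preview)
Your proposal is correct and follows essentially the same approach as the paper: both arguments rest on the commutative ladder between the motivic $\rho$-cofiber long exact sequence and the \'etale long exact sequence for the double cover $X\sqrt{-1}\to X$, then invoke the Five Lemma. The paper compresses all three cases into a single terse ``follows from the commutative diagram \ldots\ and the Five Lemma,'' whereas you spell out the three regions separately and supplement the $p=q$ case with a snake-lemma chase to identify the cokernel $M$; this is a fleshing-out of the same argument rather than a different one.
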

\begin{proof}
Follows from the commutative diagram with long exact rows
\[
	\xymatrix
	{
		\cdots\ar[r]&H^{p,q}_M(X)\ar[r]\ar[d]	&H^{p,q}_{M/\rho}(X)\ar[r]\ar[d]	&H^{p,q-1}_M(X)\ar[r]^-{\rho}\ar[d]	&\cdots\\
		\cdots\ar[r]&H^p(X)\ar[r]										&H^p(X\sqrt{-1})\ar[r]					&H^{p}(X)\ar[r]^-{\rho}										&\cdots
	}
\]
and the Five Lemma.
\end{proof}
\begin{lemma}\label{rho}
Suppose \(\sqrt{-1}\notin K\). We have
\[dim(\rho Pic(X_K)/2)=tr(X_k).\]
\end{lemma}
\begin{proof}
By Proposition \ref{etale}, (2) (and its dual), there is a commutative diagram
\[
	\xymatrix
	{
		Pic(X_K)/2\ar@{^{(}->}[r]\ar[d]_{\rho}	&H^2(X_K)\ar[d]_{\rho}\\
		H^3(X_k)\ar@{^{(}->}[r]							&H^3(X_K)
	}.
\]
So \(\rho Pic(X_K)/2=\rho H^2(X_k)=\rho Pic(X_k)/2\). Denote by \(Y_i\) the irreducible components of \(\widetilde{X_k^{red}}\). The map \(deg:Pic(Y_i)/2\to\mathbb{Z}/2\) is surjective if and only if \([O(Y_i):k]\) is odd by Hasse-Weil inequality. Then the statement follows from the commutative diagram
\[
	\xymatrix
	{
		H^2(X_k)\ar[r]^{\rho}\ar@{->>}[d]				&H^3(X_k)\ar[d]_{\cong}\\
		\oplus_iH^2(Y_i)\ar[r]^{\rho}\ar[d]_{Tr}	&\oplus_iH^3(Y_i)\ar[d]_{Tr}\\
		\oplus_iH^0(k)\ar[r]^{\rho}						&\oplus_iH^1(k)
	}
\]
\end{proof}
Denote by \(l(M)\) the length function for any Artinian module \(M\).
\begin{definition}
Define
\[q=dim(Coker(H^{2,2}_M(X_K)\xrightarrow{Sq^2}H^{4,3}_M(X_K)))\]
\[q'=dim(Coker(H^{2,2}_M(X_{K\sqrt{-1}})\xrightarrow{Sq^2}H^{4,3}_M(X_{K\sqrt{-1}}))).\]
\end{definition}
\begin{theorem}\label{h1}
Suppose we have a smooth proper curve \(X_K\) over a nondyadic local field \(K\) with a rational point.
\begin{enumerate}
\item\label{431} We have
\[H^{4,3}_W(X_K)=\mathbb{Z}/2.\]
Moreover, the \(H^{*,*}_W(X_K)\) has up to \(\eta\)-torsions.
\item\label{432}
We have
\[dim(2H^{3,2}_W(X_K))=\begin{cases}q&\sqrt{-1}\notin K\\0&\sqrt{-1}\in K\end{cases},\]
\[l(H^{3,2}_W(X_K))=q+t\]
where \(t=dim(H^0(\widetilde{X_k^{red}}))+dim(H^1(X_k))-dim(S(X_k))=dim(H^{3,2}_M(X_K))\).
\item We have
\[dim(2W^1(X_K))=\begin{cases}q+tr(X_k)&\sqrt{-1}\notin K\\0&\sqrt{-1}\in K\end{cases}\]
\[l(W^1(X_K))=q+t+dim(H^2(X_k)).\]
\end{enumerate}
\end{theorem}
\begin{proof}
\begin{enumerate}
\item By Proposition \ref{upper} we have
\[H^{4,3}_W(X_K)=H^{4,3}_M(X_K),\]
where the latter is a subgroup of \(H^4(X_K)\). 

Any Galois extension \(L/K\) solvable so it is a composite of extension of prime degrees. The \(Tr_{L/K}\) induces an isomorphism on \(K_2^M/2\) when \([L:K]\) is odd or equal to \(2\) by \cite[Theorem 40.3]{EKM}. Hence it's an isomorphism for any finite extension. So for every component \(Y\) of \(X_K\), the push-forward map \(H^{4,3}_M(X_K)\to K_2^M(K)/2\) is surjective. The \(H^{4,3}_M(X_K)\) is a subgroup of \(H^4_{\textrm{\'et}}(X_K,\mathbb{Z}/2)=\mathbb{Z}/2\). So \(H^{4,3}_M(X_K)=\mathbb{Z}/2\). This implies that
\[H^{4,3}_M(X_K)=\mathbb{Z}/2.\]

For the last statement, it suffices to prove \(\beta^2=0\) on \(E_2(W)^{1,1}(X_K)=H^{1,1}_M(X_K)\) when \(s=1\) by \(cd_2(K)=2\), Proposition \ref{upper} and Proposition \ref{finite}. If \(Sq^2\neq 0\), we have \(E_2(W)^{4,3}(X_K)=0\) so \(\beta^2=0\). Otherwise we have \(\omega_{X_K/K}=0\in Pic(X_K)/2\) by Proposition \ref{sq} so there is a commutative diagram
\[
	\xymatrix
	{
		H^{4,3}_W(X_K)\ar[r]^{\eta}\ar[d]_{\pi_*}^{\cong}	&H^{3,2}_W(X_K)\ar[d]_{\pi_*}\\
		I^2(K)\ar@{^(->}[r]^{\eta}										&I(K)
	}
\]
So the upper horizontal map is injective. Hence the vanishing of \(\beta^2\) is equivalent to that of the composite
\[H^{1,1}_M(X_K)\xrightarrow{\partial} H^{3,2}_W(X_K)\xrightarrow{\pi_*}I(K).\]
This gives the claim since \(\partial\circ\pi_*=\pi_*\circ\partial\) and \(\pi_*=0\) on \(H^{1,1}_M\).
\item 
We have
\[dim(H^{3,2}_M(X_K))+dim(\mathcal{H}^3(X_K))=dim(H^0(\widetilde{X_k^{red}}))+dim(H^1(X_k))\]
by Proposition \ref{duality} and Proposition \ref{etale}. Combining with Proposition \ref{unramified}, we get
\[dim(H^{3,2}_M(X_K))=dim(H^0(\widetilde{X_k^{red}}))+dim(H^1(X_k))-dim(S(X_k)).\]
There is an exact sequence
\[H^{2,2}_M(X_K)\xrightarrow{Sq^2}H^{4,3}_W(X_K)\xrightarrow{\eta} H^{3,2}_W(X_K)\to H^{3,2}_M(X_K)\to0\]
and a commutative diagram
\[
	\xymatrix
	{
		H^{3,2}_W(X_K)\ar[r]^{\rho}\ar@{->>}[d]	&H^{4,3}_W(X_K)\ar[d]_{\cong}\\
		H^{3,2}_M(X_K)\ar[r]^{\rho}\ar@{->>}[d]	&H^{4,3}_M(X_K)\ar[d]_{\cong}\\
		H^1(K)\ar@{->>}[r]^{\rho}						&H^2(K)
	},
\]
hence the first horizonal map is surjective. So \(dim(2H^{3,2}_W(X_K))=q\) by \(2=\rho\eta\), whereas \(l(H^{3,2}_W(X_K))=q+t\).
\item By \eqref{key} it suffices to compute \(H^1(X_K,\textbf{W})\). The \(H^1(X_K,\textbf{W})=H^{2,1}_W(X_K)\) follows from the surjectivity of \(\textbf{W}(X_K)\xrightarrow{rk}\mathbb{Z}/2\). We have exact sequences
\[0\to H^{3,2}_W(X_K)\xrightarrow{\eta} H^{2,1}_W(X_K)\to Pic(X_K)/2\to0\]
\[0\to2H^{2,1}_W(X_K)\to H^{3,2}_W(X_K)\to H^{3,2}_{W/\rho}(X_K)\to 0.\]
The \(\eta\) is injective because \(Sq^2=0\) on \(H^{1,1}(X_K,\mathbb{Z}/2)\) and \(Ker(\eta)\cong Im(Sq^2)\) by \(\eta\)-torsion property established in (1).

There is an exact sequence
\[H^{2,2}_{M/\rho}(X_K)\to H^{4,3}_{W/\rho}(X_K)\to H^{3,2}_{W/\rho}(X_K)\to H^{3,2}_{M/\rho}(X_K)\to0.\]
We have \(H^{4,3}_{W/\rho}(X_K)=H^{4,3}_M(X_K)/\rho=0\) by discussion above. This implies that
\begin{align*}
	&dim(2H^{2,1}_W(X_K))\\
=	&l(H^{3,2}_{W}(X_K))-dim(H^{3,2}_{W/\rho}(X_K))\\
=	&q+dim(\rho Pic(X_K)/2).
\end{align*}
Furthermore we have
\begin{align*}
	&l(H^{2,1}_W(X_K))\\
=	&l(H^{3,2}_W(X_K))+dim(Pic(X_K)/2)\\
=	&q+t+dim(Pic(X_K)/2)\\
=	&q+dim(H^0(\widetilde{X_k^{red}}))+dim(H^1(X_k))-dim(S(X_k))+dim(H^2(X_k))\\
=	&q+2dim(H^2(X_k))+1-dim(S(X_k)).
\end{align*}
Hence the statement follows.
\end{enumerate}
\end{proof}
\begin{lemma}\label{rhorho}
We have
\[\rho H^1(X_K)\cap\tau Pic(X_K)/2=\rho H^1(X_k).\]
\end{lemma}
\begin{proof}
There is a commutative diagram with exact rows and columns
\begin{equation}\label{rhoh1}
	\xymatrix
	{
		0\ar[r]	&H^1(X_k)\ar[r]\ar[d]_{\rho}			&H^1(X_K)\ar[r]\ar[d]_{\rho}			&H^0(\widetilde{X_k^{red}})\ar[r]\ar[d]_{\alpha^{\vee}}	&0\\
		0\ar[r]	&H^2(X_k)\ar[r]\ar[d]			&H^2(X_K)\ar[r]\ar[d]			&Pic(X_K)/2^{\vee}\ar[r]								&0\\
		0\ar[r]	&H^2(X_{k\sqrt{-1}})\ar[r]	&H^2(X_{K\sqrt{-1}})			&																	&
	}
\end{equation}
and an exact sequence
\[0\to\frac{\rho H^1(X_K)}{\rho H^1(X_K)\cap\tau Pic(X_K)/2}\to\frac{H^2(X_K)}{\tau Pic(X_K)/2}\to\frac{H^2(X_K)}{\rho H^1(X_K)+\tau Pic(X_K)/2}\to0.\]
By Snake Lemma the diagram \eqref{rhoh1} gives an exact sequence
\begin{equation}\label{alpha}0\to H^2(X_k)/\rho H^1(X_k)\to H^2(X_K)/\rho H^1(X_K)\to Coker(\alpha^{\vee})\to0\end{equation}
We have
\[\frac{H^2(X_K)}{\tau Pic(X_K)/2}=Pic(X_K)/2^{\vee}\]
\[\frac{H^2(X_K)}{\rho H^1(X_K)+\tau Pic(X_K)/2}=Coker(\alpha^{\vee})=Ker(\alpha)^{\vee},\]
hence
\[dim(\frac{\rho H^1(X_K)}{\rho H^1(X_K)\cap\tau Pic(X_K)/2})=dim(Pic(X_K)/2)-dim(Ker(\alpha)).\]
Finally by \eqref{alpha} we have
\begin{align*}
	&dim(\rho H^1(X_K)\cap\tau Pic(X_K)/2)\\
=	&dim(\rho H^1(X_K))-dim(Pic(X_K)/2)+dim(Ker(\alpha))\\
=	&dim(Pic(X_K)/2)+dim(\rho H^1(X_k))-dim(Ker(\alpha))-dim(Pic(X_K)/2)+dim(Ker(\alpha))\\
=	&dim(\rho H^1(X_k)).
\end{align*}
\end{proof}
\begin{lemma}\label{dsq}
Suppose \(q'=1\). Denote by \(d:H^3(X_K)\to H^1(X_k)^{\vee}\) the boundary map obtained by taking dual of the map in Proposition \ref{etale}, (2). The followings are equivalent:
\begin{enumerate}
\item\(D(X_k)\cap\rho^{\vee}H^2(X_k)^{\vee}=D(X_k)\cap d\rho(Ker(Sq^2)^{2,2})\);
\item\(\omega_{X_k}\in G(X_k)\).
\end{enumerate}
\end{lemma}
\begin{proof}
If \(q=1\) the statement is clear. Suppose \(q=0\). The cup product pairing on \(H^2(X_K)\) induces a commutative diagram
\[
	\xymatrix
	{
		H^2(X_K)\ar[r]\ar[d]_{Sq^2}	&H^2(X_k)^{\vee}\ar[ld]^{ev_{\omega_{X_k}}}\\
		\mathbb{Z}/2							&
	}.
\]
The condition shows that the composite
\[H^2(X_{k\sqrt{-1}})^{\vee}\to H^2(X_k)^{\vee}\xrightarrow{ev_{\omega_{X_k}}}\mathbb{Z}/2\]
is zero so we obtain a map
\[\overline{ev_{\omega_{X_k}}}:{\rho}H^2(X_k)^{\vee}\to\mathbb{Z}/2.\]
Denote by \(T(X_k)=Ker(S(X_k)\to S(X_{k\sqrt{-1}}))\). There is a commutative diagram
\[
	\xymatrix
	{
		Ker(\overline{ev_{\omega_{X_k}}})\ar[r]^{i}	&\rho^{\vee} H^2(X_k)^{\vee}\ar[r]^{\varphi}\ar[d]	&T(X_k)\ar[d]\\
																			&H^1(X_k)^{\vee}\ar[r]									&S(X_k)
	}
\]
and maps
\[T(X_k)^{\vee}\xrightarrow{\varphi^{\vee}}\rho H^1(X_k)\xrightarrow{i^{\vee}}\rho H^1(X_k)/\omega_{X_k}\]
where \(Im(\varphi^{\vee})=G(X_k)\) by Proposition \ref{D}. Then the 1) is equivalent to
\[\omega_{X_k}\in G(X_k).\]
\end{proof}
Note that if \(\omega_{X_k}\in G(X_k)\), the \(p^*\omega_{X_k}\) must be a square.
\begin{definition}\label{delta}
Define
\[\delta=\begin{cases}0&q'=0\\0&q'=1, \omega_{X_k}\in G(X_k)\\1&\textrm{else}\end{cases}.\]
\end{definition}
\begin{theorem}\label{h0}
Given a smooth proper curve \(X_K\) over a nondyadic local field \(K\) with a rational point, we have the followings:
\begin{enumerate}
\item The \(4\)-torsion group \(H^{2,2}_W(X_K)\) satisfies
\[l(H^{2,2}_W(X_K))=dim(S(X_k))+2dim(H^2(X_k))-1+q\]
\[dim(2H^{2,2}_W(X_K))=\begin{cases}dim(G(X_k))+\delta&\sqrt{-1}\notin K\\0&\sqrt{-1}\in K\end{cases}.\]
\item The \(4\)-torsion group \(H^{1,1}_W(X_K)\) satisfies
\[l(H^{1,1}_W(X_K))=dim(S(X_k))+2dim(H^2(X_k))+q,\]
\[dim(2H^{1,1}_W(X_K))=\begin{cases}dim(G(X_k))+tr(X_k)+\delta&\sqrt{-1}\notin K\\0&\sqrt{-1}\in K\end{cases}.\]
\item The \(4\)-torsion group \(W(X_K)\) satisfies
\[l(W(X_K))=dim(S(X_k))+2dim(H^2(X_k))+q+1,\]
\[dim(2W(X_K))=\begin{cases}dim(G(X_k))+tr(X_k)+\delta+1&\sqrt{-1}\notin K\\0&\sqrt{-1}\in K\end{cases}.\]
\end{enumerate}
\end{theorem}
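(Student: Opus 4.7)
The plan is to apply the cohomological long exact sequence associated to the $\eta$-triangle $H_W\mathbb{Z}\wedge\mathbb{G}_m\xrightarrow{\eta}H_W\mathbb{Z}\to H_{\mu}\mathbb{Z}/2\oplus H_{\mu}\mathbb{Z}/2[2]$ of \S\ref{SH} to $X_K$ at the diagonal bidegrees $(p,p)$ for $p=2,1,0$, bootstrapping from the computation of $H^{p,q}_W(X_K,\mathbb{Z})$ in bidegrees $(4,3),(3,2),(2,1)$ provided by Theorem~\ref{h1}. For every $(p,q)$ this gives
\[H^{p+1,q+1}_W(X_K,\mathbb{Z})\xrightarrow{\eta}H^{p,q}_W(X_K,\mathbb{Z})\to H^{p,q}_M(X_K,\mathbb{Z}/2)\oplus H^{p+2,q}_M(X_K,\mathbb{Z}/2)\xrightarrow{\partial}H^{p+2,q+1}_W(X_K,\mathbb{Z}),\]
and since $\dim X_K=1$ forces $H^{p,q}_M(X_K,\mathbb{Z}/2)=0$ whenever $p>q+1$, the second motivic summand vanishes in every bidegree of interest; the boundary $\partial$ is then governed by $Sq^2$ via the Bockstein-differential formula of \S\ref{bockstein}.

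For the length formulas, specializing to $(p,q)=(2,2)$ and combining Proposition~\ref{upper} ($H^{3,3}_W=H^3(X_K)$) with Theorem~\ref{h1}(1) ($H^{4,3}_W=\mathbb{Z}/2$) produces
\[0\to\mathcal{H}^3(X_K)\to H^{2,2}_W\to\ker\bigl(Sq^2\colon H^2(X_K)\to\mathbb{Z}/2\bigr)\to0,\]
where $\operatorname{im}(\eta\colon H^3(X_K)\to H^{2,2}_W)$ is identified with $H^3(X_K)/H^{3,2}_M\cong\mathcal{H}^3(X_K)\cong S(X_k)$ via Proposition~\ref{unramified} and the preceding segment of the long exact sequence. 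For $(1,1)$ the boundary $H^1(X_K)\to H^{3,2}_W$ vanishes because $\eta\colon H^{3,2}_W\to H^{2,1}_W$ is injective by Theorem~\ref{h1}(3), yielding $0\to\operatorname{im}\eta\to H^{1,1}_W\to H^1(X_K)\to0$. For $(0,0)$ the vanishing of $H^{\pm 1,0}_M$ and the surjectivity of the rank-mod-$2$ map $\mathbf{W}(X_K)\to\mathbb{Z}/2$ yield $0\to H^{1,1}_W\xrightarrow{\eta}H^{0,0}_W\to\mathbb{Z}/2\to0$. Dimension counts using Proposition~\ref{etale} then give the three length identities.

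For the 2-torsion the case $\sqrt{-1}\in K$ is immediate: $\mathbf{W}(K)=(\mathbb{Z}/2)^4$ forces every $H^{*,*}_W(X_K,\mathbb{Z})$ to be $2$-torsion, so all doubling images vanish. When $\sqrt{-1}\notin K$, doubling lands inside $\operatorname{im}\eta$ because reduction to $H^{p,p}_M$ kills $2$, so each of the three extensions above becomes an honest $\mathbb{Z}/4$-extension whose Bockstein map---sending a class in the cokernel to the doubling of any lift---computes $\dim(2H^{p,p}_W)$. For $H^{2,2}_W$ this Bockstein sends $\ker Sq^2\subseteq H^2(X_K)$ into $\mathcal{H}^3(X_K)\cong S(X_k)$; a geometric identification using the Tate pairing (Proposition~\ref{semisimple}) together with Proposition~\ref{D} and Remark~\ref{G1} shows that its image contains $G(X_k)$, while Lemma~\ref{dsq} governs whether $\omega_{X_k}$ contributes an extra dimension encoded by $\delta$ of Definition~\ref{delta}, giving $\dim(2H^{2,2}_W)=\dim G(X_k)+\delta$. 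Propagating through the $H^{1,1}_W$-extension adds $\dim(\rho H^2(X_k))$ from the $\rho$-action on $Pic(X_K)/2\subseteq H^1(X_K)$; the final extension for $H^{0,0}_W$ is non-split because $\mathbf{W}(K)=(\mathbb{Z}/4)^2$ already contains $\mathbb{Z}/4$-summands lifting the rank-mod-$2$ class, contributing the $+1$.

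The hard part is the explicit identification of the Bockstein image at the $H^{2,2}_W$ step: one must verify both that it contains $G(X_k)$ (by matching lift-and-double with the Tate pairing of Proposition~\ref{semisimple} composed through Proposition~\ref{picard}) and that the only possible additional contribution is $\omega_{X_k}$ itself, present precisely under the arithmetic conditions of Lemma~\ref{dsq}(2). Packaging this as $\delta$ is exactly what Definition~\ref{delta} is designed for, so this step is the crux; the remaining two propagations are bookkeeping.
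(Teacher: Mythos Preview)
Your overall strategy---bootstrap along the diagonal via the $\eta$-long exact sequence and read off lengths from the resulting short exact sequences---matches the paper exactly, and your derivation of the three short exact sequences and hence of the length formulas is essentially the paper's argument (equations \eqref{rho6}, \eqref{rho4}, and the rank sequence).

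The gap is in the $2$-torsion computation. Your description of the $H^{2,2}_W$ Bockstein as something identified ``via the Tate pairing (Proposition~\ref{semisimple}) composed through Proposition~\ref{picard}'' is not how the argument goes and is not enough: Proposition~\ref{semisimple} does not appear in the proof. What the paper actually uses is the concrete formula
\[2H^{2,2}_W(X_K,\mathbb{Z})=\frac{\rho\,\mathrm{Ker}(Sq^2)^{2,2}}{\rho\,\mathrm{Ker}(Sq^2)^{2,2}\cap\tau H^{3,2}_M(X_K,\mathbb{Z}/2)},\]
obtained from \eqref{rho2}, and then a Snake Lemma comparison of the row $0\to H^0(\widetilde{X_k^{red}})^{\vee}\to H^{3,2}_M\to D(X_k)\to0$ against the quotient of the duality row \eqref{rho23} by $\rho\,\mathrm{Ker}(Sq^2)^{2,2}$; the dimension of the relevant kernel is then computed from Proposition~\ref{D} (giving the $G(X_k)$ term) together with a case split on $q'$ and Lemma~\ref{dsq} (giving $\delta$). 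Your sketch does not produce this formula nor the Snake diagram, so the $G(X_k)+\delta$ answer is not actually established.

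More seriously, for parts (2) and (3) you write that one ``propagates'' the $H^{2,2}_W$ answer and picks up $\dim(\rho H^2(X_k))$, respectively $+1$. The paper's proof here is substantially harder and cannot be done by naive bookkeeping: one must introduce the mod-$\rho$ theories $H^{*,*}_{W/\rho}$ and $H^{*,*}_{M/\rho}$ (Lemma~\ref{modrho}), run the $\eta$-sequence modulo $\rho$ (sequence \eqref{22rho}), compare with the ordinary $\eta$-sequence via the two short exact sequences $0\to H^{p,p}_W/\rho\to H^{p,p}_{W/\rho}\to{_\rho}H^{p,p-1}_W\to0$, and compute the delicate intersection $\rho H^1(X_K)\cap\tau\,\mathrm{Pic}(X_K)/2$ via the diagram \eqref{rhoh1} and the sequence \eqref{alpha}. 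Along the way one meets an auxiliary invariant $\delta'$ and must prove $\delta'=\delta$ by an injectivity argument \eqref{inj}; this is precisely what pins down the $\dim(\rho H^2(X_k))$ contribution in (2) and the $+1$ in (3). None of this mod-$\rho$ machinery appears in your plan, and without it the propagation step does not go through. Your alternative argument for the $+1$ via $\mathbf{W}(K)=(\mathbb{Z}/4)^2$ is suggestive but does not by itself show the extension $0\to H^{1,1}_W\to H^{0,0}_W\to\mathbb{Z}/2\to0$ raises $\dim(2\cdot-)$ by exactly one.
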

\begin{proof}
\begin{enumerate}
\item By Proposition \ref{upper}, we have that \(H^{p,p}_W(X_K)=H^p(X_K)\) if \(p\geq 3\). There is an exact sequence
\begin{equation}\label{rho2}E_1^{1,2}(W)(X_K)\to H^{3,3}_W(X_K)\to H^{2,2}_W(X_K)\to H^{2,2}_M(X_K)\xrightarrow{Sq^2} H^{4,3}_W(X_K).\end{equation}
The first map is written as
\[H^{1,1}_M(X_K)\oplus H^{3,2}_M(X_K)\xrightarrow{(\tau\rho Sq^1,\tau)}H^3(X_K)\]
by \cite[Lemma 9.9]{V}. So we obtain an exact sequence
\begin{equation}\label{rho6}0\to\mathcal{H}^3(X_K)\to H^{2,2}_W(X_K)\to Ker(Sq^2)^{2,2}\to0,\end{equation}
giving the first equation.

The sequence \eqref{rho2} shows that
\begin{equation}\label{G}2H^{2,2}_W(X_K)=\frac{\rho Ker(Sq^2)^{2,2}}{\rho Ker(Sq^2)^{2,2}\cap\tau H^{3,2}_M(X_K)}.\end{equation}
There is a commutative diagram with exact rows and columns
\begin{equation}\label{rho23}
	\xymatrix
	{
					&												&H^2(X_{K\sqrt{-1}})\ar[r]\ar[d]	&H^2(X_{k\sqrt{-1}})^{\vee}\ar[r]\ar[d]	&0\\
		0\ar[r]	&Pic(X_k)/2\ar[r]\ar[d]_{\alpha}&H^2(X_K)\ar[r]^{d'}\ar[d]_{\beta}	&H^2(X_k)^{\vee}\ar[r]\ar[d]	&0\\
		0\ar[r]	&H^0(\widetilde{X_k^{red}})^{\vee}\ar[r]	&H^3(X_K)\ar[r]				&H^1(X_k)^{\vee}\ar[r]			&0
	}
\end{equation}
where vertical arrows are given by multiplication by \(\rho\) and 
\[Im(\alpha)=\rho Pic(X_k)/2.\]

We consider the commutative diagram with exact rows by Proposition \ref{unramified}, (1) and the diagram \eqref{rho23}
\[\xymatrixcolsep{1pc}
	\xymatrix
	{
		0\ar[r]	&H^0(\widetilde{X_k^{red}})^{\vee}\ar[r]\ar@{->>}[d]		&H^{3,2}_M(X_K)\ar[r]\ar[d]_v	&D(X_k)\ar[r]\ar[d]_w										&0\\
		0\ar[r] &Coker(\alpha)\ar[r]^-{i}										&H^3(X_K)/\rho(Ker(Sq^2)^{2,2})\ar[r]						&H^1(X_k)^{\vee}/\rho^{\vee}d'(Ker(Sq^2)^{2,2})\ar[r]	&0
	},
\]
where the \(i\) is injective since the \(Coker(\alpha)\to Coker(\beta)\) is injective. So we obtain an exact sequence by Snake Lemma
\[0\to Im(\alpha)\to Ker(v)\to Ker(w)\to 0.\]
By Proposition \ref{D}, we have
\[dim(D(X_k)\cap\rho^{\vee} H^2(X_k)^{\vee})=dim(\rho^{\vee} H^2(X_k)^{\vee})-dim(G(X_k)).\]

If \(q'=0\), pick any \(a\in H^{2,2}_M(X_{K\sqrt{-1}})\) such that \(Sq^2(a)\neq 0\). The \(a'=Tr_{K\sqrt{-1}/K}(a)\) satisfies \(Sq^2(a')\neq 0\) and \(\rho\cdot a'=0\). So \(\rho H^2(X_K)=\rho Ker(Sq^2)^{2,2}\) if \(q'=0\). Combining with Lemma \ref{dsq} when \(q'=1\), we have
\begin{equation}\label{intersection}dim(Ker(w))=dim(\rho^{\vee} H^2(X_k)^{\vee})-dim(G(X_k))-\delta.\end{equation}
We have
\begin{align}
	&dim(2H^{2,2}_W(X_K))\\
=	&dim(\rho H^2(X_K))-dim(Ker(v))\\
=	&dim(\rho H^2(X_K))-dim(Ker(w))-dim(Im(\alpha))\\
=	&dim(\rho H^2(X_K))-dim(\rho^{\vee} H^2(X_k)^{\vee})+dim(G(X_k))-dim(Im(\alpha))+\delta\\
=	&dim(H^3(X_K))-dim(H^0(\widetilde{X_k^{red}})^{\vee})-dim(H^1(X_k)^{\vee})+dim(G(X_k))+\delta\\
=	&dim(G(X_k))+\delta\label{222}
\end{align}
\item The \(Sq^2\) vanishes on \(H^{1,1}\) and \(H^{*,*}_W\) has up to \(\eta\)-torsion on \(X_K\), so the map \(H^{1,1}_M(X_K)\to H^{3,2}_W(X_K)\) is zero. There is an exact sequence as in \eqref{rho2}
\begin{equation}\label{rho4}0\to Pic(X_K)/2\xrightarrow{\tau}H^{2,2}_W(X_K)\to H^{1,1}_W(X_K)\to H^1(X_K)\to0.\end{equation}
So we have equalities
\[2H^{1,1}_W(X_K)=\frac{\rho H^{1,1}_W(X_K)}{\rho H^{1,1}_W(X_K)\cap\tau Pic(X_K)/2}\]
\begin{align*}
	&l(H^{1,1}_W(X_K))\\
=	&l(H^{2,2}_W(X_K))+dim(H^1(X_K))-dim(Pic(X_K)/2)\\
=	&dim(S(X_k))+dim(Ker(Sq^2)^{2,2})+dim(H^1(X_K))-dim(Pic(X_K)/2)\\
=	&dim(S(X_k))+dim(H^2(X_k))-1+q+dim(H^1(X_k))+dim(H^0(\widetilde{X_k^{red}}))\\
=	&dim(S(X_k))+2dim(H^2(X_k))+q.
\end{align*}

There is a distinguished triangle in \(\mathcal{SH}(K)\)
\[H_W\mathbb{Z}/\rho\wedge\mathbb{G}_m\xrightarrow{\eta}H_W\mathbb{Z}/\rho\to H_{\mu}\mathbb{Z}/2/\rho\oplus H_{\mu}\mathbb{Z}/2/\rho[2],\]
which induces a long exact sequence by Lemma \ref{modrho}
\begin{equation}\label{22rho}H^{1,2}_{M/\rho}(X_K)\oplus H^{3,2}_{M/\rho}(X_K)\xrightarrow{(u,v)} H^{3,3}_{W/\rho}(X_K)\to H^{2,2}_{W/\rho}(X_K)\to H^{2,2}_{M/\rho}(X_K)\to H^{4,3}_{W/\rho}(X_K).\end{equation}

We have commutative diagrams
\[
	\xymatrix
	{
																								&H^{1,2}_{M/\rho}(X_K)\oplus H^{3,2}_{M/\rho}(X_K)\ar[d]_{(u,v)}\ar[r]	&H^{1,1}_M(X_K)\ar[ldd]^0\\
		H^{4,4}_{W/\rho}(X_K)=\mathbb{Z}/2\ar[r]^-{\eta}\ar@{=}[d]	&H^{3,3}_{W/\rho}(X_K)\ar[d]			&\\
		H^{4,3}_W(X_K)\ar[r]^-{\eta'}							&H^{3,2}_W(X_K)												&
	}
\]
\[
	\xymatrix
	{
		H^{3,2}_M(X_K)\ar@{->>}[r]\ar[d]_{\tau}			&H^{3,2}_{M/\rho}(X_K)\ar[d]_v\\
		H^3(X_K)/\rho Ker(Sq^2)^{2,2}\ar@{^{(}->}[r]	&H^{3,3}_{W/\rho}(X_K)
	},
\]
which yields an exact sequence
\[0\to\frac{H^3(X_K)}{Im(u)+\rho Ker(Sq^2)^{2,2}+\tau H^{3,2}_M(X_K)}\to Coker(u,v)\to{_\rho H^{3,2}_W(X_K)}\to 0.\]
Suppose that the first term has dimension
\[dim(S(X_k))-dim(G(X_k))-\delta',\]
where the \(dim(S(X_k))-dim(G(X_k))-\delta+dim(\rho H^2(X_K))-dim(\rho Ker(Sq^2)^{2,2})\) is the dimension of
\[\frac{H^3(X_K)}{\rho Ker(Sq^2)^{2,2}+\tau H^{3,2}(X_K)}\]
by \eqref{intersection}. The
\begin{equation}\label{same}Sq^2:H^{1,2}_M(X_{K\sqrt{-1}})\to H^{3,3}_M(X_{K\sqrt{-1}})\end{equation}
is zero because \(H^{1,2}=\tau H^{1,1}\), \(Sq^2\tau=\tau Sq^2+\rho\tau Sq^1=\tau Sq^2\) and \(Sq^2=0\) on \(H^{1,1}\). So \(Im(u)\subseteq Im(\eta)\subseteq\mathbb{Z}/2\) by Lemma \ref{modrho}. So we have
\begin{equation}\label{ineq}\delta-dim(\rho H^2(X_K))+dim(\rho Ker(Sq^2)^{2,2})\leq\delta'\leq\delta-dim(\rho H^2(X_K))+dim(\rho Ker(Sq^2)^{2,2})+1.\end{equation}

We have an exact sequence
\[0\to H^{3,3}_W(X_K)/\rho\to H^{3,3}_{W/\rho}(X_K)\to {_\rho H^{3,2}_W(X_K)}\to 0\]
where \(H^{3,3}_W(X_K)/\rho=H^3(X_K)/\rho Ker(Sq^2)^{2,2}\). Hence if \(q=q'\), by discussion above we have
\[\rho Ker(Sq^2)^{2,2}=\rho H^2(X_K)\]
so \(H^{3,3}_W(X_K)/\rho\subseteq H^3(X_{K\sqrt{-1}})\). Hence the \(u\) is identified with the zero map in \eqref{same}. Hence \(\delta=\delta'\) if \(q=q'\).

We have \(H^{4,3}_{W/\rho}(X_K)=H^{4,3}_M(X_K)/\rho=0\) by \(tr(X_K)>0\). So the sequence \eqref{22rho} becomes
\[0\to Coker(u,v)\to H^{2,2}_{W/\rho}(X_K)\to H^{2,2}_{M/\rho}(X_K)\to0.\]
Now we also have an exact sequence
\[0\to H^{2,2}_W(X_K)/\rho\to H^{2,2}_{W/\rho}(X_K)\to{_\rho H^{2,1}_W(X_K)}\to0.\]
By Theorem \ref{h1}, we have
\[dim(_\rho H^{2,1}_W(X_K))=dim(_2H^{2,1}_W(X_K))=t+dim(Pic(X_K)/2)-dim(\rho Pic(X_K)/2).\]
Hence we obtain
\begin{align}\footnotesize
	&dim(\rho H^{1,1}_W(X_K))\\
=	&l(H^{2,2}_W(X_K))-dim(H^{2,2}_W(X_K)/\rho)\\
=	&dim(S(X_k))+dim(Ker((Sq^2)^{2,2}))-dim(H^{2,2}_{W/\rho}(X_K))+dim(_\rho H^{2,1}_W(X_K))\label{22rho'}\\
=	&dim(Ker((Sq^2)^{2,2}))+dim(G(X_k))-dim(H^{2,2}_{M/\rho}(X_K))\\
	&-q+1+dim(Pic(X_K)/2)-dim(\rho Pic(X_K)/2)+\delta'\\
=	&dim(Ker((Sq^2)^{2,2}))+dim(G(X_k))-dim(H^2(X_K)/\rho H^1(X_K))-q+1+\delta'\\
=	&dim(G(X_k))+dim(\rho H^1(X_K))+\delta'\label{11rho}
\end{align}
By the same discussion as in \eqref{22rho}, there is an exact sequence
\[H^{0,1}_{M/\rho}(X_K)\oplus H^{2,1}_{M/\rho}(X_K)\xrightarrow{(\xi,\tau)}H^{2,2}_{W/\rho}(X_K)\xrightarrow{\eta} H^{1,1}_{W/\rho}(X_K).\]
We have
\[H^{0,1}_{M/\rho}(X_K)=\mathbb{Z}/2\cdot\tau,H^{2,1}_{M/\rho}(X_K)=Pic(X_K)/2\]
so \(\xi=0\) by \(Sq^2(\tau)=0\). Thus we obtain the exact sequence
\[Pic(X_K)/2\xrightarrow{\tau}H^{2,2}_{W/\rho}(X_K)\xrightarrow{\eta} H^{1,1}_{W/\rho}(X_K).\]
The kernel of the composite
\[Pic(X_K)/2\to H^{2,2}_W(X_K)\to H^{2,2}_{W/\rho}(X_K)\to H^{2,2}_{M/\rho}(X_K)\]
is the same as that of the composite
\[Pic(X_K)/2\to H^2(X_K)\to H^{2,2}_{M/\rho}(X_K).\]
This is just \(\rho H^1(X_K)\cap\tau Pic(X_K)/2\), which equals to \(\rho H^1(X_k)\) by Lemma \ref{rhorho}.

Now we want to study the kernel of the map
\[Pic(X_K)/2=H^{2,1}_{M/\rho}(X_K)\xrightarrow{\tau}H^{2,2}_{W/\rho}(X_K),\]
which is identified with the kernel of 
\[\rho H^{1,1}_W(X_K)\xrightarrow{\eta}2H^{1,1}_W(X_K).\]
There is a commutative diagram
\[\xymatrixcolsep{1pc}
	\xymatrix
	{
		H^{2,1}_{W/\rho}(X_K)\ar[rr]^{\tau'}\ar@{->>}[rd]\ar[dd]&														&H^{2,2}_{W/\rho}(X_K)\ar[dd]\\
																							&H^{2,1}_{M/\rho}(X_K)\ar[ru]^{\tau}	&\\
		H^{2,0}_W(X_K)=0\ar[rr]									&														&H^{2,1}_W(X_K)\\
	}.
\]
So the \(\tau\) factors through \(H^{2,2}_W(X_K,\mathbb{Z})/\rho\subseteq H^{2,2}_{W/\rho}(X_K)\). We have an exact sequence
\[0\to\rho H^{1,1}_W(X_K)/2H^{2,2}_W(X_K)\to H^{2,2}_W(X_K)/2\to H^{2,2}_W(X_K)/\rho\to0.\]
By \eqref{11rho} and \eqref{222} we see that
\[dim(\rho H^{1,1}_W(X_K)/2H^{2,2}_W(X_K))=dim(\rho H^1(X_K))+\delta'-\delta.\]
On the other hand, the composite
\begin{equation}\label{inj}\rho H^{1,1}_W(X_K)/2H^{2,2}_W(X_K)\to H^{2,2}_W(X_K)/2\xrightarrow{\pi}H^2(X_K)\end{equation}
has image \(\rho H^1(X_K)\), which shows that \(\delta'\geq\delta\). If \(q\neq q'\), we have
\[dim(\rho Ker(Sq^2)^{2,2})=dim(\rho H^2(X_K))-1\]
hence by \eqref{ineq} we obtain \(\delta'=\delta\) in any case. Hence the map \eqref{inj} is injective.

Now we have
\[Pic(X_K)/2\cap2H^{2,2}_W(X_K)=0\]
by the \(\eta\)-torsion property of \(H^{*,*}_W\). Hence the \(\pi\) is injective on \(Pic(X_K)/2\cap\rho H^{1,1}_W(X_K)\). So the kernel we wonder is just \(\tau Pic(X_K)/2\cap\rho H^1(X_K)\) by looking at its image under \(\pi\).

The \eqref{alpha} gives the equality
\[dim(\rho H^1(X_K))-dim(\rho H^1(X_k))=dim(Im(\alpha^{\vee})),\]
where \(\alpha^{\vee}\) is the map
\[Pic(X_k)/2\xrightarrow{\rho}H^0(X_k^{red})^{\vee}=H^3(X_k).\]
Then the \eqref{11rho} gives
\[dim(2H^{1,1}_W(X_K))=dim(G(X_k))+dim(\rho H^2(X_k))+\delta.\]
Finally \(dim(\rho H^2(X_k))=tr(X_k)\) was proved in the proof of Lemma \ref{rho}.
\item By \eqref{key} it suffices to compute \(H^0(X_K,\textbf{W})\). There is an exact sequence as before
\begin{equation}\label{++1}0\to H^{1,1}_W(X_K)\to H^{0,0}_W(X_K)\xrightarrow{rk}H^0(X_K)=\mathbb{Z}/2\to 0\end{equation}
so \(2H^{0,0}_W(X_K)=\rho H^{0,0}_W(X_K)\). We have exact sequences
\[0\to H^{1,1}_W(X_K)/\rho\to H^{1,1}_{W/\rho}(X_K)\to{_\rho}H^{1,0}_W(X_K)={_2H^{2,1}_W(X_K)}\to0\]
\[H^{0,1}_{M/\rho}(X_K)\oplus H^{2,1}_{M/\rho}(X_K)\xrightarrow{(0,v)} H^{2,2}_{W/\rho}(X_K)\to H^{1,1}_{W/\rho}(X_K)\to H^{1,1}_{M/\rho}(X_K)\xrightarrow{s} H^{3,2}_{W/\rho}(X_K).\]
We have 
\[\begin{array}{cc}H^{1,1}_{M/\rho}(X_K)=H^{1,1}_M(X_K)/\rho&H^{3,2}_{W/\rho}(X_K)=H^{3,2}_W(X_K)/\rho\end{array}\]
so \(s=0\) by the vanishing of \(H^{1,1}_M(X_K)\to H^{3,2}_W(X_K)\). So we obtain an exact sequence
\[0\to\frac{Pic(X_K)/2}{\rho H^1(X_K)\cap\tau Pic(X_K)/2}\to H^{2,2}_{W/\rho}(X_K)\to H^{1,1}_{W/\rho}(X_K)\to H^1(X_K)/\rho\to0.\]
By \eqref{22rho'} we have
\[dim(H^{2,2}_{W/\rho}(X_K))=dim(S(X_k))+dim(Ker(Sq^2)^{2,2})+dim(_2H^{2,1}_W(X_K))-dim(\rho H^{1,1}_W(X_K)).\]
Hence
\[dim(_2H^{2,1}_W(X_K))-dim(H^{2,2}_{W/\rho}(X_K))=dim(\rho H^{1,1}_W(X_K))-dim(S(X_k))-2dim(H^2(X_k))-q+1.\]
This shows that
\begin{align}
	&dim(2H^{0,0}_W(X_K))\\
=	&l(H^{1,1}_W(X_K))+dim(_2H^{2,1}_W(X_K))-dim(H^{1,1}_{W/\rho}(X_K))\\
=	&l(H^{1,1}_W(X_K))+dim(_2H^{2,1}_W(X_K))-dim(H^{2,2}_{W/\rho}(X_K))\\
	&-dim(H^1(X_K))+1+dim(H^2(X_k))-dim(\rho H^1(X_k))\\
=	&\label{+1}dim(2H^{1,1}_W(X_K))+1.
\end{align}
Finally apply the computation of \(H^{1,1}_W(X_K)\), the \eqref{+1} and \eqref{++1}.
\end{enumerate}
\end{proof}
\begin{remark}
Since \(H^{1,0}_M=H^{0,-1}_M=0\), we have
\begin{align*}
	&\sum_idim(\mathcal{H}^i(X_K))\\
=	&\sum_idim(H^i(X_K))-\sum_iH^{i+1,i}_M(X_K)\\
=	&\sum_{i\leq2}dim(H^i(X_K))+dim(S(X_k))-dim(Pic(X_K)/2)\\
=	&1+dim(H^2(X_k))+1+2dim(H^2(X_k))+dim(S(X_k))-dim(Pic(X_K)/2)\\
=	&2+2dim(H^2(X_k))+dim(S(X_k)).
\end{align*}
Hence we recover the statement in \cite{PSr}, namely
\[l(W(X_K))=\sum_idim(\mathcal{H}^i(X_K))+q-1.\]
\end{remark}
\section{Witt Group of Elliptic Curves}\label{elliptic}
Suppose that \(X_K\) is an elliptic curve with \(\sqrt{-1}\notin K\) and \(char(k)>3\). By \(\omega_{X_K}=O_{X_K}\) and Proposition \ref{sq} we find that \(q=q'=1\) so \(\delta=0\). By classification of reduction types of minimal model in \cite[\S 10.2]{Liu} and \cite[Theorem 8.2, IV]{Sil}, we get by Theorem \ref{h0} the following computation:
\[\textbf{W}(X_K)=\mathbb{Z}/4^{\oplus a}\oplus\mathbb{Z}/2^{\oplus b}\]
\[b=2dim(Pic(X_k)/2)+dim(S(X_k))-2dim(G(X_k))-2tr(X_k)\]
\[a=dim(G(X_k))+tr(X_k)+1.\]
In the case of good reduction, namely type \(I_0\), we have \(S(X_k)=G(X_k)=R=0\). Otherwise the \(\widetilde{X_k^{red}}\) is a disjoint of \(\mathbb{P}^1\)s, hence \(R=0\).

So in any case there is an exact sequence
\[O^{\times}(\widetilde{X_k^{red}})/2\xrightarrow{j}\frac{\oplus_{x'\in p^{-1}((X_k)_{sing})}H^1(x')}{\oplus_{x\in (X_k)_{sing}}H^1(x)}\to Pic(X_k)/2\to Pic(\widetilde{X_k^{red}})/2\to0\]
by Proposition \ref{picard}. Recall that the \(G(X_k)\) now by definition is the subgroup of \(H^1(k,{_2}G)=Coker(j)\) generated by \(x\in p^{-1}((X_k)_{sing})\) with \(\sqrt{-1}\notin k(x)\). Also recall the computation of \(S(X_k)\) from Proposition \ref{unramified}:
\[H^0(\widetilde{X^{red}})\to\frac{\oplus_{x'\in p^{-1}((X_k)_{sing})}H^0(x')}{\oplus_{x\in (X_k)_{sing}}H^0(x)}\to S(X_k)^{\vee}\to0.\]

Finally we have the following tables:
\begin{center}
\begin{tabular}{|c|c|c|c|c|c|c|c|c|c|c|}
\hline
Type						&\(I_0\)&\(I_{1,2}\)	&\(II\)	&\(I_n,n\geq1\)	&\(I_{2n-2,2},I_{2n-1,2}\)	&\(IV\)	&\(IV_2\)	&\(III\)\\
\hline
\(dim(S(X_k))\)		&0			&0					&0			&1						&0										&0			&0				&0\\
\hline
\(dim(G(X_k))\)		&0			&0					&0			&1						&0										&0			&0				&0\\
\hline
\(dim(Pic(X_k)/2)\)&1,2,3	&2					&1			&n+1					&n+1									&3			&2				&2\\
\hline
\(tr(X_k)\)			&1			&1					&1			&n 					&2										&3			&2				&2\\
\hline
\(a\)						&2			&2					&2			&n+2					&3										&4			&3				&3\\
\hline
\(b\)						&0,2,4	&2					&0			&1						&2n-2									&0			&0				&0\\
\hline
\end{tabular}
\begin{tabular}{|c|c|c|c|c|c|c|c|c|c|}
\hline
Type						&\(I_{0,2}^*\)	&\(I_{0,3}^*\)	&\(I_{n-5}^*,n\geq5\)	&\(I_{n-4,2}^*\)&\(IV^*\)	&\(IV_2^*\)	&\(III^*\)	&\(II^*\)\\
\hline
\(dim(S(X_k))\)		&0						&0						&0								&0						&0				&0					&0				&0\\
\hline
\(dim(G(X_k))\)		&0						&0						&0								&0						&0				&0					&0				&0\\
\hline
\(dim(Pic(X_k)/2)\)&4						&3						&n 							&n						&7				&5					&8				&9\\
\hline
\(tr(X_k)\)			&3						&3						&n 							&n-1					&7				&3					&8				&9\\
\hline
\(a\)						&4						&4						&n+1							&n 					&8				&4					&9				&10\\
\hline
\(b\)						&2						&0						&0								&2						&0				&4					&0				&0\\
\hline
\end{tabular}
\end{center}
\section{Theta Characteristics of Hyperelliptic Reductions}\label{hyperelliptic}
If \(X_K\) is hyperelliptic with genus \(\geq2\) the degree \(2\) covering \(r:X_K\to\mathbb{P}^1_K\) satisfies \(\omega_{X_K}=r^*O(g(X_K)-1)\) hence after composing with a \(\mathbb{P}^1\to\mathbb{P}^1\) of degree \(g(X_K)-1\) we have \(r^*O(1)=\omega_{X_K}\).

If \(X_K\) is not hyperelliptic, we may suppose that there is a linear subspace of dimension \(g(X_K)-3\) being disjoint from the image of the canonical embedding \(X_K\to\mathbb{P}^{g(X_K)-1}\) so we get a map \(X_K\to\mathbb{P}^1\) pulling back \(O(1)\) to \(\omega_{X_K}\).

If these maps can be lifted to a regular model, after reduction we get a map \(q:X_k\to\mathbb{P}^1_k\) such that \(q^*O(1)=\omega_{X_k}\).

Suppose that we have maps \(\widetilde{X_k^{red}}\xrightarrow{p}X_k\xrightarrow{q}\mathbb{P}^1\) where \(q\) is finite flat, \(q^*O(1)=\omega_{X_k}\) and that the \(\widetilde{X_k^{red}}\) is a disjoint union of either (geometrically connected) hyperelliptic curves \(X_a\) written as
\[Spec(k[x,y]/(y^2=\epsilon P_1(x)\cdots P_s(x)))\]
where \(\epsilon\in k^{\times}\) and \(\{P_i\}\) are distinct monic irreducible polynomials with degree \(d_i\), or simply \(\mathbb{P}^1_k\). The \(d=\sum_{i=1}^sd_i\) is \(2g+1\) or \(2g+2\) where \(g=g(X_a)\) is the genus. Denote by \(r\) unique covering \((x,y)\mapsto(x:1)\) over \(\mathbb{P}^1\) of degree \(2\) and by \(e=[K(X_a):K(\mathbb{P}^1)]\). Furthermore we define
\[\delta=\begin{cases}\sqrt{\epsilon}&\sqrt{\epsilon}\in k\\\sqrt{-\epsilon}&\sqrt{-1}\in k,\sqrt{\epsilon}\notin k\\a+b\sqrt{-1}&a^2+b^2=\epsilon;a,b\in k;\sqrt{\epsilon},\sqrt{-1}\notin k\end{cases},\]
which is the solution of \(\delta\sigma(\delta)=\epsilon\) on the quadratic extension of \(k\).

\begin{lemma}\label{factor}
\footnote{\url{https://mathoverflow.net/questions/495689/maps-from-a-hyperelliptic-curve-to-mathbbp1}}
If \(e\leq g\) and \(e\) is even, the \((q\circ p)|_{X_a}\) factors through \(r\).
\end{lemma}
\begin{proof}
Set \(f=(q\circ p)|_{X_a}\). Since \(h^0(X_a,f^*O(1))\geq 2\) and \(h^1(X_a,f^*O(1))>1+g-1-e=g-e\geq0\) by Riemann-Roch, the \(r_*f^*O(1)\) is of the form \(O(a)\oplus O(b)\) where \(a\geq 1,b\leq-2\). Hence
\[H^0(X_a,r^*O(b))=H^0(\mathbb{P}^1,r_*r^*O(b))=H^0(\mathbb{P}^1,O(b)\oplus O(b-g-1))=0,\]
\[H^0(\mathbb{P}^1,O(a))=H^0(X_a,f^*O(1))=H^0(\mathbb{P}^1,r_*f^*O(1))\xrightarrow[\cong]{r^*}H^0(X_a,r^*r_*f^*O(1)).\]
The map \(H^0(\mathbb{P}^1,O(1))\xrightarrow{f^*}H^0(X_a,f^*O(1))\) then gives two base point free sections \(u,v\) of \(f^*O(1)\). The last equality shows that they are equal to \(r^*s,r^*t\) respectively, where \(s,t\in H^0(\mathbb{P}^1,r_*f^*O(1))=H^0(\mathbb{P}^1,O(a))\). By faithfully flatness of \(r\) we see that \(s,t\) are base point free over \(\mathbb{P}^1\). Hence we obtain a map \(h:\mathbb{P}^1\to\mathbb{P}^1\) given by \(s,t\), which satisfy \(h\circ r=f\).
\end{proof}
Let us recall the results in \cite[\S2]{C}. Let \(t_{d_1+\cdots+d_{i-1}+1},\cdots,t_{d_1+\cdots+d_{i}}\) be roots of \(P_i\) over \(\bar{k}\), such that the Frobenius action is the permutation
\[(t_{d_1+\cdots+d_{i-1}+1}\cdots t_{d_1+\cdots+d_{i}}).\]
If \(d=2g+1\), the \(_2Pic(\widetilde{X_{\bar{k}}})\) has a basis \(D_i=t_i-\infty,i=1,\cdots,d-1\). Otherwise it has a basis \(D_i=t_i-t_1,i=2,\cdots,d-1\).
\begin{proposition}\label{canonical}
Suppose that \(\lambda\in\mathbb{P}^1_k\) with \(deg(\lambda)=n\) being odd (see Definition \ref{poly} for \(P_{\lambda}\)). For each component \(X_a\) in the context above, we have
\begin{enumerate}
\item If \(d\) is odd, we have
\[r^*\lambda=div(P_{\lambda}(x))+2n\infty.\]
\item If \(d\) is even and \(d_1\) is odd, we have
\[r^*\lambda=div(\frac{P_{\lambda}(x)}{P_1(x)})-2\cdot\frac{n-1}{2}div(P_1(x))+2n(t_1+\cdots+t_{d_1})-2\cdot\frac{d_1-1}{2}(r^*\lambda-div(P_{\lambda}(x))).\]
\item Denote by \(Q(x)=\delta\prod_{i=1}^{g+1}(x-t_{2i-1})\) and by \(d\in k\) some non-square element. Write
\[Q(x)=A(x)+\sqrt{d}B(x)\]
for \(A(x),B(x)\in k[x]\), where the \(B(x)\) has zeros \(\{w_j\}\), being solutions of \(\sigma(Q)(x)=Q(x)\). Define
\[\varphi=2y+Q(x)+\sigma(Q)(x).\]
We have
\[div(\varphi)=\begin{cases}2\sum_j(w_j,-Q(w_j))-(g+1)\infty_{+}+(g+1-2deg(B(x)))\infty_{-}&\sqrt{\epsilon}\in k\\2\sum_j(w_j,-Q(w_j))-(g+1)(\infty_{+}+\infty_{-})&\sqrt{\epsilon}\notin k\end{cases}.\]

If \(d_1,\cdots,d_s,g\) are even, we have
\[r^*\lambda=div(\frac{P_{\lambda}(x)}{\varphi})+2D\]
where
\[D=\sum_{i=1}^{g+1}t_{2i-1}+\frac{n-g-1}{2}(\infty_{+}+\infty_{-})-div(\frac{Q(x)}{Q(x)+y})\]
is defined over \(k\).
\item If \(d_1,\cdots,d_s\) are even and \(g\) is odd, the \(r^*\lambda\) is not a square in \(Pic(X_a)\).
\end{enumerate}
\end{proposition}
\begin{proof}
\begin{enumerate}
\item We have
\[r^*\lambda=r^*(div(P_{\lambda}(x))+n\infty)=div(P_{\lambda}(x))+2n\infty.\]
\item We have
\[d_1r^*\lambda=r^*(div(\frac{P_{\lambda}(x)^{d_1}}{P_1(x)^{n}})+div(P_1(x)^n)+nd_1\infty)=div(\frac{P_{\lambda}(x)^{d_1}}{P_1(x)^{n}})+2n(t_1+\cdots+t_{d_1}).\]
The statement follows easily.
\item We have \(div(y)=\sum_{i=1}^{2g+2}t_i-(g+1)(\infty_{+}+\infty_{-})\), which gives
\begin{align*}
	&\sigma(D_{2g+1})\\
=	&t_{2g+2}-t_2\\
=	&div(y)-t_1-2t_2-\sum_{i=3}^{2g+1}t_i+(g+1)(\infty_{+}+\infty_{-})\\
=	&div(y)-2gt_1-2t_2-\sum_{i=3}^{2g+1}D_i+(g+1)(\infty_{+}+\infty_{-})\\
=	&\sum_{i=3}^{2g+1}D_i-2\sum_{i=3}^{2g+1}D_i+div(y)-2gt_1-2t_2+(g+1)(\infty_{+}+\infty_{-})\\
=	&\sum_{i=3}^{2g+1}D_i-div(\frac{\prod_{i=3}^{2g+1}(x-t_i)}{(x-t_1)^{2g-1}})+div(y)-div((x-t_1)^g(x-t_2))\\
=	&\sum_{i=3}^{2g+1}D_i+div(\frac{y(x-t_1)^{g-1}}{\prod_{i=3}^{2g+1}(x-t_i)}).
\end{align*}
We verify that the \(\sum_{i=2}^{g+1}D_{2i-1}+nt_1\) lives in \(Pic(X_a\times_k\bar{k})^{\sigma}\), whose square is linearly equivalent to \(r^*\lambda\) over \(\bar{k}\). We have
\begin{align*}
	&(\sigma-Id)(\sum_{i=2}^{g+1}D_{2i-1}+nt_1)\\
=	&\sum_{i=2}^gD_{2i}-D_{2i-1}+\sigma(D_{2g+1})-D_{2g+1}+(n-g+1)D_2\\
=	&\sum_{i=2}^gD_{2i}-D_{2i-1}+\sum_{i=3}^{2g}D_i+div(\frac{y(x-t_1)^{g-1}}{\prod_{i=3}^{2g+1}(x-t_i)})+(n-g+1)D_2\\
=	&2\sum_{i=2}^gD_{2i}+(n-g+1)D_2+div(\frac{y(x-t_1)^{g-1}}{\prod_{i=3}^{2g+1}(x-t_i)})\\
=	&div(\frac{\prod_{i=2}^g(x-t_{2i})}{(x-t_1)^{g-1}})+div(\frac{x-t_2}{x-t_1})^{\frac{n-g+1}{2}}+div(\frac{y(x-t_1)^{g-1}}{\prod_{i=3}^{2g+1}(x-t_i)})\\
=	&div(\left(\frac{x-t_2}{x-t_1}\right)^{\frac{n-g-1}{2}}\frac{y}{\delta\prod_{i=1}^{g+1}(x-t_{2i-1})})\\
=	&(\sigma-Id)div((x-t_1)^{\frac{n-g-1}{2}}\frac{\delta\prod_{i=1}^{g+1}(x-t_{2i-1})}{\delta\prod_{i=1}^{g+1}(x-t_{2i-1})+y}),
\end{align*}
which shows that
\[D=\sum_{i=1}^{g+1}t_{2i-1}-div(\frac{\delta\prod_{i=1}^{g+1}(x-t_{2i-1})}{\delta\prod_{i=1}^{g+1}(x-t_{2i-1})+y})+\frac{n-g-1}{2}(\infty_{+}+\infty_{-})\]
is a square root of \(r^*\lambda\) and lives in \(Div(X_a)\). Set \(f=(x-t_1)^{\frac{n-g-1}{2}}\frac{\delta\prod_{i=1}^{g+1}(x-t_{2i-1})}{\delta\prod_{i=1}^{g+1}(x-t_{2i-1})+y}\). Finally we have
\begin{align*}
	&r^*\lambda\\	
=	&div(P_{\lambda}(x))+n(\infty_{+}+\infty_{-}))\\
=	&div(\frac{P_{\lambda}(x)}{(x-t_1)^n})+2(nt_1+\sum_{i=2}^{g+1}D_{2i-1}-div(f))-2\sum_{i=2}^{g+1}D_{2i-1}+2div(f)\\
=	&div(\frac{P_{\lambda}(x)}{(x-t_1)^n})+2D-2\sum_{i=2}^{g+1}D_{2i-1}+2div(f)\\
=	&div(\frac{P_{\lambda}(x)(x-t_1)^{g-n}}{\sum_{i=2}^{g+1}(x-t_{2i-1})})+2D+2div(f)\\
=	&div(\frac{P_{\lambda}(x)}{\delta\prod_{i=1}^{g+1}(x-t_{2i-1})+\sigma(\delta)\prod_{i=1}^{g+1}(x-t_{2i})+2y})+2D.
\end{align*}

Denote by \(Q(x)=\delta\prod_{i=1}^{g+1}(x-t_{2i-1})\). Let us compute \(div(Q(x)+\sigma(Q)(x)+2y)\). Suppose
\[\begin{array}{cc}Q(x)+\sigma(Q)(x)+2y=0&y^2=Q(x)\sigma(Q)(x)\end{array}.\]
We obtain
\begin{equation}\label{Q}(Q(x)-\sigma(Q)(x))^2=0.\end{equation}
Write \(Q(x)=A(x)+\sqrt{d}B(x)\) as in the statement. The \eqref{Q} becomes
\[4dB(x)^2=0.\]
So the \(\varphi=Q(x)+\sigma(Q)(x)+2y\) has order \(2\) at each root of \(B(x)\) and \(y=-Q(x)\) at these roots. Here \(deg(B(x))=g+1\) if \(\sqrt{\epsilon}\notin k\). At infinity we have
\[ord_{\infty_{+}}(\varphi)=-g-1\]
\[ord_{\infty_{-}}(\varphi)=\begin{cases}g+1-2deg(B(x))&\sqrt{\epsilon}\in k\\-g-1&\sqrt{\epsilon}\notin k\end{cases}.\]
Hence
\[div(\varphi)=\begin{cases}2\sum_j(w_j,-Q(w_j))-(g+1)\infty_{+}+(g+1-2deg(B(x)))\infty_{-}&\sqrt{\epsilon}\in k\\2\sum_j(w_j,-Q(w_j))-(g+1)(\infty_{+}+\infty_{-})&\sqrt{\epsilon}\notin k\end{cases}.\]
This concludes the proof.
\item We want to show that the equation
\[(\sigma-Id)\sum_{i=2}^{2g+1}x_iD_i\sim D_2,x_i\in\mathbb{Z}/2\]
has no solution so the square root of \(r^*\lambda\) is not defined over \(k\). We use the explicit Frobenius matrix (more precisely, \(\sigma-Id\)) given in \cite[Lemma 2.4]{C}.

Suppose \(s>1\). The \(P_s(x)\) gives the homogeneous linear equations represented by the matrix
\[\begin{pmatrix}1&&&&1\\1&1&&&1\\&1&1&&1\\&&\vdots&&\\&&1&1&1\\&&&1&0\end{pmatrix}_{(d_s-1)\times(d_s-1)},\]
whose solution is given by \(\footnotesize\begin{pmatrix}\alpha&0&\alpha&\cdots&0&\alpha\end{pmatrix}^T\).

For \(P_j(x)\) where \(2\leq j<s\), we have the homogeneous linear equations represented by the matrix
\[\begin{pmatrix}1&&&&1&0&\cdots&0&1\\1&1&&&&0&\cdots&0&1\\&1&1&&&0&\cdots&0&1\\&&\vdots&&&&\vdots&&\vdots\\&&1&1&&&\vdots&&1\\&&&1&1&0&\cdots&0&1\end{pmatrix}_{d_j\times(d_j+\cdots+d_s-1)},\]
whose solution of the first \(d_j\) variables is given by \(\footnotesize\begin{pmatrix}\beta+\alpha&\beta&\cdots&\beta+\alpha&\beta\end{pmatrix}^T\).

For \(P_1(x)\) we have the inhomogeneous linear equations represented by the matrix
\[\begin{pmatrix}0&1&1&\cdots&1&0&1\\1&1&0&\cdots&0&1&0\\&1&1&\cdots&0&1&0\\&&&\vdots&&\end{pmatrix}_{(d_1-1)\times(d-1)},\]
where the first row gives the equation
\[(\frac{d_1}{2}-1+\frac{d_s}{2}-1+\sum_{j=2}^{s-1}\frac{d_j}{2})\alpha=(g-1)\alpha=1.\]
So if \(g\) is odd there is no solution.

If \(s=1\), the \(P_1(x)\) gives the inhomogeneous linear equations represented by the matrix
\[\begin{pmatrix}0&1&1&\cdots&1&1&0&1\\1&1&0&\cdots&0&0&1&0\\&1&1&\cdots&0&0&1&0\\&&&\vdots&&&\\&&&&1&1&1&0\\&&&&&1&0&0\end{pmatrix}_{(d_1-2)\times(d_1-1)},\]
whose solution is given by \(\footnotesize\begin{pmatrix}0&\alpha&\cdots&0&\alpha\end{pmatrix}^T\). The first row gives the equation
\[(\frac{d-2}{2}-1)\alpha=(g-1)\alpha=1,\]
which has no solution if \(g\) is odd.
\end{enumerate}
\end{proof}
Combining Proposition \ref{square}, \ref{sq}, \ref{canonical} and Remark \ref{factor1} we have:
\begin{coro}\label{algorithm}
In the context above, denote by \(p_a=(q\circ p)|_{X_a}:X_a\to\mathbb{P}^1\). If one of \([K(X_a):K(\mathbb{P}^1)]\) is odd, \(\Theta(\omega_{X_K})=\emptyset\). Otherwise suppose that \([K(X_a):K(\mathbb{P}^1)]\leq g(X_a)\) for every \(a\).
\begin{enumerate}
\item If \(X_a\) is ramified at \(\infty\), set \(f_a=1\).
\item If \(X_a\) is unramified at \(\infty\) and there is an odd degree ramification point with prime polynomial \(P_1\) (namely \(deg(P_1)\) is odd), set \(f_a=P_1\).
\item\label{easy} If \(X_a\) has no odd degree ramifications, set
\[f_a=2y+\delta\prod_{i=1}^{g+1}(x-t_{2i-1})+\sigma(\delta)\prod_{i=1}^{g+1}(x-t_{2i}).\]
if \(g(X_a)\) is even, otherwise \(\Theta(\omega_{X_K})=\emptyset\).
\item If \(Y_b\subseteq\widetilde{X_k^{red}}\) is a component being isomorphic to \(\mathbb{P}^1_k\), it gives a rational function \(\frac{U(x)}{V(x)}\) where \(U,V\in k[x]\) are coprime. Set \(f_b=V\) if every \([K(Y_b):K(\mathbb{P}^1)]\) is even, otherwise \(\Theta(\omega_{X_K})=\emptyset\).
\end{enumerate}
Denote by \(f=(\frac{P_{\lambda}\circ p_a}{f_a},\frac{P_{\lambda}\circ(q\circ p)|_{Y_b}}{f_b})\in K(\widetilde{X_k^{red}})\). Take \(\lambda\in\mathbb{P}^1\) with sufficiently large odd degree so that \(Supp(q^*\lambda)\cap X_{sing}=\emptyset\). Then the class
\[\Lambda(\omega_{X_k})=\overline{\oplus_{p(x)\in (X_k)_{sing}}f(x)}\in H^1(k,{_2}G)/R,\]
is zero if and only \(\Theta(\omega_{X_K})\neq\emptyset\).
\end{coro}
For the computation of \(R\) in our case, one may use the explicit basis of \(_2Pic(X_a)\) given in \cite[Lemma 2.6]{C}.

{}
\end{document}